\newcommand{\lcm}{\mathop{\mathrm{lcm}}}
\newtheorem{thm}{Theorem}[section]
\newtheorem{cor}[thm]{Corollary}
\newtheorem{lem}[thm]{Lemma}
\newtheorem{prop}[thm]{Proposition}
\newtheorem{defn}[thm]{Definition}
\newtheorem{ex}[thm]{Example}
\newtheorem*{thmA}{Theorem A}
\newtheorem*{corC}{Corollary C}
\newtheorem*{thmB}{Theorem B}
\newtheorem*{thmD}{Theorem D}
\newtheorem{proc}[thm]{}
\numberwithin{equation}{section}
\def\pn{\par\noindent}
\begin{document}

\vspace{1.3 cm}
\title{Quotient graphs for power graphs }
\author{D.~Bubboloni, Mohammad A.~Iranmanesh and S.~M.~Shaker }

\thanks{{\scriptsize
\hskip -0.4 true cm MSC(2010): Primary: 05C25; Secondary: 20B30.
\newline Keywords: Quotient graph, Power graph,  Permutation subgroups.\\
}}
\maketitle


\begin{abstract} In a previous paper of the first author a procedure was developed for counting the components of a graph through the knowledge of the components of its quotient graphs. We apply here that procedure to the proper power graph  $\mathcal{P}_0(G)$ of a finite group $G$, finding a formula for the number $c(\mathcal{P}_0(G))$ of its components which is particularly illuminative when $G\leq S_n$  is a fusion controlled permutation group. We make use of the proper quotient power graph  $\widetilde{\mathcal{P}}_0(G)$, the proper order graph $\mathcal{O}_0(G)$ and the proper type graph $\mathcal{T}_0(G)$. We show that all those graphs are quotient of $\mathcal{P}_0(G)$ and
demonstrate a strong link between them dealing with $G=S_n$. We find simultaneously $c(\mathcal{P}_0(S_n))$ as well as the number of components of $\widetilde{\mathcal{P}}_0(S_n)$, $\mathcal{O}_0(S_n)$ and $\mathcal{T}_0(S_n)$.
\end{abstract}

\vskip 0.2 true cm
\pagestyle{myheadings}
\markboth{\rightline {\scriptsize  Bubboloni, Iranmanesh and Shaker}}
         {\leftline{\scriptsize Power graphs }}

\bigskip
\bigskip
\section{\bf Introduction and main results}
\vskip 0.4 true cm

 Kelarev and Quinn~\cite{kq} defined  the directed power graph $\overrightarrow{\mathcal{P}(S)}$ of a semigroup $S$ as the directed graph in which the set of vertices is $S$ and, for $x, y\in S$,  there is an arc  $(x,y)$ if $y=x^m$, for some $m\in\mathbb{N}$. The \emph{power graph} $\mathcal{P}(S)$ of a semigroup $S$, was defined by Chakrabarty, Ghosh and Sen~\cite{cgs} as the corresponding underlying undirected graph. They proved that for a finite group $G$, the power graph $\mathcal{P}(G)$ is complete if and only if $G$ is a cyclic group  of prime power order. In~\cite{c, cg} Cameron and Ghosh obtained interesting results about power graphs of finite groups, studying how the group of the graph automorphisms of $\mathcal{P}(G)$ affects the structure of the group $G$. Mirzargar, Ashrafi and Nadjafi~\cite{man} considered some further graph theoretical properties of the power graph $\mathcal{P}(G)$, such as the clique number, the independence number and the chromatic number and their relation to the group theoretical properties of $G$. Even though young, the theory of power graphs seems to be a very promising research area. The majority of its beautiful results dating before 2013 are collected in the survey \cite{sur}.

 In this paper we deal with the connectivity of $\mathcal{P}(G)$, where $G$ is a group.  All the groups considered in this paper are finite. Since it is obvious that $\mathcal{P}(G)$ is connected of diameter at most $2$, the focus is on  $2$-connectivity. Recall that a graph $X=(V_X,E_X)$  is $2$-connected if, for every $x\in V_X$, the $x$-deleted subgraph of $X$ is connected.
Thus $\mathcal{P}(G)$  is $2$-connected if and only if $\mathcal{P}_0 (G)$,  the $1$-deleted subgraph of $\mathcal{P} (G)$, is connected.  $\mathcal{P}_0 (G)$ is called the {\it proper power graph} of $G$ and our main aim is to find a formula for the number $c_0(G)$ of its components. We denote its vertex set $G\setminus\{1\}$ by $G_0.$ Recently Curtin,  Pourgholi and  Yousefi-Azari  \cite{pya2}  considered  the properties of the diameter of $\mathcal{P}_0 (G)$ and characterized the groups $G$ for which $\mathcal{P}_0 (G)$ is Eulerian. Moghaddamfar, Rahbariyan and Shi \cite{MRS}, found many relations between the group theoretical properties of $G$ and the graph theoretical properties of $\mathcal{P}_0 (G)$.
Here we apply the theory developed in \cite{bub} to get control on number and nature of the components of $P_0 (G)$ through those of some of its quotient graphs.

Throughout the paper, $n$ always indicates a natural number. The symmetric group $S_n$ and the alternating group $A_n$ are interpreted as naturally acting on the set $N=\{1,\dots,n\}$ and with identity element $id$.
 We are going to use notation and  definitions given in \cite{bub} about graphs and homomorphisms. In particular, every graph is finite, undirected, simple and reflexive, so that there is a loop on each vertex. The assumption about loops, which is not common for power graphs, is clearly very mild in treating connectivity and does not affect any result about components.

Up to now, the $2$-connectivity of $\mathcal{P}(G)$ has been studied for nilpotent groups and for some types of simple groups in ~\cite{pya}; for groups admitting a partition and for the symmetric and alternating groups, with a particular interest on the diameter of $\mathcal{P}_0 (G)$, in ~\cite{DF}. In those papers the results are obtained
through ingenious ad hoc arguments,  without developing a general method. The arguments often involve element orders  and,  when $G\leq S_n,$  the cycle decomposition of permutations.  We observed that what makes those ideas work is the existence of some quotient graphs for $\mathcal{P}_0 (G)$. For $\psi\in S_n$, let $T_{\psi}$ denote the type of $\psi$, that is, the partition of $n$ given by the lengths of  the orbits of $\psi$. Then
 there exists a quotient graph $\mathcal{O}_0(G)$ of $\mathcal{P}_0 (G)$ having vertex set $\{o(g): g\in G_0\}$ and, when $G$ is a permutation group, there exists a quotient graph $\mathcal{T}_0(G)$ of $\mathcal{P}_0 (G)$ having vertex set $\mathfrak{T}(G_0)$ given by the types $T_{\psi}$ of  the permutations $\psi\in G_0$ (Sections \ref{order-sec} and \ref{permutation}).

Recall that a homomorphism $f$ from the graph $X$ to the graph $Y$  is called complete if it maps both the vertices and edges of $X$ onto those of $Y$; tame if vertices with the same image are connected; locally surjective if it maps the neighborhood of each vertex  of $X$ onto the neighborhood in $Y$ of its image; orbit if the sets of vertices in $V_X$ sharing the same image coincide with the orbits of a group of automorphisms of $X$.
The starting point of our approach is to consider  the \emph{quotient power graph} $\widetilde{\mathcal{P}}_0(G)$, obtained from $\mathcal{P}_0 (G)$ by the identification of the vertices generating the same cyclic subgroup (Section \ref{onda}). The projection $\pi$ of $\mathcal{P}_0 (G)$ on $\widetilde{\mathcal{P}}_0(G)$ is tame  and thus the number  $\widetilde {c}_0(G)$ of  components of $\widetilde{\mathcal{P}}_0(G)$ is equal to $c_0(G)$. Moreover, both $\mathcal{O}_0(G)$ and $\mathcal{T}_0(G)$ may be seen also as quotients of $\widetilde{\mathcal{P}}_0(G)$, with a main difference between them.
 The projection $\widetilde{o}$ on $\mathcal{O}_0(G)$ is not, in general, locally surjective (Example \ref{no-fusion}) while,  for any $G\leq S_n$, the projection $\widetilde{t}$ on $\mathcal{T}_0(G)$ is complete and locally surjective (Propositions \ref{tau} and \ref{2con}).
As a consequence, while finding $c_0(G)$ through $\mathcal{O}_0(G)$ can be hard,  it is manageable  through $\mathcal{T}_0(G)$.
Call now
 $G\leq S_n$ fusion controlled if,  for every $\psi\in G$ and  $x\in S_n$ such that  $\psi^x\in G$, there exists $y\in N_{S_n}(G)$ such that   $\psi^x=\psi^y.$ Obviously $S_n$ and $A_n$ are both fusion controlled, but they are not the only examples. For instance,  if $n=mr$,  with $m,r\geq 2$, then the base group $G$ of the wreath product $S_m\wr S_r=N_{S_n}(G)$ is fusion controlled.
If $G$ is fusion controlled, then $\widetilde{t}$ is a complete orbit homomorphism (Proposition \ref{conj}) and hence  \cite[Theorem B] {bub} applies to $X=\widetilde{\mathcal{P}}_0(G)$ and  $Y=\mathcal{T}_0(G)$, giving an algorithmic method to get $c_0(G).$

In order to state our main results we need some further notation.
Denote by $\widetilde{\mathcal{C}}_0(G)$ the set of components of $\widetilde{\mathcal{P}}_0(G)$;  by $\mathcal{C}_0(\mathcal{T}(G))$ the set of components of $\mathcal{T}_0(G)$ and by $c_0(\mathcal{T}(G))$ their number;   by $c_0(\mathcal{O}(G))$ the number of components of $\mathcal{O}_0(G)$.
For $T\in  \mathcal{T}(G_0)$, denote by  $\mu_T(G)$ the number  of permutations of type $T$ in $G$; by $o(T)$ the order of any permutation having type $T;$ by $C(T)$ the component of $\mathcal{T}_0(G)$ containing $T$; by $\widetilde{\mathcal{C}}_0(G)_{T}$ the set of components of $\widetilde{\mathcal{P}}_0(G)$ in which there exists at least one vertex of type $T.$ Finally, for $C\in \widetilde{\mathcal{C}}_0(G)_{T}$, let $k_{C}(T)$ be the number of vertices in $C$ having type $T,$
and let $\phi$ denote the Euler totient function.

 \begin{thmA}  Let $G\leq S_n$ be fusion controlled. For $1\leq i\leq c_0(\mathcal{T}(G))$, let $T_i\in \mathfrak{T}(G_0)$ be such that $\mathcal{C}_0(\mathcal{T}(G))=\{C(T_i): i\in\{1,\dots,c_0(\mathcal{T}(G))\}\}$, and pick $C_i\in \widetilde{\mathcal{C}}_0(G)_{T_i}.$
Then
\begin{equation}\label{formula-fusion22}\displaystyle{c_0(G)=\sum_{i=1}^{c_0(\mathcal{T}(G))}\frac{\mu_{T_i} (G)}{\phi(o(T_i))k_{C_i}(T_i)}}.\end{equation}
 \end{thmA}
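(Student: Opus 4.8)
The plan is to read off \eqref{formula-fusion22} from \cite[Theorem B]{bub} applied to $X:=\widetilde{P}_0(G)$ and $Y:=P_0(\mathcal{T}(G))$, the hypotheses being met because, for fusion controlled $G$, the projection $\tilde{t}\colon X\to Y$ is a complete orbit homomorphism by Proposition \ref{conj}. Since the projection of $P_0(G)$ onto $\widetilde{P}_0(G)$ is tame, I would start from $c_0(G)=\widetilde{c}_0(G)$, so that it suffices to count the components of $X$. The count furnished by Theorem B is organized fibre by fibre over the components of $Y$, so the whole proof reduces to (i) identifying and sizing the fibres of $\tilde{t}$ and (ii) checking that the fibres distribute evenly among the components of $X$ lying over a fixed component of $Y$.

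For (i) I would first verify that the cycle type is constant on the generators of a cyclic subgroup, so that it is a well-defined label on the vertices of $\widetilde{P}_0(G)$ and $\tilde{t}^{-1}(T)$ is exactly the set of vertices of type $T$. Indeed, if $\psi$ has type $T$ then $o(\psi)=o(T)=\lcm$ of the cycle lengths of $\psi$, and whenever $\gcd(k,o(T))=1$ each cycle length $\ell\mid o(T)$ satisfies $\gcd(k,\ell)=1$, so each cycle of $\psi$ survives as a single cycle of the same length in $\psi^k$; hence all $\phi(o(T))$ generators of $\langle\psi\rangle$ have type $T$. Since an element of $\langle\psi\rangle$ of type $T$ has order $o(T)=o(\psi)$ and is therefore a generator, the $\mu_T(G)$ permutations of type $T$ partition into cyclic subgroups of order $o(T)$, each contributing exactly $\phi(o(T))$ of them. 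Therefore $|\tilde{t}^{-1}(T)|=\mu_T(G)/\phi(o(T))$.

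For (ii) I would use the orbit structure directly. Let $A\leq\mathrm{Aut}(X)$ be the group whose orbits are the fibres of $\tilde{t}$; it is induced by conjugation by $N_{S_n}(G)$ and hence preserves cycle type. Because $\tilde{t}$ is complete and locally surjective, the image of a component of $X$ is a full component of $Y$ (an easy consequence of local surjectivity, and what lets Theorem B apply), so the components of $X$ meeting type $T_i$ are exactly those lying over $C(T_i)$, i.e.\ the members of $\widetilde{\mathcal{C}}_0(G)_{T_i}$. Given two such components $C,C'$, pick vertices of $C$ and $C'$ over a common vertex of $C(T_i)$: they lie in one $A$-orbit, and the automorphism carrying one to the other carries $C$ onto $C'$. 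Thus $A$ is transitive on $\widetilde{\mathcal{C}}_0(G)_{T_i}$, and being type preserving it forces $k_C(T_i)$ to be independent of $C$, equal to $k_{C_i}(T_i)$. Summing the type-$T_i$ vertices over these components gives $|\widetilde{\mathcal{C}}_0(G)_{T_i}|\cdot k_{C_i}(T_i)=|\tilde{t}^{-1}(T_i)|=\mu_{T_i}(G)/\phi(o(T_i))$.

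Finally, as each component of $X$ maps onto exactly one of the distinct components $C(T_1),\dots,C(T_{c_0(\mathcal{T}(G))})$, the components of $X$ are partitioned according to this assignment, so $c_0(G)=\widetilde{c}_0(G)=\sum_{i}|\widetilde{\mathcal{C}}_0(G)_{T_i}|=\sum_{i}\mu_{T_i}(G)/(\phi(o(T_i))\,k_{C_i}(T_i))$, which is \eqref{formula-fusion22}. The genuinely routine part is the fibre count of step (i); the substantive point, packaged in \cite[Theorem B]{bub}, is the equidistribution in step (ii), and this is exactly where the fusion controlled hypothesis enters: without it $\tilde{t}$ is merely locally surjective, the fibres need not be single $A$-orbits, and the common value $k_{C_i}(T_i)$ can break down.
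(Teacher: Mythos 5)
Your proposal is correct and takes essentially the same route as the paper: reduce $c_0(G)$ to $\widetilde{c}_0(G)$ by tameness of the projection onto $\widetilde{P}_0(G)$, use Proposition \ref{conj} to view $\widetilde{t}$ as a complete orbit homomorphism, partition the components of $\widetilde{P}_0(G)$ over those of $P_0(\mathcal{T}(G))$, and evaluate each class via the fibre count $\mu_T(G)/\phi(o(T))$ together with equidistribution of the type-$T$ vertices among the admissible components. The only difference is presentational: where you re-derive the component partition and the equidistribution by hand (your steps (i) and (ii)), the paper imports them as black boxes --- \cite[Theorem A]{bub} for the partition, and Lemma \ref{fusion2}\,(i) (itself \cite[Proposition 6.8]{bub} combined with the fibre count of Lemma \ref{lem:2}) for each summand.
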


The connectivity properties of the graphs $\widetilde{\mathcal{P}}_0(G)$, $\mathcal{T}_0(G)$ and $\mathcal{O}_0(G)$ are strictly linked when $G\leq S_n$, especially when $G$ is fusion controlled.
 In the last section of the paper we consider $G=S_n$, finding $c_0(S_n)$, $c_0(\mathcal{T}(S_n))$ and $c_0(\mathcal{O}(S_n))$. In particular  we find, with a different approach, the values of $c_0(S_n)$  in \cite[Theorem 4.2]{DF}. Throughout the paper we denote by $P$ the set of prime numbers and put $P+1=\{x\in \mathbb{N}:  x=p+1 \ \hbox{for some}\   p\in P\}. $

 \begin{thmB} The values of $c_0(S_n)= \widetilde {c}_0(S_n), c_0(\mathcal{T}(S_n))$ and $c_0(\mathcal{O}(S_n))$  are as follows.

 \begin{itemize}

 \item[(i)] For $2\leq n\leq 7,$ they are given in Table \ref{eqtable1}
 below.
\\

\begin{table}[h]
\caption{$c_0(S_n), c_0(\mathcal{T}(S_n))$ and $c_0(\mathcal{O}(S_n))$  for $2\leq n \leq 7$.}\label{eqtable1}
\begin{center}

\begin{tabular}{|c|c|c|c|c|c|c|}
\hline
$n$ & $2 $&$ 3$ & $4$ & $5$ & $6$ &$ 7$\\
\hline
$c_0(S_n)$ &$ 1 $&$ 4$ & $13$ &$ 31$ & $83$ & $128$\\
\hline
$c_0(\mathcal{T}(S_n))$ & $1$ & $2$ &$ 3$ &$ 3$ &$ 4$&$ 3$\\
\hline
$c_0(\mathcal{O}(S_n))$ & $1$ &$ 2$ &$ 2$ & $2 $&$ 2$&$ 2$\\

\hline
\end{tabular}
\end{center}
\end{table}

\item[(ii)] For  $n\geq 8$, they are given by Table \ref{eqtable2} below, according to whether $n$ is a prime, one greater than a prime, or neither.
\end{itemize}

\begin{table}[ht]
\caption{$c_0(S_n)$, $c_0(\mathcal{T}(S_n))$ and $c_0(\mathcal{O}(S_n))$ for $ n \geq 8$}\label{eqtable2}
\begin{center}
\begin{tabular}{|c|c|c|c|c|}
\hline
$n$ & $n\in P$ & $n\in P+1$ & $n\notin P\cup (P+1)$ \\
\hline
$c_0(S_n)$ & $(n-2)!+1$ & $n(n-3)!+1$ & $1$ \\
\hline
$c_0(\mathcal{T}(S_n))=c_0(\mathcal{O}(S_n))$ & $2$ & $2$ & $1$  \\

\hline
\end{tabular}
\end{center}
\end{table}

 \end{thmB}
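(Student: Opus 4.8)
The plan is to treat the three quantities in tandem: Theorem A reduces $c_0(S_n)$ to a computation on the type graph $P_0(\mathcal{T}(S_n))$, while $\mathcal{O}_0(S_n)$ is analysed directly. I expect the same qualitative picture for $n\geq 8$: each of $P_0(\mathcal{T}(S_n))$ and $\mathcal{O}_0(S_n)$ consists of one large component together with an isolated vertex coming from the $n$-cycles when $n\in P$, or from the $(n-1)$-cycles when $n\in P+1$, and nothing isolated otherwise (these two cases being mutually exclusive for $n\geq 8$, as then the non-prime of $n-1,n$ is even and composite). The small values $2\leq n\leq 7$ fall outside this regime and will be handled by direct enumeration, exactly as the count for $S_4$ — the six transpositions, the four cyclic groups of $3$-cycles and the three cyclic groups of $4$-cycles giving $6+4+3=13$ components — illustrates.

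First I would analyse $\mathcal{O}_0(S_n)$. Since $o(g')\mid o(g)$ whenever $g'$ is a power of $g$, and conversely every divisor $d\geq 2$ of a realised order is realised by a suitable power, the order graph is exactly the divisibility graph on the realised orders $\geq 2$: two are adjacent iff one divides the other. A composite realised order is never isolated, being adjacent to any of its prime divisors (each $\leq n$, hence realised). A prime $p$ is isolated iff no proper multiple of $p$ is realised; since such a multiple exists whenever $p\leq n-2$ (e.g. via a $p$-cycle and a disjoint transposition for odd $p$, and a $4$-cycle for $p=2$), the isolated primes are exactly those $p\in\{n-1,n\}$ that are prime, and all remaining orders lie in one component through the order $2$. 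This yields the bottom row of Table \ref{eqtable2}.

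For the type graph the substance is the connectivity of the large component, the step I expect to be the main obstacle. The isolated vertices are easy: if $\psi$ is a single $n$-cycle with $n\in P$, or a single $(n-1)$-cycle with $n-1\in P$, then every nonidentity power of $\psi$ has the same type, and a transitivity argument shows $\psi$ can only be a power of a permutation of its own type, so that type is isolated. Conversely I would join every other type to the transposition type $(2,1^{n-2})$. The first reduction is that powering $\psi$ down to prime order $p\mid o(\psi)$ gives type $(p^a,1^{n-pa})$, and that if $\psi$ is not one of the two exceptional cycles then no power of $\psi$ is either (a power that is a single $n$-cycle or $(n-1)$-cycle forces $\psi$ to be one); thus it suffices to connect each non-exceptional prime-order type to a small hub. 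When $pa\leq n-2$ there is room to append a short cycle: for odd $p$ the permutation $(p^a,2,1^{n-pa-2})$ has a power equal to a transposition and a power of type $(p^a,1^{n-pa})$, while involutions are linked to a single $3$-cycle through $(3,2^b,1^{n-3-2b})$. The delicate cases are the dense types, with $pa\in\{n-1,n\}$ and $a\geq 2$, where no short cycle fits; these I would reach by inserting one longer cycle (of length $2p$, or a $6$-cycle) so as to hit simultaneously the dense type and a sparser type already in the large component, handling dense involutions through $4$- and $6$-cycles. Checking that $n\geq 8$ always provides enough points for these auxiliary elements — and that it fails for $n\leq 7$, which is exactly why $S_7$ carries the extra component $\{(3^2,1),(6,1),(2^3,1)\}$ — is the heart of the proof.

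Finally I would assemble the count. Since $S_n$ is fusion controlled, Theorem A applies with one representative type per component of $P_0(\mathcal{T}(S_n))$. The large component contributes $1$ (checked on the transposition type, where $\mu=\binom{n}{2}$, $o=2$, and $k=\binom{n}{2}$). An isolated $n$-cycle type contributes $\mu_{(n)}(S_n)/(\phi(n)\cdot 1)=(n-1)!/(n-1)=(n-2)!$, and an isolated $(n-1)$-cycle type contributes $n(n-2)!/((n-2)\cdot 1)=n(n-3)!$, each $k$-value being $1$ because an exceptional cyclic group collapses to a single isolated vertex of $\widetilde{P}_0(S_n)$. This gives $c_0(S_n)=(n-2)!+1$, $n(n-3)!+1$, or $1$ in the three columns, and the identification of the exceptional vertices simultaneously yields $c_0(\mathcal{T}(S_n))=c_0(\mathcal{O}(S_n))$ throughout $n\geq 8$, completing Table \ref{eqtable2}; Table \ref{eqtable1} is then a finite check.
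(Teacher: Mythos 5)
Your overall architecture is sound and genuinely different from the paper's: the paper establishes its main connectivity result \emph{upstairs}, in the quotient power graph $\widetilde{P}_0(S_n)$ (Proposition \ref{lem:8}: every vertex except those of order a prime $p\geq n-1$ lies in one component $\widetilde{\Delta}_n$), and then obtains the type-graph and order-graph rows by projecting along the complete homomorphisms $\widetilde{t}$ and $\widetilde{o}$; you instead work \emph{downstairs}, in $P_0(\mathcal{T}(S_n))$ and $\mathcal{O}_0(S_n)$, where the vertices are partitions and integers, and then lift the count back up through Theorem A. Your order-graph analysis, and your partition-level moves (appending a transposition to $[p^a,1^{n-pa}]$ when $pa\leq n-2$, the $2p$-cycle device for dense odd-prime types, the $6$-cycle device for dense involutions, and the fact that $[n]$ and $[1,n-1]$ are never proper powers), are correct and would establish the claimed structure of $P_0(\mathcal{T}(S_n))$ and $\mathcal{O}_0(S_n)$, i.e.\ the lower rows of both tables.

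The gap is in the lift. To make the big component of $P_0(\mathcal{T}(S_n))$ contribute exactly $1$ in Theorem A you need $k_{C}([1^{n-2},2])=\binom{n}{2}$, i.e.\ that \emph{all} transpositions lie in a single component of $\widetilde{P}_0(S_n)$. You assert this parenthetically as ``checked'', but it is not a routine computation and it does not follow from anything you prove: connectedness of the type graph puts no upper bound on the number of components of $\widetilde{P}_0(S_n)$ lying over a connected piece of $P_0(\mathcal{T}(S_n))$ --- over the single isolated vertex $[n]$ (for $n=p$) there sit $(n-2)!$ components, which is precisely why the formula has the factor $k_{C_i}(T_i)$. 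So a separate path argument inside $\widetilde{P}_0(S_n)$ is unavoidable; this is exactly the paper's Lemma \ref{lem:4}, proved for $n\geq 6$ via paths such as $[(a~b)],\ [(a~b)(d~e~f)],\ [(d~e~f)],\ [(a~c)(d~e~f)],\ [(a~c)]$, and both the paper's route and yours rest on it. Once you add that lemma, your plan goes through. Two smaller repairs: your sparse/dense dichotomy misses the involution types with $2b=n-2$ (appending a $3$-cycle needs $2b+3\leq n$), e.g.\ $[2^3,1^2]$ in $S_8$, though your own $6$-cycle device covers them; and dismissing $2\leq n\leq 7$ as ``a finite check'' conceals genuine case work (the paper's handling of $[2^3]$ in $S_6$ and of $[1,6]$ in $S_7$ requires ad hoc path arguments), even if it is finite in principle and your $S_4$ sample count $6+4+3=13$ is correct.
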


  \begin{corC} Let $n\geq 2.$ The following are equivalent:
 \begin{itemize}
 \item[(i)] $\mathcal{P}(S_n)$ is $2$-connected;
\item[(ii)] $\mathcal{P}_0(S_n)$ is connected;
 \item[(iii)] $\widetilde{\mathcal{P}}_0(S_n)$ is connected;
\item[(iv)] $\mathcal{T}_0(S_n)$ is connected;
 \item[(v)] $\mathcal{O}_0(S_n)$ is connected;
 \item[(vi)] $n=2$ or $n\in \mathbb{N}\setminus [P\cup(P+1)].$
\end{itemize}
 \end{corC}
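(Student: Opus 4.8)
The plan is to deduce the whole chain of equivalences from Theorem B together with the two structural facts recalled in the introduction, treating $n=2$ as a boundary case. First I would dispose of the three ``graph-independent'' equivalences (i)$\Leftrightarrow$(ii)$\Leftrightarrow$(iii), which hold for every finite group and involve nothing special about $S_n$. The equivalence (i)$\Leftrightarrow$(ii) is exactly the elementary observation recalled at the start of the paper: since $P(G)$ is connected of diameter at most $2$ and the identity is adjacent to every vertex, $P(G)$ is $2$-connected precisely when its $1$-deleted subgraph $P_0(G)$ is connected. For (ii)$\Leftrightarrow$(iii) I would invoke that the projection $P_0(G)\to\widetilde{P}_0(G)$ is tame, whence $c_0(G)=\widetilde{c}_0(G)$; so $P_0(S_n)$ is connected (i.e. $c_0(S_n)=1$) if and only if $\widetilde{P}_0(S_n)$ is connected (i.e. $\widetilde{c}_0(S_n)=1$).

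Next I would reduce the remaining conditions to counting statements: (ii), (iv) and (v) assert respectively that $c_0(S_n)=1$, $c_0(\mathcal{T}(S_n))=1$ and $c_0(\mathcal{O}(S_n))=1$, since a graph is connected exactly when its number of components equals $1$. The key input is now Theorem B, which supplies the precise value of each of these three quantities for all $n\ge 2$. The task thereby becomes purely a matter of reading Tables \ref{eqtable1} and \ref{eqtable2} and checking that, for every $n\ge 2$, the three counts equal $1$ under exactly the same arithmetic condition on $n$, namely (vi).

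To carry this out I would split into ranges. For $n=2$ the group $S_2$ has the single nonidentity element $(1\,2)$, so each of $P_0(S_2)$, $\widetilde{P}_0(S_2)$, $P_0(\mathcal{T}(S_2))$ and $\mathcal{O}_0(S_2)$ consists of a single vertex and all counts are $1$; although $2\in P$, the value $n=2$ is admitted by the first clause of (vi). For $3\le n\le 7$, Table \ref{eqtable1} shows that $c_0(S_n)$, $c_0(\mathcal{T}(S_n))$ and $c_0(\mathcal{O}(S_n))$ all exceed $1$, while each such $n$ lies in $P\cup(P+1)$ and differs from $2$, so (vi) fails, in agreement. For $n\ge 8$, Table \ref{eqtable2} shows that $c_0(S_n)=1$, $c_0(\mathcal{T}(S_n))=1$ and $c_0(\mathcal{O}(S_n))=1$ each hold if and only if $n\notin P\cup(P+1)$, which for $n\ge 8$ is precisely condition (vi). Combining the three ranges shows that each of (ii), (iv), (v) is equivalent to (vi), and together with the first paragraph this closes the cycle. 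I do not expect a genuine obstacle here: the corollary merely repackages Theorem B, whose proof carries all the real content. The only points demanding care are the special status of $n=2$, which lies in $P$ yet yields connected graphs and so forces the extra clause in (vi), and the verification that every entry of Table \ref{eqtable1} for $3\le n\le 7$ genuinely exceeds $1$, so that no small prime or prime-plus-one sneaks in as a spurious connected case.
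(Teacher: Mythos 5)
Your proposal is correct and follows essentially the same route as the paper, whose entire proof reads ``A check on Tables \ref{eqtable1} and \ref{eqtable2} of Theorem B'': you reduce (ii), (iv), (v) to the counts $c_0(S_n)$, $c_0(\mathcal{T}(S_n))$, $c_0(\mathcal{O}(S_n))$ being $1$ and verify this against the tables, exactly as intended. The only difference is that you spell out the reductions (i)$\Leftrightarrow$(ii) (the observation from the introduction) and (ii)$\Leftrightarrow$(iii) (tameness of the projection, i.e.\ $c_0(S_n)=\widetilde{c}_0(S_n)$, which is in fact already asserted in the statement of Theorem B), details the paper leaves implicit.
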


Observe that $\mathcal{T}_0(S_n)$  has a purely arithmetic  interest, because $\mathfrak{T}(S_n)$ is the whole set of the partition of $n.$
Going beyond a mere counting, we describe the components of the  graphs belonging to  $\mathcal {G}_0=\{\mathcal{P}_0(S_n),\widetilde{\mathcal{P}}_0(S_n), \mathcal{T}_0(S_n), \mathcal{O}_0(S_n)\}. $
To start with, note that in the connected case $n\in \mathbb{N}\setminus [P\cup(P+1)]$, no $X_0\in \mathcal {G}_0$ is  a complete graph because $X_0$ admits as quotient $\mathcal{O}_0(S_n)$ which is surely incomplete having as vertices at least two primes. For $\psi\in S_n\setminus\{id\}$ vertex of $\mathcal{P}_0 (S_n)$, let $[\psi]$ denote the corresponding vertex of  $\widetilde{\mathcal{P}}_0(S_n)$.

 \begin{thmD}  Let $n\geq 8$, with $n\in\{p,p+1\}$ for some prime $p$. Let $k=(n-2)!$ if $n=p$ and let $k=n(n-3)!$ if $n=p+1.$ Let $\Delta_n$ be the set  of every nonidentity permutation in $S_n$ which is not a $p$-cycle.

     \begin{itemize}
      \item[(i)] $\mathcal{P}_0(S_n)$ consists of the main component induced on $\Delta_n$ and $k$-many  complete components, each comprised of $(p-1)$-many $p$-cycles.
   \item[(ii)] $\widetilde{\mathcal{P}}_0(S_n)$ consists of the main component 
    induced on $\{[\psi]:\psi\in \Delta_n\}$ and $k$-many
   isolated vertices.
   \item[(iii)] $\mathcal{T}_0(S_n)$ consists of the main component 
    induced on $\{T_{\psi}:\psi\in \Delta_n\}$ and the component containing the type of a $p$-cycle which is  an isolated vertex.
 \item[(iv)] $\mathcal{O}_0(S_n)$ consists of the main component 
  induced by $\{o(\psi): \psi\in \Delta_n\}$ and the component given by the isolated vertex $p.$
 \end{itemize}
 In all the above cases, the main component is never complete.
\end{thmD}

Complete  information about the components of the graphs in $\mathcal{G}_0$, for $3\leq n\leq 7$, can be found within the proof of Theorem B\,(i), taking into account Lemma \ref{component-quotient} for  $\mathcal{P}_0(S_n)$.
In particular, looking at the details, one easily checks that all the components  of $X_0 \in\mathcal{G}_0$ apart from one are isolated vertices (complete graphs when $X_0=\mathcal{P}_0(S_n)$) if and only if $n\geq 8$ or $n=2.$

In a forthcoming paper  \cite{BIS2} we will treat the alternating group $A_n$ computing $c_0(A_n)$, $c_0(\mathcal{T}(A_n))$ and $c_0(\mathcal{O}(A_n)).$ We will also correct some mistakes about $c_0(A_n)$ found in \cite{DF}. We believe that our algorithmic method \cite[Theorem B]{bub} may help, more generally, to obtain $c_0(G)$ where $G$ is simple and almost simple. This, in particular, could give an answer to the interesting problem of classifying all the simple groups with $2$-connected power graph, posed in \cite[Question 2.10]{pya}. About that problem, in  \cite{BIS2}  we show that there exist infinite examples of alternating groups with $2$-connected power graph and that $A_{16}$  is that of smaller degree.

\vskip 0.6 true cm
\section{\bf Graphs}\label{graphs}
\vskip 0.4 true cm
For a finite set $A$ and $k\in \mathbb{N}$, let $\binom{A}{k}$ be the set of the subsets of $A$ of size $k.$ In this paper,  as in \cite{bub}, a graph
$X=(V_X,E_X)$ is a pair of finite sets such that $V_X\neq\varnothing$ is the set of vertices, and $E_X$
 is the set of edges which is the union of the set of  loops $L_X=\binom{V_X}{1}$
and a set of  proper edges $E^*_X\subseteq \binom{V_X}{2}$. We usually specify the edges of a graph $X$ giving only $E^*_X$.

 Paper \cite{bub}  is the main reference for the present paper. For the general information about graphs see  \cite[Section 2]{bub}. Recall that, for a graph $X$, $\mathcal{C}(X)$ denotes the set of components of $X$ and $c(X)$ their number. If $x\in V_X$, the component of $X$ containing $x$ is denoted by $C_X(x)$ or more simply, when there is no risk of misunderstanding, by $C(x)$.
For $s\in\mathbb{N}\cup\{0\}$, a subgraph $\gamma$ of $X$ such that $V_{\gamma}=\{x_i: 0\leq i\leq s\}$ with distinct $x_i\in V_X$ and $E^*_{\gamma}=\{\{x_i,x_{i+1}\} : 0\leq i\leq s-1\}$, is called a  path of length $s$ between $x_0$ and $x_s$, and will be simply denoted  by the ordered list $x_0,\dots, x_s$ of its vertices.

 For the formal definitions of surjective, complete, tame, locally surjective, pseudo-covering, orbit and component equitable homomorphism and the notation for the corresponding sets of homomorphisms see  \cite[Section 4.2, Definitions  5.3, 5.7, 4.4, 6.4]{bub}.

By \cite[Propositions 5.9 and 6.9]{bub}, we have that
 \begin{equation}\label{general-inc}
 \mathrm{O}(X,Y)\cap  \mathrm{Com}(X,Y)\subseteq \mathrm{LSur}(X,Y)\cap \mathrm{CE}(X,Y)\cap \mathrm{Com}(X,Y)
 \end{equation}
The content  of \cite{bub} is all we need to conduce our arguments up to just a couple of definitions and related results.
 \begin{defn}\label{q-2hom} {\rm Let $X$ and $Y$ be graphs, and  $f\in\mathrm{Hom}(X,Y).$  Then $f$ is called a $2$-{\it homomorphism} if, for every $e\in E^*_X$, $f(e)\in E^*_Y$. We denote the set of the $2$-homomorphisms from $X$ to $Y$ by $2 \mathrm{Hom}(X,Y)$.}
 \end{defn}
From that definition we immediately deduce the following lemma.
\begin{lem}\label{2hom-iso} Let $f\in 2\mathrm{Hom}(X,Y)$ and $x\in V_X$. If $f(x)$ is isolated in $Y,$ then $x$ is isolated in $X$.
 \end{lem}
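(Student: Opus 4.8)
The plan is to prove the contrapositive: assuming that $x$ is \emph{not} isolated in $X$, I will produce a proper edge of $Y$ incident to $\varphi(x)$, thereby showing $\varphi(x)$ is not isolated in $Y$. Since every graph here is reflexive, recall that a vertex is isolated precisely when no \emph{proper} edge is incident to it (the loop is always present and plays no role). Thus the negation of ``$x$ is isolated in $X$'' simply gives a vertex $y\in V_X$ with $\{x,y\}\in E^*_X$, and in particular $y\neq x$ since proper edges are elements of $\binom{V_X}{2}$.

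First I would invoke the defining property of a $2$-homomorphism (Definition \ref{q-2hom}): from $e=\{x,y\}\in E^*_X$ we obtain $\varphi(e)\in E^*_Y$. The only point that needs to be made explicit is that $\varphi(e)\in E^*_Y\subseteq\binom{V_Y}{2}$ is a set of size exactly two, which forces $\varphi(x)\neq\varphi(y)$; hence $\varphi(e)=\{\varphi(x),\varphi(y)\}$ is a genuine proper edge and not a collapsed loop. This is the one place where the hypothesis $\varphi\in 2\mathrm{Hom}(X,Y)$, rather than merely $\varphi\in\mathrm{Hom}(X,Y)$, is used: an ordinary homomorphism need only send proper edges to edges, allowing the image to degenerate to a loop, whereas a $2$-homomorphism guarantees the image stays proper.

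Finally, since $\varphi(x)\in\varphi(e)$ and $\varphi(e)\in E^*_Y$, the vertex $\varphi(x)$ has an incident proper edge in $Y$, so $\varphi(x)$ is not isolated in $Y$. Taking the contrapositive yields the assertion. There is no real obstacle here; the lemma is a direct unwinding of the definition of $2$-homomorphism together with the convention that isolated means ``no incident proper edge.'' The proof is therefore immediate and serves chiefly to record the transfer-of-isolation property that will be applied to the quotient projections studied later.
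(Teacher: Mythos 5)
Your proof is correct and matches the paper's intent exactly: the paper gives no explicit argument, stating only that the lemma follows immediately from Definition~\ref{q-2hom}, and your contrapositive unwinding—using that a $2$-homomorphism sends a proper edge $\{x,y\}\in E^*_X$ to a genuine two-element set in $E^*_Y$, hence a proper edge incident to $\varphi(x)$—is precisely that immediate deduction made explicit.
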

 \begin{defn}\label{deleted} {\rm
 Let $X$ be a graph.
 If $x_0\in V_X$, then the {\it $x_0$-deleted subgraph} $X-x_0$ is defined as the subgraph of $X$ with vertex set $V_X\setminus\{x_0\}$ and edge set given by the edges in $E_X$ not incident to $x_0$.
$X$ is called {\it $2$-connected} if, for every $x_0\in V_X$, $X-x_0$ is connected.}
 \end{defn}
 To deal with vertex deleted subgraphs and quotient graphs, we will use the following lemma several times.
\begin{lem}\label{cutgraph-hom} Let $f\in\mathrm{Hom}(X,Y)$.
\begin{itemize}
\item[(i)] Suppose $x_0\in V_X$ is such that $f^{-1}(f(x_0))=\{x_0\}$. Then $f$ induces naturally  $f_{x_0}\in\mathrm{Hom}( X-x_0,Y-f(x_0)).$
Moreover, if $f$ is surjective (complete, pseudo-covering), then also $f_{x_0}$ is surjective (complete, pseudo-covering).
\item[(ii)] Let $\sim$ be an equivalence relation on $V_X$ such that, for each $x_1,x_2\in V_X$, $x_1\sim x_2$ implies $f(x_1)=f(x_2).$ Then the map $\widetilde{f}:[V_X]\rightarrow V_Y$, defined by $\widetilde{f}([x])=f(x)$ for all $[x]\in[V_X]$, is a homomorphism from $X/\hspace{-1mm}\sim$ to $Y$ such that $\widetilde{f}\circ \pi=f.$
If $f$ is surjective (complete, pseudo-covering), then also $\widetilde{f}$ is  surjective (complete, pseudo-covering).
\end{itemize}

\end{lem}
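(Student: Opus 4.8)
The plan is to treat the two parts separately, in each case first producing the asserted map and checking it is a homomorphism, and then verifying that each of the three listed properties (surjective, complete, pseudo-covering) is inherited. Both parts are, at bottom, verifications against the definitions, so the real work is simply keeping track of which vertices and edges can possibly occur.

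For part (i) I would define $\varphi_{x_0}$ to be the restriction of $\varphi$ to $V_X\setminus\{x_0\}$; this is the natural induced map. The hypothesis $\varphi^{-1}(\varphi(x_0))=\{x_0\}$ is the crux and is used twice. First, it guarantees that no $x\neq x_0$ satisfies $\varphi(x)=\varphi(x_0)$, so the image of $\varphi_{x_0}$ lands in $V_Y\setminus\{\varphi(x_0)\}=V_{Y-\varphi(x_0)}$. Second, given a proper edge $\{a,b\}$ of $X-x_0$ (so $a,b\neq x_0$), the same condition forces $\varphi(a),\varphi(b)\neq\varphi(x_0)$, whence $\{\varphi(a),\varphi(b)\}$ is not incident to $\varphi(x_0)$ and is therefore an edge of $Y-\varphi(x_0)$; loops are handled identically. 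This yields $\varphi_{x_0}\in\mathrm{Hom}(X-x_0,Y-\varphi(x_0))$. Surjectivity and completeness are then obtained by lifting: any $y\neq\varphi(x_0)$ has a $\varphi$-preimage automatically distinct from $x_0$, and any edge $e'$ of $Y$ not incident to $\varphi(x_0)$ has a $\varphi$-preimage edge $e$ that cannot be incident to $x_0$, for otherwise $\varphi(e)$ would be incident to $\varphi(x_0)$.

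For part (ii), well-definedness of $\tilde{\varphi}$ is exactly the content of the hypothesis $x_1\sim x_2\Rightarrow\varphi(x_1)=\varphi(x_2)$, and the identity $\tilde{\varphi}\circ\pi=\varphi$ is then immediate from the definitions. To see $\tilde{\varphi}$ is a homomorphism I would take a proper edge $\{[x_1],[x_2]\}$ of $X/\hspace{-1mm}\sim$, pick representatives $a\sim x_1$, $b\sim x_2$ with $\{a,b\}$ a proper edge of $X$ (such exist by the definition of the quotient edge set), and observe that $\tilde{\varphi}(\{[x_1],[x_2]\})=\{\varphi(a),\varphi(b)\}=\varphi(\{a,b\})\in E_Y$; loops are covered by reflexivity. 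The three inheritance statements then follow from $\tilde{\varphi}\circ\pi=\varphi$ together with the fact that $\pi$ is itself a complete homomorphism, surjective on both vertices and edges: every vertex and every edge realizing the relevant image under $\varphi$ factors through $\pi$, so surjectivity on vertices and on edges passes from $\varphi$ to $\tilde{\varphi}$.

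The only step requiring genuine care is the pseudo-covering case in both parts, since that notion is defined in \cite{bub} through a local condition on neighborhoods rather than by a global image property. For (i) I would check that deleting the single vertex $x_0$, whose fibre is the singleton $\{x_0\}$, disturbs the neighborhood-surjectivity condition at no remaining vertex, precisely because $\varphi$ is injective over $\varphi(x_0)$ so no neighbor relation is lost or spuriously created near $\varphi(x_0)$. For (ii) I would verify the local condition at a class $[x]$ by transporting it, via $\tilde{\varphi}\circ\pi=\varphi$, to the corresponding condition for $\varphi$ at a representative $x$, again exploiting that $\pi$ is complete and locally surjective. I expect this bookkeeping around the precise definition of pseudo-covering to be the main, if modest, obstacle; by contrast the surjective and complete cases are routine.
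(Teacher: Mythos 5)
Your proposal is correct and follows essentially the same route as the paper's proof: in (i) the restriction map with the fibre hypothesis $\varphi^{-1}(\varphi(x_0))=\{x_0\}$ used once for vertices and once for edges, and edge-lifting for completeness; in (ii) the factorization $\tilde{\varphi}\circ\pi=\varphi$ for surjectivity and lifting edges through $\varphi$ and projecting them via $\pi$ for completeness. The only cosmetic differences are that the paper invokes \cite{kna} for the fact that $\tilde{\varphi}$ is a well-defined homomorphism satisfying $\tilde{\varphi}\circ\pi=\varphi$ (you verify it directly), and it dispatches the pseudo-covering case as an ``obvious adaptation'' of the completeness argument, whereas you sketch the neighborhood-surjectivity bookkeeping explicitly --- correctly in both parts.
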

\begin{proof} (i)  Since $f(V_X\setminus\{x_0\})\subseteq V_Y\setminus\{f(x_0)\}$,  we  can consider the map $f_{x_0}: V_X\setminus\{x_0\}\rightarrow V_Y\setminus\{f(x_0)\}$, defined by $f_{x_0}(x)=f(x)$ for all $x\in V_X\setminus\{x_0\}.$
We show that $f_{x_0}$ defines a homomorphism. Pick $e\in E_{X-x_0}$, so that $e=\{x_1,x_2\}\in E_X$  for suitable $x_1,x_2\in V_X$ with  $x_1,x_2\neq x_0$.
By  $f^{-1}(f(x_0))=\{x_0\}$, we get $f(x_1), f(x_2)\neq f(x_0)$ and thus, since $f$ is a homomorphism, we get $\{f(x_1), f(x_2)\}\in E_{Y-f(x_0)}.$
If $f$ is surjective, clearly $f_{x_0}$ is surjective.
Assume now that $f$ is complete  and show that $f_{x_0}$ is complete. Let $\{f_{x_0}(x_1), f_{x_0}(x_2)\}\in E_{Y-f(x_0)}.$ Then $\{f(x_1), f(x_2)\}\in E_Y,$ with $f(x_1), f(x_2)\neq f(x_0)$.
Since $f$ is complete, there exists $\overline{x}_1, \overline{x}_2\in V$ such that $f(\overline{x}_1)=f(x_1),\  f(\overline{x}_2)=f(x_2)$ and $\{\overline{x}_1, \overline{x}_2\}\in E_X.$ From $f(x_1), f(x_2)\neq f(x_0)$ we deduce that $\overline{x}_1, \overline{x}_2\neq x_0.$ Thus $\overline{x}_1, \overline{x}_2\in V_{X-x_0}$ and $\{\overline{x}_1, \overline{x}_2\}\in E_{X-x_0}.$  An obvious adaptation of this argument works also in the pseudo-covering case.

(ii) The fact that $\widetilde{f}$ is a homomorphism such that $\widetilde{f}\circ \pi=f$ is the content of \cite[Theorem 1.6.10]{kna}. Assume that $f$ is surjective. Then, by $\widetilde{f}\circ \pi=f$,  $\widetilde{f}$ is surjective too.
Assume now that $f$ is complete  and show that $\widetilde{f}$ is complete. By what observed above, $\widetilde{f}$ is surjective.
 Let $e=\{\widetilde{f}([x_1]), \widetilde{f}([x_2])\}=\{f(x_1), f(x_2)\}\in E_Y$.
Since $f$ is complete, there exists $\overline{x}_1, \overline{x}_2\in V_X$ such that $f(\overline{x}_1)=f(x_1),  f(\overline{x}_2)=f(x_2)$ and $\{\overline{x}_1, \overline{x}_2\}\in E_X.$  Then also $e'=\{[\overline{x}_1], [\overline{x}_2]\}\in [E_X]$ and $e=\widetilde{f}(e').$ An obvious adaptation of this argument works also in the pseudo-covering case.
\end{proof}

When no ambiguity arises, we denote the map $f_{x_0}$ of the above lemma again by $f.$

In the following, whatever the deleted vertex $x_0\in V_X$ is, we write $X_0=((V_X)_0,(E_X)_0)$ for the $x_0$-deleted subgraph. Moreover  we write $\mathcal{C}_0(X)$ for the set of the components of $X_0$ as well as $c_0(X)$  for their number. This  helps to standardize the notation throughout the paper. Some abbreviations will sometimes be introduced. The terminology not  explicitly introduced is standard and can be found in \cite{dl}.

\vskip 1 true cm
\section{\bf Power graphs}\label{onda}
\vskip 0.4 true cm
Throughout the next sections, let $G$ be a finite group with identity element $1$ and let $G_0=G\setminus\{1\}$. For $x\in G,$ denote by $o(x)$ the order of $x.$
\vskip 0.4 true cm
\begin{defn}\label{pow-gr}{\rm   The
{\it power graph} of $G$ is the graph $\mathcal{P} (G)=(G,E)$ where, for $x,y\in G$, $\{x,y\}\in E$ if there exists $m\in\mathbb{N}$ such that $x=y^m$ or $y=x^m$.
The {\it proper  power graph} $\mathcal{P}_0 (G)=(G_0,E_0)$ is defined as the $1$-deleted subgraph  of $\mathcal{P}(G).$}
\end{defn}

To deal with the graphs $\mathcal{P}(G)$ and $\mathcal{P}_0 (G)$ and simplify their complexity, we start considering the corresponding quotient graphs in which the elements of $G,$ generating the same cyclic subgroup, are identified  in a unique vertex.

\begin{defn}\label{qpow-gr}{\rm Define  for $x,y\in G$, $x\sim y$ if $\langle x\rangle=\langle y\rangle$. Then $\sim$ is an equivalence relation on $G$ and the equivalence class of $x\in G$, $[x]=\{x^m : 1\leq m\leq o(x),\,\gcd (m,o(x))=1\}$ has size $\phi(o(x))$.
 The quotient graph $\mathcal{P}(G)/\hspace{-1mm}\sim=([G],[E])$  is called the\emph{ quotient power graph} of $G$ and denoted  by $\widetilde{\mathcal{P}}(G)$.}
\end{defn}

By definition of quotient graph,  the vertex set of   $\mathcal{P}(G)/\hspace{-1mm}\sim$ is $[G]=\{[x] : x\in G\}$ and $\{[x],[y]\}\in [E]$ is an edge in $\widetilde{\mathcal{P}}(G)$  if there exist $\widetilde{x}\in [x]$ and $\widetilde{y}\in[y]$ such that $\{\widetilde{x},\widetilde{y}\}\in E$, that is, $\widetilde{x},\widetilde{y}$ are one the power of the other.

\begin{lem}\label{lato} For every $x,y\in G$, $\{[x], [y]\}\in [E]$
 if and only if $\{x,y\}\in E$.
\end{lem}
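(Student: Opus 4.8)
The plan is to reduce adjacency in $P(G)$ to a condition on the generated cyclic subgroups, a condition that is by construction insensitive to the choice of generator and hence invariant under $\sim$.

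The crux I would establish first is the reformulation: for all $x,y\in G$, one has $\{x,y\}\in E$ if and only if $\langle x\rangle\subseteq\langle y\rangle$ or $\langle y\rangle\subseteq\langle x\rangle$. This holds because $x=y^m$ for some $m\in\mathbb{N}$ is the same as $x\in\langle y\rangle$, and since $G$ is finite every element of $\langle y\rangle$ is of the form $y^m$ with $m\in\mathbb{N}$; thus $x=y^m$ for some $m$ is equivalent to $\langle x\rangle\subseteq\langle y\rangle$, and symmetrically for $y=x^m$. Consequently adjacency in $P(G)$ is determined entirely by the unordered pair $\{\langle x\rangle,\langle y\rangle\}$.

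With this in hand the two implications are routine. For the forward implication I would simply take $\tilde{x}=x\in[x]$ and $\tilde{y}=y\in[y]$ in the definition of $[E]$, so that $\{x,y\}\in E$ gives $\{[x],[y]\}\in[E]$ at once. For the reverse implication, assume $\{[x],[y]\}\in[E]$ and pick $\tilde{x}\in[x]$, $\tilde{y}\in[y]$ with $\{\tilde{x},\tilde{y}\}\in E$. Since $\tilde{x}\sim x$ and $\tilde{y}\sim y$ give $\langle\tilde{x}\rangle=\langle x\rangle$ and $\langle\tilde{y}\rangle=\langle y\rangle$, the reformulation applied to $\tilde{x},\tilde{y}$ yields a containment between $\langle x\rangle$ and $\langle y\rangle$, and the reformulation applied back to $x,y$ gives $\{x,y\}\in E$.

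There is no real obstacle here; the only point needing attention is that the literal power relation $\tilde{x}=\tilde{y}^m$ need not survive when we replace $\tilde{x},\tilde{y}$ by $x,y$, so one must route the argument through the symmetric subgroup-containment characterization rather than through the exponents themselves. The reflexive case $[x]=[y]$ is automatically subsumed, since then $\langle x\rangle=\langle y\rangle$ forces $\{x,y\}\in E$ while $\{[x],[y]\}$ is a loop of $\widetilde{P}(G)$.
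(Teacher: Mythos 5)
Your proof is correct and takes essentially the same route as the paper: the paper handles the nontrivial direction by writing $x=\tilde{x}^a$, $\tilde{x}=\tilde{y}^m$, $\tilde{y}=y^b$ and composing to get $x=y^{abm}$, which is exactly your containment chain $\langle x\rangle=\langle\tilde{x}\rangle\subseteq\langle\tilde{y}\rangle=\langle y\rangle$ spelled out with exponents (finiteness of $G$ being used in both to turn membership in a cyclic subgroup into a positive power). Your repackaging through the symmetric subgroup-containment characterization is a cosmetic, not substantive, difference.
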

\begin{proof}  Let $x,y\in G$ such that $\{[x], [y]\}$ is an edge in $\widetilde{\mathcal{P}}(G)$. Then there exist $\tilde{x}\in [x]$ and $\tilde{y}\in[y]$ such that one of them is a power of the other. To fix the ideas, let $\tilde{x}=(\tilde{y})^m$, for some $m\in\mathbb{N}.$ Since $x\in \langle \tilde{x}\rangle$ and $\tilde{y}\in \langle y\rangle$, there exist $a,b\in\mathbb{N}$ such that $x=(\tilde{x})^a$ and $\tilde{y}=y^b$. It follows that $x=y^{abm}$ and thus $\{x,y\}\in E$. The converse is trivial.
\end{proof}
The above lemma  may be thought of as saying  that the projection of $\mathcal{P}(G)$ onto its quotient $\widetilde{\mathcal{P}}(G)$ is a strong homomorphism in the sense of \cite[Definition 1.5]{kna-paper}.

\begin{defn}\label{qpropow-gr}{\rm The $[1]$-deleted subgraph of $\widetilde{\mathcal{P}}(G)$ is called
the  \emph{proper quotient power graph}  of $G$ and denoted by $\widetilde{\mathcal{P}}_0(G)=([G]_0,[E]_0)$. }
\end{defn}

Since $[x]=[1]$ if and only if $x=1,$ applying Lemma \ref{cutgraph-hom} and \cite[Lemma 4.3]{bub}  to the projection of $\mathcal{P}(G)$ onto $\widetilde{\mathcal{P}}(G)$ gives that $\widetilde{\mathcal{P}}_0(G)$ is equal to the quotient graph $\mathcal{P}_0(G)/\hspace{-1mm}\sim$.  For short, we  denote the set of components of $\mathcal{P}_0(G)$ by $\mathcal{C}_0 (G)$ and their number by $c_0(G)$. Similarly we denote the set of components of $\widetilde{\mathcal{P}}_0(G)$ by $\widetilde{\mathcal{C}}_0 (G)$ and their number by $\widetilde{c}_0(G)=c(\widetilde{\mathcal{P}}_0(G)).$ Lemma \ref{lato} immediately extends to $\widetilde{\mathcal{P}}_0(G)$.

 \begin{lem}\label{lato2} For every $x,y\in G_0,$ $\{[x], [y]\}\in [E]_0$ if and only if $\{x,y\}\in E_0$.
\end{lem}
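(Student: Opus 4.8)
The plan is to deduce Lemma~\ref{lato2} directly from Lemma~\ref{lato} together with the explicit description of the edge sets of vertex-deleted subgraphs. First I would unwind the definitions: by Definition~\ref{deleted}, the edge set $[E]_0$ of the $[1]$-deleted subgraph $\widetilde{P}_0(G)$ consists exactly of those edges of $[E]$ not incident to $[1]$, and likewise $E_0$ consists of the edges of $E$ not incident to $1$. So the whole point is to check that, for $x,y\in G_0$, the two edges $\{x,y\}$ and $\{[x],[y]\}$ under consideration are never the deleted edges, after which Lemma~\ref{lato} carries all the content.

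To that end I would invoke the equivalence $[z]=[1]\iff z=1$, already noted in the passage preceding this lemma. Fixing $x,y\in G_0$, we have $x,y\neq 1$ and hence $[x],[y]\neq[1]$. Consequently the proper edge $\{x,y\}$ is not incident to $1$, giving $\{x,y\}\in E_0\iff\{x,y\}\in E$, and the edge $\{[x],[y]\}$ is not incident to $[1]$, giving $\{[x],[y]\}\in[E]_0\iff\{[x],[y]\}\in[E]$. Chaining these two equivalences with Lemma~\ref{lato}, which asserts $\{[x],[y]\}\in[E]\iff\{x,y\}\in E$, yields at once $\{[x],[y]\}\in[E]_0\iff\{x,y\}\in E_0$, as required.

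I do not expect a genuine obstacle here: the substantive work was done in Lemma~\ref{lato}, and the only thing to be careful about is the bookkeeping that guarantees the non-incidence conditions, which is immediate from $[z]=[1]\iff z=1$. One could also phrase the argument more abstractly via Lemma~\ref{cutgraph-hom}\,(i) applied to the projection $\pi\colon P(G)\to\widetilde{P}(G)$, since $\pi^{-1}(\pi(1))=\{1\}$ shows that passing to the $1$-deleted and $[1]$-deleted graphs is compatible with the projection; but the direct verification above is shorter and self-contained.
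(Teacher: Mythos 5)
Your proof is correct and takes essentially the same route as the paper: the paper offers no separate argument for Lemma~\ref{lato2}, saying only that Lemma~\ref{lato} ``immediately extends'' to $\widetilde{P}_0(G)$ after noting that $[x]=[1]$ if and only if $x=1$. Your write-up simply makes that one-line claim explicit, by checking that for $x,y\in G_0$ neither $\{x,y\}$ nor $\{[x],[y]\}$ can be incident to the deleted vertex, and then chaining the resulting equivalences with Lemma~\ref{lato}.
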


\begin{lem}\label{rmk:1}  The graph  $\widetilde{\mathcal{P}}_0(G)$  is a tame and pseudo-covered
 quotient of $P_0 (G)$. In particular $c_0(G)=\widetilde{c}_0(G).$
\end{lem}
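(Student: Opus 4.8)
The plan is to establish the three claims of Lemma~\ref{rmk:1} in sequence by reducing everything to the projection map and the results already in hand. First I would fix notation: let $\pi\colon G_0\to [G]_0$ denote the restriction to $G_0$ of the natural projection $x\mapsto[x]$, which by the discussion preceding the lemma realizes $\widetilde{P}_0(G)$ as the quotient graph $P_0(G)/\hspace{-1mm}\sim$. The content of the lemma is then exactly the assertion that this projection is tame and pseudo-covering, from which the equality of component counts will follow by invoking the transfer principle of \cite{bub}.

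For tameness, I would argue directly from the definition: a homomorphism is tame if any two vertices sharing the same image are connected in the source. Two vertices $x,y\in G_0$ satisfy $\pi(x)=\pi(y)$ precisely when $[x]=[y]$, i.e. $\langle x\rangle=\langle y\rangle$. In that case each of $x,y$ is a power of the other, so $\{x,y\}\in E_0$ and they are adjacent (hence connected) in $P_0(G)$. This handles tameness with essentially no computation. For the pseudo-covering property I would rely on Lemma~\ref{lato2}, which states that $\{[x],[y]\}\in[E]_0$ if and only if $\{x,y\}\in E_0$. The key observation is that if $[y]$ is a neighbor of $[x]$ in $\widetilde{P}_0(G)$, then by Lemma~\ref{lato2} the representative $x$ itself is adjacent to $y$ in $P_0(G)$, so the neighborhood of $x$ in $P_0(G)$ already surjects (via $\pi$) onto the neighborhood of $[x]$; combined with tameness this should give the pseudo-covering condition in the precise sense of \cite[Definition~6.4]{bub}.

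The final clause, $c_0(G)=\widetilde{c}_0(G)$, I would deduce from the general transfer result in \cite{bub} asserting that a tame (or pseudo-covering) quotient preserves the number of components: surjectivity of $\pi$ guarantees every component of $\widetilde{P}_0(G)$ is hit, while tameness ensures that the full preimage of a single component of $\widetilde{P}_0(G)$ stays connected in $P_0(G)$, so $\pi$ induces a bijection $\mathcal{C}_0(G)\to\widetilde{\mathcal{C}}_0(G)$. The main obstacle, and the step requiring the most care, is matching my elementary verification of ``pseudo-covered'' to the exact formal definition cited from \cite{bub}: I must confirm that the local-surjectivity-type condition packaged in that definition is genuinely what the adjacency equivalence of Lemma~\ref{lato2} delivers, rather than a strictly stronger neighborhood condition. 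Once the correspondence with the \cite{bub} definitions is pinned down, both the pseudo-covering claim and the component-count equality follow by direct citation, so the proof is short modulo that bookkeeping.
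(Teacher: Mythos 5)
Your proposal is correct and follows essentially the same route as the paper: tameness is verified directly from $\langle x\rangle=\langle y\rangle$ forcing adjacency, the pseudo-covering property is read off from Lemma~\ref{lato2}, and the equality $c_0(G)=\widetilde{c}_0(G)$ is obtained by citing the transfer result for tame quotients in \cite{bub} (the paper invokes \cite[Proposition 3.2]{bub}). The definitional bookkeeping you flag as the main obstacle is exactly what the paper dispatches with the phrase ``immediate consequence of Lemma~\ref{lato2},'' so there is no substantive gap.
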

\begin{proof}  Let $x,y\in G_0$ such that $x\sim y$. Then $y=x^m$ for some $m\in \mathbb{N}$ and thus $\{x,y\}\in E.$ This shows that the quotient graph $\widetilde{\mathcal{P}}_0(G)$ is tame (\cite[Definition 3.1]{bub}). Then, \cite[Proposition 3.2]{bub} applies giving $c_0(G)=\widetilde{c}_0(G).$ The fact that the quotient is pseudo-covered (\cite[Definition 5.14]{bub}) is an immediate consequence of Lemma \ref{lato2}.
\end{proof}
\begin{lem}\label{component-quotient}  Let $\pi$ be the projection of $ \mathcal{P}_0(G)$ on $\widetilde{\mathcal{P}}_0(G) $.  Then,  the map from $\mathcal{C}_0 (G)$ to $\widetilde{\mathcal{C}}_0 (G) $ which associates, with every $C\in \mathcal{C}_0 (G),$ the component $\pi(C) $ is a bijection. Given $\widetilde{C}\in \widetilde{\mathcal{C}}_0 (G) $, the set  of vertices  of the unique $C\in \mathcal{C}_0 (G)$ such that $\pi(C)=\widetilde{C}$ is given by $\pi^{-1}(V_{\widetilde{C}})$.
\end{lem}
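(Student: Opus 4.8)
The plan is to exploit the fact, already established in Lemma~\ref{rmk:1}, that the projection $\pi\colon P_0(G)\to\widetilde{P}_0(G)$ is a \emph{tame} quotient homomorphism, together with the edge-correspondence of Lemma~\ref{lato2}. First I would observe that $\pi$ is surjective on vertices by construction, and that by Lemma~\ref{lato2} an edge $\{x,y\}\in E_0$ holds if and only if $\{[x],[y]\}\in[E]_0$; in particular $\pi$ carries edges to edges, so $\pi$ maps any path in $P_0(G)$ to a walk in $\widetilde{P}_0(G)$, and hence each component $C\in\mathcal{C}_0(G)$ has connected image $\pi(C)$, which therefore lies inside a single component of $\widetilde{P}_0(G)$. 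This shows $\pi(C)$ is contained in one component; I would then argue it equals a full component by the converse direction, using tameness.

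The core step is to show the induced map $C\mapsto\pi(C)$ is a bijection from $\mathcal{C}_0(G)$ to $\widetilde{\mathcal{C}}_0(G)$. For \emph{surjectivity}, given any $\widetilde{C}\in\widetilde{\mathcal{C}}_0(G)$ pick a vertex $[x]\in V_{\widetilde C}$; then $C_{P_0(G)}(x)$ maps into $\widetilde C$, and since $\widetilde C$ is connected every vertex $[y]\in V_{\widetilde C}$ is joined to $[x]$ by a path $[x]=[z_0],[z_1],\dots,[z_s]=[y]$, which by Lemma~\ref{lato2} lifts edge-by-edge to edges $\{z_{i},z_{i+1}\}\in E_0$, giving a path in $P_0(G)$ from $x$ to $y$; hence $y\in C_{P_0(G)}(x)$ and $\pi$ maps $C_{P_0(G)}(x)$ \emph{onto} $\widetilde C$. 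For \emph{injectivity}, suppose $\pi(C)=\pi(C')=\widetilde C$ for $C,C'\in\mathcal{C}_0(G)$; choosing $x\in V_C$ and $x'\in V_{C'}$ with $\pi(x)=[x]$, $\pi(x')=[x']$ both in $\widetilde C$, the connectivity of $\widetilde C$ and the lifting argument just given place $x$ and $x'$ in the same component of $P_0(G)$, forcing $C=C'$.

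Finally I would identify the fibre: for $\widetilde C\in\widetilde{\mathcal{C}}_0(G)$ let $C$ be the unique component with $\pi(C)=\widetilde C$. The inclusion $V_C\subseteq\pi^{-1}(V_{\widetilde C})$ is immediate since $\pi$ maps $V_C$ into $V_{\widetilde C}$. Conversely, if $x\in\pi^{-1}(V_{\widetilde C})$, then $[x]\in V_{\widetilde C}$, and by tameness (Lemma~\ref{rmk:1}) the entire fibre $\pi^{-1}([x])=[x]$ is contained in a single component of $P_0(G)$ — indeed any two elements $x,x'$ with $[x]=[x']$ satisfy $x\sim x'$, hence $\{x,x'\}\in E_0$, so they are adjacent and lie in the same component. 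Since $[x]\in V_{\widetilde C}=\pi(V_C)$, some element of the fibre lies in $C$, and adjacency forces $x\in V_C$ as well; thus $\pi^{-1}(V_{\widetilde C})\subseteq V_C$, giving the claimed equality $V_C=\pi^{-1}(V_{\widetilde C})$.

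I expect the main obstacle to be bookkeeping rather than depth: the crux is the path-lifting in both directions, which rests entirely on the biconditional of Lemma~\ref{lato2} so that connectivity is preserved and reflected under $\pi$. One must be careful in the fibre computation to use tameness to guarantee that distinct elements of $G_0$ projecting to the same vertex are connected in $P_0(G)$ (so whole fibres cannot be split between two components), as this is precisely what makes $\pi^{-1}(V_{\widetilde C})$ a union of full components rather than a mixture; everything else is a routine transfer of connectivity along the surjective edge-preserving map.
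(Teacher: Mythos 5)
Your proof is correct, but it follows a genuinely different route from the paper's. The paper's proof is two lines: by Lemma \ref{rmk:1} the projection $\pi$ is tame and pseudo-covering, and then the general result \cite[Corollary 5.13]{bub} (a tame pseudo-covering homomorphism induces a bijection on components, with fibres given by the preimages of the vertex sets) is applied to $\pi$. You instead give a self-contained argument built directly on the two ingredients that the paper feeds into this machinery: the biconditional of Lemma \ref{lato2} and tameness. The point you exploit, correctly, is that Lemma \ref{lato2} holds for \emph{every} pair of representatives, so a path in $\widetilde{P}_0(G)$ lifts to a path in $P_0(G)$ with \emph{prescribed} endpoints; for a general pseudo-covering such lifting fails (edges can only be lifted from some preimage vertex, and consecutive lifted edges need not match up at a common representative), which is exactly why the general theory in \cite{bub} needs tameness for gluing, whereas you need tameness only in the fibre computation, to keep each class $[x]$ inside a single component of $P_0(G)$. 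What the paper's route buys is brevity and coherence with the framework of \cite{bub} that is reused throughout the paper; what your route buys is a proof readable without consulting \cite{bub}, and it makes visible that this particular quotient is a strong homomorphism in the sense of Lemmas \ref{lato} and \ref{lato2}, a property strictly stronger than pseudo-covering. One cosmetic remark: your surjectivity argument identifies $\pi(C)$ with a component at the level of vertex sets; to see that $\pi(C)$ literally equals that component as a graph, one applies Lemma \ref{lato2} once more to representatives chosen inside $C$, but this is immaterial for the bijection and the fibre description you are asked to prove.
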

\begin{proof} By Lemma \ref{rmk:1}, $\pi$ is tame and pseudo-covering. Thus we apply \cite[Corollary 5.13]{bub} to $\pi$.
\end{proof}
In the recent paper \cite{IS} the second and third author have investigated the groups $G$ for which $\widetilde{\mathcal{P}}(G)$ is a tree and for which $\widetilde{\mathcal{P}}_0(G)$ is a path or a bipartite graph.
\vskip 0.6 true cm
\section{\bf Order graphs}\label{order-sec}
\vskip 0.4 true cm
Let $O(G)=\{ o(g): g\in G \}$. The map $o:G\rightarrow O(G)$, associating to every $x\in G$ its order $o(x)$, is called the {\it order map} on $G$. We say that $m\in \mathbb{N}$ is a proper divisor of $n\in \mathbb{N}$ if $m\mid n$ and $m\notin\{1,n\}$.

\begin{defn}\label{O}{\rm The {\it order graph} of $G$ is the graph $\mathcal{O}(G)$ with vertex set $O(G)$ and edge set $E_{\mathcal{ O}(G)}$ where, for every $m,n\in O(G)$, $\{m,n\}\in E_{\mathcal{ O}(G)}$ if $m\mid n$ or $n\mid m$.
The {\it proper order graph}  $\mathcal {O}_0(G)$  is defined as the $1$-deleted graph of $\mathcal{O}(G).$ Its vertex set is then $O_0(G)=O(G)\setminus \{1\}.$}
\end{defn}

Note that $\{m,n\}\in E^*_{\mathcal{ O}_0(G)}$ only if one of $m$ and $n$ is a proper divisor of the other.
Clearly $\mathcal{O}(G)$  is always connected and it is $2$-connected if and only if $\mathcal {O}_0(G)$  is connected.
\begin{prop}\label{order}  Let $G$ be a group. The order map defines a complete homomorphism
$o :\mathcal{P}(G)\rightarrow \mathcal{O}(G)$ which induces a complete homomorphism $o:\mathcal{P}_0(G)\rightarrow \mathcal{O}_0(G)$, and a complete $2$-homomorphism
$\widetilde{o}:\widetilde{\mathcal{P}}_0(G)\rightarrow \mathcal{O}_0(G).$
If $G$ is cyclic, $\widetilde{o}$ is an isomorphism.
\end{prop}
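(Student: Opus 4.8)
The plan is to verify each assertion in turn, starting from the map on the full graphs and then descending to the proper and quotient versions by means of Lemma \ref{cutgraph-hom}. First I would check that $o\colon P(G)\to\mathcal{O}(G)$ is a homomorphism: if $\{x,y\}\in E$ then, say, $x=y^m$, whence $x^{o(y)}=1$ and so $o(x)\mid o(y)$, giving $\{o(x),o(y)\}\in E_{\mathcal{O}(G)}$. Surjectivity on vertices is immediate from $O(G)=\{o(g):g\in G\}$. For completeness I would show every edge of $\mathcal{O}(G)$ is attained: given $\{m,n\}\in E^*_{\mathcal{O}(G)}$ with $m\mid n$, pick $y\in G$ with $o(y)=n$ and set $x=y^{n/m}$, so that $o(x)=m$ and the edge $\{x,y\}\in E$ maps onto $\{m,n\}$; loops are covered since the loop at $m=o(x)$ is the image of the loop at $x$.

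Next I would pass to the proper graphs. Since the identity is the only element of order one, $o^{-1}(1)=\{1\}$, so $o(1)=1$ is the deleted vertex on both sides and Lemma \ref{cutgraph-hom}(i) yields a complete homomorphism $o\colon P_0(G)\to\mathcal{O}_0(G)$. To obtain $\widetilde{o}$ I would invoke Lemma \ref{cutgraph-hom}(ii) with the equivalence $\sim$: if $x\sim y$ then $\langle x\rangle=\langle y\rangle$, hence $o(x)=o(y)$, so $\sim$ refines the fibres of $o$. Thus $\widetilde{o}\colon\widetilde{P}_0(G)\to\mathcal{O}_0(G)$ is a well-defined complete homomorphism with $\widetilde{o}\circ\pi=o$.

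The one genuinely new point is that $\widetilde{o}$ is a $2$-homomorphism, and this is where the quotient is essential: the map $o$ on $P_0(G)$ is \emph{not} a $2$-homomorphism, since two distinct generators of the same cyclic subgroup form a proper edge of $P_0(G)$ whose image is a loop. I would take a proper edge $\{[x],[y]\}\in E^*_{\widetilde{P}_0(G)}$, so that $\langle x\rangle\neq\langle y\rangle$ and, say, $x=y^m$; then $\langle x\rangle\subsetneq\langle y\rangle$, whence $o(x)\mid o(y)$ and $o(x)\neq o(y)$, as equal orders together with $\langle x\rangle\subseteq\langle y\rangle$ would force equality of the subgroups. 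Thus $o(x)$ is a proper divisor of $o(y)$ and $\widetilde{o}(\{[x],[y]\})=\{o(x),o(y)\}\in E^*_{\mathcal{O}_0(G)}$, as required.

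Finally, for cyclic $G$ I would show $\widetilde{o}$ is an isomorphism. Here the cyclic subgroups of $G$ are in bijection with the divisors of $|G|$, one subgroup per divisor, so distinct vertices $[x]\neq[y]$ of $\widetilde{P}_0(G)$ have distinct orders; combined with the surjectivity coming from completeness, this makes $\widetilde{o}$ a bijection on vertices. To see that it reflects edges, if $\{o(x),o(y)\}\in E^*_{\mathcal{O}_0(G)}$ with, say, $o(x)\mid o(y)$, then the unique subgroup of order $o(x)$ lies in the unique subgroup of order $o(y)$, so $\langle x\rangle\subseteq\langle y\rangle$ and $\{x,y\}\in E$, giving $\{[x],[y]\}\in[E]_0$. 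I expect the main (though still modest) obstacle to be precisely the $2$-homomorphism step: keeping track of why identifying same-order powers in the quotient removes exactly those proper edges that would otherwise collapse to loops.
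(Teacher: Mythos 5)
Your proof is correct and follows essentially the same route as the paper: establish that $o$ is a complete homomorphism on the full graphs (your explicit choice $x=y^{n/m}$ replaces the paper's appeal to the existence in $\langle y\rangle$ of elements of every order dividing $o(y)$), descend to $P_0(G)$ and $\widetilde{P}_0(G)$ via Lemma \ref{cutgraph-hom}, prove the $2$-homomorphism property by noting that a power relation together with equal orders forces equality of the cyclic subgroups, and use the uniqueness of subgroups of each order in a cyclic group for injectivity. The only cosmetic differences are that the paper runs the $2$-homomorphism step by contradiction rather than directly, and in the cyclic case deduces the isomorphism from injectivity plus completeness instead of verifying edge reflection explicitly as you do.
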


\begin{proof}  For every $m\in \mathbb{N}$,  $o(x^m)$ is a divisor of $o(x),$ so  $o$ is a surjective homomorphism. We show that $o$ is complete.  Let $e=\{o(x),o(y)\}\in E_{\mathcal{ O}(G)},$ for some $x,y\in G$. Then, without loss of generality, we may assume that $o(y)\mid o(x).$ Since in $\langle x\rangle$ there exist elements of each order dividing $o(x),$ there exists $\overline{y}\in\langle x\rangle$ with $o(\overline{y})=o(y)$. Let $m\in\mathbb{N}$ be such that  $\overline{y}=x^m.$ Then $\{x,\overline{y}\}\in E$ and $o(\{x,\overline{y}\})=e.$ Now, since $o(x)=1$ if and only if $x=1$, Lemma \ref{cutgraph-hom} applies giving the desired result both for the vertex deleted graph  and  for the quotient graph. We are left to check that $\widetilde{o}$ is a $2$-homomorphism. Let $\{[x],[y]\}\in [E]^*_0$ and show that $\widetilde{o}([x])\neq\widetilde{o}([y])$. Assume the contrary, that is, $o(x)=o(y).$ By Lemma \ref{lato2}, we have $\{x,y\}\in E_0$ so that $x$ and $y$  are one the power of the other. It follows that $\langle x\rangle=\langle y\rangle$, against $[x]\neq [y].$

Finally let $G$ be cyclic. To prove that $\widetilde{o}$ is an isomorphism, it is enough to show that $\widetilde{o}$ is injective. Assume that for some $[x],[y] \in [G]_0$ we have $\widetilde{o}([x])=\widetilde{o}([y])$, that is, $o(x)=o(y)=m.$ Since in a cyclic group there exists exactly one subgroup for each $m\mid |G|,$ we deduce that $\langle x\rangle= \langle y\rangle$ and so $[x]=[y].$
\end{proof}
An application of \cite[Lemma 4.3]{bub} gives, in particular, the following result.
\begin{cor}\label{quotient-order} For each finite group $G$, the graph $\mathcal {O}_0(G)$ is a quotient of the graph $\widetilde{\mathcal{P}}_0(G).$
\end{cor}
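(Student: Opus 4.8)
The plan is to realize $\mathcal{O}_0(G)$ as the quotient of $\widetilde{P}_0(G)$ induced by the order map $\widetilde{o}$, using \cite[Lemma 4.3]{bub} to upgrade the homomorphism $\widetilde{o}$ into a genuine quotient identification. Recall from Proposition \ref{order} that the order map induces a complete $2$-homomorphism $\widetilde{o}:\widetilde{P}_0(G)\rightarrow \mathcal{O}_0(G)$; in particular $\widetilde{o}$ is a complete homomorphism of graphs.

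First I would check that $\widetilde{o}$ is surjective on vertices. The vertex set of $\mathcal{O}_0(G)$ is $O_0(G)=O(G)\setminus\{1\}$, and since $o(g)=1$ exactly when $g=1$, every element of $O_0(G)$ has the form $o(x)$ for some $x\in G_0$. As $\widetilde{o}([x])=o(x)$ by construction, this shows that $\widetilde{o}$ maps $[G]_0$ onto $O_0(G)$, so $\widetilde{o}$ is surjective.

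Next I would introduce the equivalence relation $\approx$ on $[G]_0$ that identifies $[x]$ with $[y]$ whenever $\widetilde{o}([x])=\widetilde{o}([y])$, that is, whenever $o(x)=o(y)$. The quotient graph $\widetilde{P}_0(G)/\hspace{-1mm}\approx$ then has the $\approx$-classes as vertices and, by definition of quotient graph, has as edges exactly the projections of the edges of $\widetilde{P}_0(G)$. The heart of the matter is that $\widetilde{o}$, being a complete surjective homomorphism, realizes $\mathcal{O}_0(G)$ as precisely this quotient: the homomorphism property sends edges to edges, completeness guarantees conversely that every edge of $\mathcal{O}_0(G)$ is the image of some edge of $\widetilde{P}_0(G)$, and surjectivity on vertices makes the natural correspondence between $O_0(G)$ and the $\approx$-classes a bijection. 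Thus this correspondence carries the edge set of $\mathcal{O}_0(G)$ onto that of $\widetilde{P}_0(G)/\hspace{-1mm}\approx$, giving a graph isomorphism. This is exactly the content of \cite[Lemma 4.3]{bub}, applied to the complete surjective homomorphism $\widetilde{o}$.

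I expect the only point requiring genuine care to be the verification that $\widetilde{o}$ meets the two hypotheses of \cite[Lemma 4.3]{bub}, namely surjectivity and completeness. Surjectivity is the short computation above, while completeness is already recorded in Proposition \ref{order}; once both are in hand, the conclusion that $\mathcal{O}_0(G)$ is a quotient of $\widetilde{P}_0(G)$ follows at once from that lemma.
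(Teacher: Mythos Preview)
Your proof is correct and follows essentially the same route as the paper: the corollary is stated immediately after Proposition~\ref{order} with the remark that it follows by applying \cite[Lemma~4.3]{bub} to the complete homomorphism~$\widetilde{o}$. Your separate verification of surjectivity is harmless but redundant, since completeness of~$\widetilde{o}$ (recorded in Proposition~\ref{order}) already includes surjectivity on vertices.
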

 We exhibit now an example showing that, in general, $\widetilde{o}$ is not pseudo-covering.
  \begin{ex} \label{no-fusion}{\rm Let $G$ be the $2$-Sylow subgroup of $S_4$   given by
 \[G=\{id, (1\ 3), (2\ 4), (1\ 3)(2\ 4), (1\ 2)(3\ 4), (1\ 4)(2\ 3), (1\ 2\ 3\ 4), (1\ 4\ 3\ 2)\}.\]
Then $\mathcal{O}_0(G)$ is reduced to a path of length $1$ between the only two vertices $2$ and $4,$ while $\widetilde{\mathcal{P}}_0(G)$ has $6$ vertices and $5$ components because  the vertices $[(1\ 3)],$ $ [(2\ 4)]$, $ [(1\ 2)(3\ 4)],$ $[(1\ 4)(2\ 3)]$ are isolated while $\{[(1\ 2\ 3\ 4)], [(1\ 3)(2\ 4)]\}$ is an edge. $\widetilde{o}$ takes the component having as only vertex $[(1\ 3)]$ onto the subgraph $(2,\{2\})$, which is not a component of $\mathcal{O}_0(G)$. By \cite[Theorem A\,(i)]{bub}, this guarantees that $\widetilde{o}$ is not pseudo-covering. }
 \end{ex}

The above example indicates that the reduction of complexity obtained passing from the proper power graph to the proper order graph, is usually too strong. For instance, if $G$ is the group  in the previous example and $C_4$ the cyclic group of order $4$, we have $\mathcal{O}_0(G)\cong\mathcal{O}_0(C_4).$ In particular we cannot hope, in general, to count the components of  $\widetilde{\mathcal{P}}_0(G)$ relying on those of $\mathcal{O}_0(G).$ Anyway, taking into account the graph $\mathcal{O}_0(G)$, we get useful information on the isolated vertices of  $\widetilde{\mathcal{P}}_0(G)$.
\begin{lem}\label{isolated-order} Let $x\in G_0$.
\begin{itemize}
\item[(i)] If $o(x)\in O_0(G)$ is isolated in $\mathcal{O}_0(G)$, then $[x]$ is isolated in $\widetilde{\mathcal{P}}_0(G).$
\item[(ii)] If $[x]$ is isolated in $\widetilde{\mathcal{P}}_0(G),$ then $o(x)$ is a prime and the component of $\mathcal{P}_0(G)$ containing $x$ is a complete graph on $p-1$ vertices.
\item[(iii)] $c_0(\mathcal{O}(G))\leq |\{p\in P: p\mid |G|\}|.$
\end{itemize}
\end{lem}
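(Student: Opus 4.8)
For (i), the cleanest route is to invoke the $2$-homomorphism already at hand. By Proposition \ref{order}, $\widetilde{o}:\widetilde{P}_0(G)\to\mathcal{O}_0(G)$ is a $2$-homomorphism with $\widetilde{o}([x])=o(x)$. Thus (i) is nothing but Lemma \ref{2hom-iso} applied to $\varphi=\widetilde{o}$ and the vertex $[x]$: if its image $o(x)$ is isolated in $\mathcal{O}_0(G)$, then $[x]$ is isolated in $\widetilde{P}_0(G)$.

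For (ii), I would first prove that $o(x)$ is prime by contraposition. Suppose $o(x)=n$ is composite. Then $n$ has a proper divisor $d$ with $1<d<n$, and since $\langle x\rangle$ is cyclic there is $y\in\langle x\rangle$ with $o(y)=d$. Then $y$ is a power of $x$, so $\{x,y\}\in E_0$; and since $o(y)=d\neq n=o(x)$ we have $\langle y\rangle\neq\langle x\rangle$, i.e.\ $[y]\neq[x]$. By Lemma \ref{lato2} this yields the proper edge $\{[x],[y]\}\in[E]_0$, so $[x]$ is not isolated, a contradiction. Hence $o(x)=p$ is prime. To describe the component of $P_0(G)$ through $x$, I would use Lemma \ref{component-quotient}: since $[x]$ is an isolated vertex, its component in $\widetilde{P}_0(G)$ is $\{[x]\}$, and the corresponding component $C$ of $P_0(G)$ has vertex set $\pi^{-1}(\{[x]\})=[x]$. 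This set consists of the $\phi(p)=p-1$ generators of $\langle x\rangle$; as $\langle x\rangle$ has prime order these are precisely all its non-identity elements, and any two of them are powers of each other, hence adjacent in $P_0(G)$. Therefore $C$ is the complete graph on $p-1$ vertices.

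For (iii), I would argue that every component of $\mathcal{O}_0(G)$ contains a prime vertex dividing $|G|$. Take any vertex $n$ of $\mathcal{O}_0(G)$; then $n=o(g)>1$ for some $g\in G$, so $n$ has a prime divisor $p$, and $g^{n/p}$ has order $p$, whence $p\in O_0(G)$ is a vertex. If $p=n$, then $n$ itself is prime; if $p<n$, then $p$ is a proper divisor of $n$ and $\{p,n\}$ is an edge, so $p$ lies in the same component as $n$. In either case the component of $n$ contains the prime vertex $p$, and $p\mid n\mid|G|$ by Lagrange. Since distinct components contain distinct prime vertices, the number $c_0(\mathcal{O}(G))$ of components is at most the number of prime vertices, which is bounded by $|\{p\in P:p\mid|G|\}|$.

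None of the three parts presents a serious obstacle; the only point demanding care is the identification in (ii) of the $P_0(G)$-component through $x$, where one must combine Lemma \ref{component-quotient} (to pin the vertex set down to the class $[x]$) with the observation that the generators of a cyclic group of prime order are pairwise adjacent in the power graph.
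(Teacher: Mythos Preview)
Your proof is correct and follows essentially the same route as the paper: part (i) is identical (Lemma~\ref{2hom-iso} applied to $\widetilde{o}$), the primality argument in (ii) and the prime-vertex argument in (iii) match the paper's almost verbatim. The only cosmetic difference is that for the component description in (ii) you invoke Lemma~\ref{component-quotient} to identify $V_C=\pi^{-1}(\{[x]\})$, whereas the paper appeals directly to \cite[Theorem~A(i)]{bub} for the pseudo-covering $\pi$; since Lemma~\ref{component-quotient} is precisely the in-paper packaging of that external result, the two arguments are equivalent.
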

\begin{proof} (i) Apply Lemma \ref{2hom-iso} to the $2$-homomorphism $\widetilde{o}$ of Proposition \ref{order}.

(ii) Let $[x]$ be isolated in $\widetilde{\mathcal{P}}_0(G)$. We first show that $o(x)=p$, for some prime $p$.
 Assume, by contradiction, that $o(x)$ is composite. Then there exists $k\in\mathbb{N}$ such that $1\neq \langle x^k\rangle\neq \langle x\rangle$ and so $\{[x],[x^k]\}\in [E]^*_0$, contradicting $[x]$ isolated. Let next $C=C_{\mathcal{P}_0(G)}(x)$. By \cite[Theorem A\,(i)]{bub} applied to the pseudo-covering projection $\pi: \mathcal{P}_0(G)\rightarrow \widetilde{\mathcal{P}}_0(G)$, we have that $\pi(C)=C_{\widetilde{\mathcal{P}}_0(G)}([x])$ and thus $\pi(C)$ is reduced to the vertex $[x]$.
So, if $x'\in V_C$ we have that $[x']=[x]$ that is $\langle x'\rangle=\langle x \rangle$. It follows that $V_C$ is the set of generators of the cyclic group $\langle x\rangle$ of order $p$. Thus $|V_C|=p-1$ and $C$ is a complete graph on $p-1$ vertices.

(iii) Let $m\in O_0(G)$. If $p$ is a prime dividing $m$, then $p\in O_0(G)$ and $p\mid |G|.$ Moreover,
 $\{m,p\}\in E_{\mathcal{O}_0(G)}$ so that $m\in C_{\mathcal{O}_0(G)}(p).$
\end{proof}

\vskip 0.6 true cm
\section{\bf {\bf \em{\bf Quotient graphs of power graphs associated with permutation groups}}}\label{permutation}
\vskip 0.4 true cm
Let $G\leq S_n$ be a permutation group  of degree $n\in\mathbb{N},n\geq 2$ acting naturally on $N=\{1,\dots,n\}$.
We want  to  determine $c_0(G)$ from $X=\widetilde{\mathcal{P}}_0(G)$ by suitable quotients. We need to find a graph $Y$ and a homomorphism $f\in \mathrm{PC}(X,Y)=\mathrm{LSur}(X,Y)\cap \mathrm{Com}(X,Y)$ to which to apply  Formula $(1.1)$ in \cite[Theorem A]{bub}  or, better,  a homomorphism $f\in \mathrm{O}(X,Y)\cap \mathrm{Com}(X,Y)$ to which we can apply the Procedure \cite[6.10]{bub}.
 To start with, we need to associate to every permutation an arithmetic object.
\subsection{\bf {\bf \em{\bf Partitions }}}\label{partition}
\vskip 0.4 true cm
Let $n,r\in \mathbb{N},$  with $n\geq r.$ An
$r$-{\em partition} of $n$ is an unordered $r$-tuple
$T=[x_1,\dots,x_r]$, with $x_i\in\mathbb{N}$ for every $i\in
\{1,\dots,r\},$ such that $n=\sum_{i=1}^{r}x_i.$
The $x_i$ are called the {\em terms} of the partition; the  {\it order} of $T$  is defined by $o(T)=\lcm\{x_i\}_{i=1}^r.$
We denote by $\mathfrak{T}_r(n)$  the set of  the
$r$-partitions of $n$  and we call each element in  $\mathfrak{T}(n)=\bigcup_{r=1}^n \mathfrak{T}_r(n)$ a {\em partition} of $n.$
Given $T\in \mathfrak{T}(n)$, let $m_1<\dots < m_k$ be its $k\in \mathbb{N}$ distinct terms; if $m_j$ appears $t_j\geq 1$ times in $T$ we use the notation $T=[m_1^{t_1},..., m_k^{t_k}]$. Moreover, we say that $[m_1^{t_1},..., m_k^{t_k}]$ is
the {\it normal form} of $T$ and that $t_j$ is the {\it multiplicity} of the term $m_j$.  We will accept, in some occasions,  the multiplicity $t_j=0$ simply  to say that a certain natural number $m_j$ does not appear as a  term in $T.$  We usually do not write the multiplicities equal to $ 1$. For instance the partition $[1,1,3]$ can be written $[1^2,3^1]$  or $[1^2,3]$ or $[1^2,2^0,3]$. The partition $[1^n]$ is called the {\it trivial partition}. We put $\mathfrak{T}_0(n)=\mathfrak{T}(n)\setminus \{[1^n]\}$.
\subsection{\bf {\bf \em{\bf Types of permutations }}}\label{types}
\vskip 0.4 true cm
Let $\psi\in S_n.$ The {\sl type}  of  $\psi$ is the partition of $n$ given by  the unordered list
$T_{\psi}=[x_1,...,x_r]$ of the sizes $x_i$  of the $r\in\mathbb{N}$ orbits  of $\psi$ on $N.$ Note that the fixed points of $\psi$ correspond to the terms $x_i=1,$ while the lengths of the disjoint cycles in which $\psi$ uniquely splits are given by the terms $x_i\geq 2.$ For instance  $(1\ 2\  3)\in S_3$ has type $[3]$, while $(1\ 2\  3)\in S_4$ has type $[1,3].$ Note also that $T_{id}=[1^n]$. For $\psi \in S_n$,  we denote by $M_{\psi}=\{i\in N :\psi(i)\neq i\}$ the support of $\psi$. Thus $|M_{\psi}|$ is the sum of the terms different form $1$ in $T_{\psi}.$
The permutations of type $[1^{n-k},k]$, for some $k\geq 2$, are  the $k$-cycles; the $2$-cycles are also called transpositions. Note that for every $\psi\in S_n$ and $s\in\mathbb{N},$ $T_{\psi}=T_{\psi^s}$ if and only if $\langle \psi\rangle =\langle \psi^s\rangle. $ Note also that $o(T_{\psi})=o(\psi).$
Recall that $\psi,  \varphi\in S_n$ are conjugate in $S_n$ if and only if $T_{\psi}=T_{\varphi}$. The map $t:S_n\rightarrow \mathfrak{T}(n)$, defined by $t(\psi)=T_{\psi}$  for all $\psi\in S_n$, is surjective. In other words, each partition of $n$ may be viewed as the type of some permutation in $S_n$. We call $t$ the {\it type map}. If $X\subseteq S_n$, then $t(X)$ is the set of types {\it admissible} for $X$ in the sense of \cite[Section 4.1]{bub}, and it is denoted by $\mathfrak{T}(X)$.  For $T\in\mathfrak{T}(n)$, we denote by  $\mu_T(G)$ the number of permutations of type $T$ in $G\leq S_n$. If the normal form of $T$ is $[m_1^{t_1},..., m_k^{t_k}]$, then it is well known that

\begin{equation}\label{count}\mu_T(S_n)=\dfrac{n!}{m_1^{t_1}\cdots m_k^{t_k}t_1!\cdots t_k!}.
\end{equation}

\subsection{\bf {\bf \em{\bf Powers of partitions }}}\label{powers-partition}
\vskip 0.4 true cm

 Given $T=[m_1^{t_1},..., m_k^{t_k}]\in\mathfrak{T}(n)$ in normal form,  the {\it power} of $T$ of exponent $a\in \mathbb{N}$, is defined as the partition
 \begin{equation}\label{power} T^a=\left[\left(\frac{m_1}{\gcd(a,m_1)}\right)^{t_1\gcd(a,m_1)},..., \left(\frac{m_k}{\gcd(a,m_k)}\right)^{t_k\gcd(a,m_k)}\right].
\end{equation}
Note that  $T^a$ is not necessarily in normal form. Moreover, for each $\psi\in S_n$ and each $a\in\mathbb{N},$ we have $T_{\psi^a}=(T_{\psi})^a.$ As a consequence,
the power notation for partitions is consistent with a typical property of the powers: if $a, b\in \mathbb{N},$ then $(T^a)^b=T^{ab}=T^{ba}=(T^b)^a.$ We say that $T^a$ is a {\it proper power} of $T$ if $[1^n]\neq T^a\neq T.$
Throughout the section, we will use the notation in \eqref{power} without further reference.

\begin{lem}\label{trivial-power} Let $T\in \mathfrak{T}(n)$ and $a\in \mathbb{N}$.
\begin{itemize}
\item[(i)] $T^a=T$ if and only if $\gcd(a, o(T))=1.$
\item[(ii)] $T^a=[1^n]$ if and only if $o(T)\mid a.$
\item[(iii)] $T^a$ is a proper power of $T$  if and only if $\gcd(a, o(T))$ is a proper divisor of $o(T).$
\item[(iv)] If $T^a$ is a proper power of $T$, then $o(T^a)$ is a proper divisor of $o(T)$.
\end{itemize}
\end{lem}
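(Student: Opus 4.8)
The plan is to reduce every statement to an elementary fact about a single cyclic group, exploiting the dictionary between partitions and permutations set up earlier in this subsection. Since the type map $t\colon S_n\to\mathcal{T}(n)$ is surjective, I would first fix a permutation $\psi\in S_n$ with $T_\psi=T$. Then $o(\psi)=o(T_\psi)=o(T)$, and for every $a\in\mathbb{N}$ the identity $T_{\psi^a}=(T_\psi)^a=T^a$ allows me to replace any assertion about $T^a$ by the corresponding assertion about $\langle\psi\rangle$.

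For (i) I would use the equivalence recorded above that $T_{\psi^a}=T_\psi$ if and only if $\langle\psi^a\rangle=\langle\psi\rangle$, together with the standard fact that in a cyclic group of order $m=o(\psi)$ one has $\langle\psi^a\rangle=\langle\psi\rangle$ exactly when $\gcd(a,m)=1$; since $m=o(T)$, this yields (i) at once. For (ii), note that $[1^n]$ is the type of the identity and of no other permutation, so $T^a=[1^n]$ is equivalent to $\psi^a=\mathrm{id}$, i.e. to $o(\psi)=o(T)$ dividing $a$. Part (iii) is then purely formal: by definition $T^a$ is a proper power of $T$ precisely when $[1^n]\neq T^a\neq T$; parts (i) and (ii) translate these two conditions into $\gcd(a,o(T))\neq 1$ and $\gcd(a,o(T))\neq o(T)$, and since $\gcd(a,o(T))$ always divides $o(T)$, their conjunction says exactly that $\gcd(a,o(T))$ is a proper divisor of $o(T)$.

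For (iv) I would again pass through $\psi$: from $T^a=T_{\psi^a}$ and $o(T_{\psi^a})=o(\psi^a)$ I obtain $o(T^a)=o(\psi)/\gcd(a,o(\psi))=o(T)/d$, where $d=\gcd(a,o(T))$. Assuming $T^a$ is a proper power, part (iii) gives that $d$ is a proper divisor of $o(T)$, hence $1<d<o(T)$, whence $1<o(T)/d<o(T)$ and $o(T)/d\mid o(T)$; thus $o(T^a)$ is a proper divisor of $o(T)$.

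I do not anticipate a serious obstacle: the only point requiring care is to avoid a direct combinatorial attack on (i), where deducing $T^a\neq T$ from $\gcd(a,o(T))>1$ would force one to rule out accidental cancellation among terms of different length (a downward induction on the distinct terms $m_k>\dots>m_1$, using that lengths can only shrink under the power operation). The translation to $\langle\psi\rangle$ sidesteps this entirely, so the main thing left to verify is simply that the three elementary cyclic-group facts I invoke are exactly the ones already available in the text.
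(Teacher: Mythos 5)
Your proof is correct, and every ingredient you invoke is available in the text: surjectivity of the type map, the identities $T_{\psi^a}=(T_\psi)^a$ and $o(T_\psi)=o(\psi)$, and the equivalence $T_\psi=T_{\psi^s}\Leftrightarrow\langle\psi\rangle=\langle\psi^s\rangle$ are all recorded in Section \ref{types}, so the reduction to cyclic-group facts is legitimate and non-circular. The route does differ from the paper's in parts (i) and (ii): the paper argues directly on the defining formula \eqref{power}, proving (i) by counting terms (if $T^a=T$ then $\sum_{i=1}^k t_i\gcd(a,m_i)=\sum_{i=1}^k t_i$, forcing $\gcd(a,m_i)=1$ for every $i$, hence $\gcd(a,o(T))=1$; the converse is immediate term by term), and proving (ii) by observing that $T^a=[1^n]$ means $m_i/\gcd(a,m_i)=1$, i.e. $m_i\mid a$, for all $i$, which is exactly $o(T)\mid a$. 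Your versions of (iii) and (iv) coincide with the paper's (the paper also proves (iv) by choosing $\psi$ with $T=T_\psi$ and working with $o(\psi^a)$). What your uniform reduction buys is a single mechanism for the whole lemma and no manipulation of multiplicities; what the paper's approach buys is that (i) and (ii) remain pure partition arithmetic, independent of the permutation dictionary. One remark: your closing worry about ``accidental cancellation'' in a direct attack on (i) is unfounded --- the paper's one-line term count (each $\gcd(a,m_i)>1$ strictly increases the number of terms) disposes of exactly that issue, so the combinatorial route is no harder than the group-theoretic one.
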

\begin{proof}
(i) Let $T^a=T$. Then the number of terms in $T^a$ and in $T$ is the same, that is,  $\sum_{i=1}^kt_i\gcd(a,m_i)=\sum_{i=1}^kt_i$, which implies $\gcd(a,m_i)=1$ for all $i\in\{1,\dots,k\}$ and thus $\gcd(a, \lcm\{m_i\}_{i=1}^k)=\gcd(a, o(T))=1.$ Conversely, if $\gcd(a, o(T))=1,$ then $\gcd(a, \lcm\{m_i\}_{i=1}^k)=1$ and so $\frac{m_i}{\gcd(a,m_i)}=m_i,$ for all $i\in\{1,\dots,k\}$ so that $T^a=T$.

 (ii) $T^a=[1^n]$ is equivalent to $\frac{m_i}{\gcd(a,m_i)}=1$, and thus to $m_i\mid a$  for all $i\in\{1,\dots,k\}$, that is to $o(T)=\lcm\{m_i\}_{i=1}^k\mid a.$

 (iii) It follows by the definition of proper power and by (i) and (ii).

 (iv) Assume that $T^a$ is a proper power of $T$. Then, by (iii), $\gcd(a, o(T))\neq 1,o(T).$
 Let $\psi\in S_n$ such that $T=T_{\psi}$. Then $T^a=T_{\psi^a}$ and $o(T^a)=o(\psi^a)\notin \{ 1,o(\psi)=o(T)\}$ is a proper divisor of  $o(T)$.
\end{proof}

\begin{lem}\label{prospli}
\noindent\begin{itemize}
\item[(i)]  Let $T\in\mathfrak{T}(n)$, $a\in\mathbb{N}$ and $T'=T^a$ be a proper power of $T.$ Then the following facts hold:
\begin{itemize}
\item[(a)] $T'$ admits at least one term with multiplicity at least $2$;
\item[(b)] if there exists a term $x$ of $T'$ appearing with multiplicity $1,$  then  $a$ is coprime with $x$ and $x$ is a term of $T.$
\end{itemize}
\item[(ii)]  Let $h\in\mathbb{N}$, with $1\leq h<n/2.$ Then there exists no type in $\mathfrak{T}(n)$ having $[h,n-k]$ or $[n]$ as a proper power.
\end{itemize}
\end{lem}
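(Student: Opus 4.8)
The plan is to prove part (i) by a direct analysis of the multiplicity exponents appearing in the definition \eqref{power} of $T^a$, and then to obtain part (ii) as an immediate consequence of (i)(a). Writing $T=[m_1^{t_1},\dots,m_k^{t_k}]$ in normal form, the value $\frac{m_i}{\gcd(a,m_i)}$ is contributed to $T^a$ with exponent $t_i\gcd(a,m_i)$. For (a), since $T'=T^a\neq T$, Lemma \ref{trivial-power}(i) gives $\gcd(a,o(T))\neq 1$; as $o(T)=\lcm\{m_i\}_{i=1}^k$, any prime $p$ dividing both $a$ and $o(T)$ divides some $m_i$, whence $\gcd(a,m_i)\geq p\geq 2$. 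The corresponding value $\frac{m_i}{\gcd(a,m_i)}$ is then contributed with exponent $t_i\gcd(a,m_i)\geq 2$, so it appears in $T'$ with multiplicity at least $2$, proving (a).

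For (b), I would read off the multiplicity of a term $x$ of $T'$ from \eqref{power}: in normal form it equals the sum of the $t_i\gcd(a,m_i)$ over those indices $i$ for which $\frac{m_i}{\gcd(a,m_i)}=x$. Each summand is a positive integer, so the multiplicity is $1$ exactly when a single index $j$ occurs and $t_j\gcd(a,m_j)=1$; this forces $t_j=1$ and $\gcd(a,m_j)=1$. Then $x=\frac{m_j}{\gcd(a,m_j)}=m_j$ is a term of $T$, and $\gcd(a,x)=\gcd(a,m_j)=1$, i.e. $a$ is coprime with $x$, as required.

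Part (ii) then drops out of the contrapositive of (a). The single-term partition $[n]$ has its unique term with multiplicity $1$; and since $1\leq h<n/2$ forces $h<n-h$, the partition $[h,n-h]$ has two distinct terms, each of multiplicity $1$. Neither partition has a term of multiplicity at least $2$, so by (a) neither can arise as a proper power $T^a$ of any $T\in\mathcal{T}(n)$.

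The verifications that $\gcd$ and $\lcm$ interact as claimed are routine; the one point requiring genuine care is the bookkeeping in (b), where a priori several distinct source terms $m_i$ of $T$ could collapse to the same value $x$ in $T^a$. The crux is precisely that multiplicity exactly $1$ rules this coincidence out and pins down a single source term $m_j$ with $\gcd(a,m_j)=1$; this is the step I expect to be the main obstacle, while everything else is immediate from \eqref{power} and Lemma \ref{trivial-power}.
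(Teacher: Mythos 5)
Your proposal is correct and follows essentially the same route as the paper's proof: part (a) via Lemma \ref{trivial-power}\,(i) producing an index $j$ with $\gcd(a,m_j)\geq 2$ and hence a term of multiplicity $t_j\gcd(a,m_j)\geq 2$, part (b) by forcing $t_j\gcd(a,m_j)=1$ for a single source index, and part (ii) from the contrapositive of (a) together with $h\neq n-h$. The only difference is expository: you spell out the prime-divisor justification in (a) and the possible collapsing of distinct terms $m_i$ onto the same value $x$ in (b), points the paper's terser proof leaves implicit.
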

\begin{proof} (i)(a) Since $T'=T^a$ is a proper power of $T,$ by  Lemma \ref{trivial-power},
there exists $j\in \{1,\dots,k\}$ with $\gcd(a,m_j)\geq 2.$ Thus the term $\frac{m_j}{\gcd(a,m_j)}$ in $T'$  has multiplicity at least $t_j\gcd(a,m_j)\geq 2.$
(b) If a term $x$ in $T'$ appears with multiplicity $1$, then there exists $j\in \{1,\dots,k\}$ with $t_j\gcd(a,m_j)=1$ and $x=\frac{m_j}{\gcd(a,m_j)}$.
It follows that  $t_j=1$ and $\gcd(a,m_j)=1$. Moreover $x=m_j$ is a term of $T$.

(ii) This is a consequence of (i) and of $k\neq n-k.$
\end{proof}

\subsection{\bf {\bf \em{\bf The type graph of a permutation group }}}\label{power type graph}
\vskip 0.4 true cm
\begin{defn}\label{type-graph}{\rm
Let $G\leq S_n$. We define the {\it  type graph} of $G$, as the graph $\mathcal{T}(G)$ having as vertex set $\mathfrak{T}(G)$ and  $\{T, T'\}\in E_{\mathcal{T}(G)}$ if $T, T'\in \mathfrak{T}(G)$  are one the power of the other.
 We define also the {\it  proper type graph} of $G$, as the $[1^n]$-deleted subgraph of  $\mathcal{T}(G)$ and denote it by $\mathcal{T}_0(G).$ Its vertex set is then $\mathfrak{T}_0(G)=\mathfrak{T}(G)\setminus \{[1^n]\}$ and its edge set is denoted by $E_{\mathcal{T}_0(G)}.$
 The set of its components is denoted by $\mathcal{C}_0(\mathcal{T}(G))$ and their number by $c_0(\mathcal{T}(G))$.}
 \end{defn}
 Note that
 $\{T, T'\}\in E^*_{\mathcal{T}_0(G)}$ if and only if $T, T'\in \mathfrak{T}_0(G)$  are one the  proper power of the other. Clearly $\mathcal{T}(G)$ is always connected and it is $2$-connected if and only if $\mathcal{T}_0(G)$ is connected. Since from now on we intend to focus on proper graphs, we will tacitly assume $n\geq 2.$

 \begin{prop}\label{tau}  Let $G\leq S_n$. Then the following facts hold:
  \begin{itemize}
 \item[(i)] the type map on $G$  induces a complete homomorphism
 $t: \mathcal{\mathcal{P}}_0(G)\rightarrow \mathcal{T}_0(G)$ and  a complete $2$-homomorphism
 $\widetilde{t}: \widetilde{\mathcal{\mathcal{P}}}_0(G)\rightarrow \mathcal{T}_0(G);$
\item[(ii)] $\mathcal{T}_0(G)$ is a quotient of $\widetilde{\mathcal{\mathcal{P}}}_0(G)$. In particular, if $\widetilde{\mathcal{P}}_0(G)$ is connected, then $\mathcal{T}_0(G)$ is connected too.
\end{itemize}
\end{prop}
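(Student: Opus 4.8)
The plan is to follow the pattern of the proof of Proposition~\ref{order}, replacing the order map by the type map and replacing the fact $o(x^m)\mid o(x)$ by the compatibility $T_{\psi^a}=(T_\psi)^a$ recorded in Section~\ref{powers-partition}. First I would check that $t$ is a complete homomorphism between the full graphs $P(G)$ and $P(\mathcal{T}(G))$, and only afterwards descend to the $1$-deleted graphs and to the quotient $\widetilde{P}_0(G)$ by invoking Lemma~\ref{cutgraph-hom}.

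\emph{Homomorphism and completeness.} If $\{x,y\}\in E$, then one of $x,y$ is a power of the other, say $x=y^m$, so $T_x=T_{y^m}=(T_y)^m$ and hence $\{T_x,T_y\}$ is an edge of $P(\mathcal{T}(G))$; thus $t$ is a homomorphism. Surjectivity on vertices is just the surjectivity of the type map onto $\mathcal{T}(G)$, which holds by the definition of $\mathcal{T}(G)$. The only point with real content is completeness, and here the situation is actually cleaner than for the order graph: given an edge $\{T,T'\}$ of $P(\mathcal{T}(G))$, say $T'=T^a$, I would pick $\psi\in G$ with $T_\psi=T$ and note that $\psi^a\in G$ (closure of $G$ under powers) realizes $T_{\psi^a}=(T_\psi)^a=T'$, while $\{\psi,\psi^a\}\in E$. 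This lifts the edge, so completeness is a direct consequence of group closure together with $T_{\psi^a}=(T_\psi)^a$.

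\emph{Descent to the proper and quotient graphs.} Since $T_\psi=[1^n]$ forces $\psi=1$, we have $t^{-1}([1^n])=\{1\}$, so Lemma~\ref{cutgraph-hom}(i) yields the complete homomorphism $t:P_0(G)\rightarrow P_0(\mathcal{T}(G))$. For the quotient, the hypothesis of Lemma~\ref{cutgraph-hom}(ii) holds because $\langle x\rangle=\langle y\rangle$ implies $T_x=T_y$ (the equivalence $T_\psi=T_{\psi^s}\iff\langle\psi\rangle=\langle\psi^s\rangle$ noted in Section~\ref{types}); this produces the complete homomorphism $\widetilde{t}:\widetilde{P}_0(G)\rightarrow P_0(\mathcal{T}(G))$. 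The step I expect to be the most delicate is verifying that $\widetilde{t}$ is a $2$-homomorphism, i.e. that it sends proper edges to proper edges rather than to loops, and I would argue exactly as in Proposition~\ref{order}: for a proper edge $\{[x],[y]\}\in[E]^*_0$, suppose $T_x=T_y$; by Lemma~\ref{lato2} we get $\{x,y\}\in E_0$, so (say) $y=x^m$, and then $T_x=T_{x^m}$ forces $\langle x\rangle=\langle x^m\rangle=\langle y\rangle$ by the same equivalence, contradicting $[x]\neq[y]$.

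\emph{Part (ii).} This is then formal: applying \cite[Lemma~4.3]{bub} to the complete, hence surjective, homomorphism $\widetilde{t}$ exhibits $P_0(\mathcal{T}(G))$ as a quotient of $\widetilde{P}_0(G)$, and since the surjective homomorphic image of a connected graph is connected (a path in the source maps to a walk covering the target), connectivity of $\widetilde{P}_0(G)$ transfers to $P_0(\mathcal{T}(G))$.
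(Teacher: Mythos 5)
Your proposal is correct and follows essentially the same route as the paper's proof: establish that $t$ is a complete homomorphism $P(G)\rightarrow P(\mathcal{T}(G))$ using $T_{\psi^a}=(T_\psi)^a$ and closure of $G$ under powers, descend via Lemma~\ref{cutgraph-hom}(i) and (ii), verify the $2$-homomorphism property through Lemma~\ref{lato2} and the equivalence $T_\psi=T_{\psi^s}\iff\langle\psi\rangle=\langle\psi^s\rangle$, and deduce (ii) from \cite[Lemma 4.3]{bub}. The only cosmetic differences are that you lift an edge $\{T,T^a\}$ by choosing a preimage $\psi$ of $T$ (the paper writes the edge as $\{t(\psi),t(\varphi)\}$ and lifts to $\{\varphi^a,\varphi\}$, the same idea) and that you argue connectivity directly rather than citing \cite[Proposition 3.2]{bub}.
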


\begin{proof} (i) We begin showing that the map $t:G\rightarrow \mathfrak{T}(G)$ defines a complete homomorphism   $t:\mathcal{P}(G)\rightarrow \mathcal{T}(G).$ Let $\{\psi, \varphi\}\in E$. Thus $\psi, \varphi\in G$ are one the power of the other; say, $\psi=\varphi^a$ for some $a\in \mathbb{N}.$ Then
 $t(\psi)=T_{\psi}=T_{\varphi^a}=(T_{\varphi})^a=t(\varphi)^a$ and thus $\{t(\psi), t(\varphi)\}\in E_{\mathcal{T}(G)}.$ This shows that $t$ is a homomorphism. We show that $t$ is complete, that is, $t(\mathcal{P}(G))=\mathcal{T}(G)$. Note that $t(G)=\mathfrak{T}(G)$ trivially holds  by definition of $\mathfrak{T}(G)$. Let $e=\{t(\psi), t(\varphi)\}\in  E_{\mathcal{T}(G)}$ so that $t(\psi), t(\varphi)$ are one the power of the other. Let, say, $t(\psi)= t(\varphi)^a$ for some $a\in \mathbb{N}.$ Thus we have $T_{\psi}=T_{\varphi^a}$. Now, define $\overline{\psi}=\varphi^a$ and note that $t(\overline{\psi})=t(\psi)$. Hence $\overline{e}=\{\overline{\psi}, \varphi\}\in E$ and $t(\overline{e})=e.$ Since the only permutation having type $[1^n]$ is $id$, by Lemma \ref{cutgraph-hom}\,(i), we also get a homomorphism $t: \mathcal{P}_0(G)\rightarrow \mathcal{T}_0(G).$ Moreover, applying
Lemma \ref{cutgraph-hom}\,(ii), $t$ also induces the complete homomorphism $\widetilde{t}:\widetilde{\mathcal{P}}(G)\rightarrow \mathcal{T}(G)$ defined by $\widetilde{t}([\psi])=T_\psi$ for all $[\psi]\in [G]$ and a corresponding complete homomorphism $\widetilde{t}: \widetilde{\mathcal{P}}_0(G)\rightarrow \mathcal{T}_0(G).$ It remains to show that $\widetilde{t}$ is a $2$-homomorphism. Let $\{[\psi], [\varphi]\}\in [E]^*_{0}$ and assume that $\widetilde{t}([\psi])=\widetilde{t}([\varphi])$. Then  $[\psi]\neq [\varphi]$ and, by Lemma \ref{lato2}, $\psi$ and  $\varphi$ are one the power of the other.  Moreover, we have $T_{\psi}=T_{\varphi}$. Hence $\langle \psi\rangle=\langle \varphi\rangle$, against $[\psi]\neq [\varphi]$.

(ii)   It follows by (i),  \cite[Lemma 4.3]{bub} and \cite[Proposition 3.2]{bub}.
\end{proof}
Let $G\leq S_n$. The homomorphism $\widetilde{t}$, defined in the above proposition, transfers to $[G]$ all the concepts introduced for $G$ in terms of type. In particular we define
 the type $T_{[\psi]}$ of $[\psi] \in [G]$,  by $T_{\psi}.$ Moreover, we say that  $[\psi]$ is a $k$-cycle (a transposition) if $\psi$ is $k$-cycle (a transposition).
For $X\subseteq S_n$  consider $[X]=\{[x]\in[S_n] :  x\in X\}\subseteq [S_n]$. Then, according to \cite[Section 4.1]{bub}, $\widetilde{t}([X])=t(X)=\mathfrak{T}(X)$
is  the set of  {\it types admissible} for $[X]$.
 Since every subset of  $[S_n]$ is given by $[X]$, for a suitable $X\subseteq S_n,$ that defines the concept of admissibility for all the subsets of $[S_n]$. If $\hat{X}$ is a subgraph of $ \widetilde{\mathcal{P}}_0(G)$ the set of types admissible for $\hat{X}$, denoted  by $ \mathfrak{T}(\hat{X})$,  is given by the set of types admissible for $V_{\hat{X}}.$ In particular, for $C\in \widetilde{\mathcal{C}}_0(G),$ we have $ \mathfrak{T}(C)=\{T\in \mathfrak{T}_0(G):\hbox{there exists}\  [\psi]\in V_C\  \hbox{with}\ T_{\psi}=T\}.$

It is useful to isolate a fact contained in Proposition \ref{tau}.
\begin{cor}\label{edge} Let $G\leq S_n$. If $\varphi,\psi\in G_0$ are such that $\{[\varphi],[\psi]\}\in[E]^*_0$, then one of $T_{\varphi}$ and $T_{\psi}\in\mathfrak{T}_0(G)$ is a proper power of the other. In particular $T_{\varphi}\neq T_{\psi}$.
\end{cor}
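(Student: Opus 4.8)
The plan is to read the statement off directly from Proposition \ref{tau}(i), which supplies the complete $2$-homomorphism $\widetilde{t}: \widetilde{P}_0(G)\rightarrow P_0(\mathcal{T}(G))$ satisfying $\widetilde{t}([\varphi])=T_{\varphi}$ and $\widetilde{t}([\psi])=T_{\psi}$. The only work is to translate the hypothesis that $\{[\varphi],[\psi]\}$ is a \emph{proper} edge of $\widetilde{P}_0(G)$ into a statement about the types $T_{\varphi},T_{\psi}$, using the defining property of a $2$-homomorphism.

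First I would set $e=\{[\varphi],[\psi]\}\in[E]^*_0$ and note that, since $\widetilde{t}$ is a $2$-homomorphism, Definition \ref{q-2hom} yields $\widetilde{t}(e)\in E^*_{\mathcal{T}(G_0)}$; that is, $\{T_{\varphi},T_{\psi}\}$ is a proper edge of $P_0(\mathcal{T}(G))$. By the conventions of Section \ref{graphs}, proper edges are genuine two-element subsets of the vertex set, i.e.\ elements of $\binom{\mathcal{T}(G_0)}{2}$, and this alone forces $T_{\varphi}\neq T_{\psi}$, which is the ``in particular'' clause.

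Next I would invoke the description of the edges of the proper power type graph recorded immediately after Definition \ref{type-graph}: a pair $\{T,T'\}$ belongs to $E^*_{\mathcal{T}(G_0)}$ precisely when $T$ and $T'$ are one the proper power of the other. Applying this to $\{T_{\varphi},T_{\psi}\}$ gives exactly the asserted claim. There is essentially no obstacle here, as the corollary is just an unwinding of the $2$-homomorphism property already established in Proposition \ref{tau}(i); the single point meriting care is to confirm that $\widetilde{t}$ carries the edge $\{[\varphi],[\psi]\}$ to $\{T_{\varphi},T_{\psi}\}$, which is immediate from the defining formula $\widetilde{t}([\psi])=T_{\psi}$.
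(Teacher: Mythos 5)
Your proposal is correct and matches the paper's intent exactly: the paper states this corollary with no separate proof, introducing it as ``a fact contained in Proposition \ref{tau}'', and the content of that remark is precisely your argument --- the $2$-homomorphism property of $\widetilde{t}$ sends the proper edge $\{[\varphi],[\psi]\}$ to the proper edge $\{T_{\varphi},T_{\psi}\}$ of $P_0(\mathcal{T}(G))$, whose proper edges are by definition pairs of distinct types that are one the proper power of the other. Your additional care in noting that a proper edge is a two-element subset (forcing $T_{\varphi}\neq T_{\psi}$) is exactly the right unwinding of the conventions of Section \ref{graphs}.
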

Observe that the converse of the above corollary does not hold. Consider, for instance, $\varphi=(1\ 2\ 3\ 4),\  \psi=(1\ 2)(3\ 4)\in S_4.$  We have that $T_{\psi}=[2^2]$ is the power of exponent $2$ of $T_{\varphi}=[4]$, but there is no edge between $[\varphi]$ and $[\psi]$  in $\widetilde{\mathcal{P}}_0(S_4),$ because  no power of $(1\ 2\ 3\ 4)$ is equal to $(1\ 2)(3\ 4).$

\subsection{\bf {\bf \em{\bf The order graph of a permutation group }}}\label{rod-perm}
\vskip 0.4 true cm

In this section we show that,  for any permutation group, the graph $\mathcal{O}_0(G)$ is a quotient of $\mathcal{T}_0(G)$.

Define the map
\begin{equation}\label{omegaT}o_{\mathcal{T}}:\mathfrak{T}_0(G)\rightarrow O_0(G), \qquad o_{\mathcal{T}}(T)=o(T)\  \hbox{for all}\  T\in \mathfrak{T}_0(G)\end{equation}
and recall the map  $\widetilde{o}$ defined in Proposition \ref{order}.

\begin{prop}\label{order-quo} For every $G\leq S_n$, the map $o_{\mathcal{T}}$ defines a complete $2$-homomorphism $o_{\mathcal{T}}:\mathcal{T}_0(G)\rightarrow \mathcal{O}_0(G)$ such that $o_{\mathcal{T}}\circ \widetilde{t}=\widetilde{o}.$
In particular $\mathcal{O}_0(G)$ is a quotient of $\mathcal{T}_0(G)$ and $c_0(\mathcal{O}(G))\leq c_0(\mathcal{T}(G)).$
\end{prop}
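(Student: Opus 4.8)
The plan is to read everything off directly from the definitions of the two proper graphs, reserving the cited results of \cite{bub} for the concluding quotient and counting statements. Concretely, I would verify in turn that $o_{\mathcal{T}}$ is a $2$-homomorphism, that it is complete, that $o_{\mathcal{T}}\circ\widetilde{t}=\widetilde{o}$, and then deduce the last two assertions. For the first point, note that $o_{\mathcal{T}}$ is well defined with values in $O_0(G)$ since each $T\in\mathcal{T}(G_0)$ equals $T_g$ for some $g\in G_0$, whence $o(T)=o(g)\in O_0(G)$. Now take a proper edge $\{T,T'\}\in E^*_{\mathcal{T}(G_0)}$; by definition the two types are one a proper power of the other, say $T'=T^a$. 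Then Lemma \ref{trivial-power}(iv) gives that $o(T')=o(T^a)$ is a \emph{proper} divisor of $o(T)$, so in particular $o(T')\mid o(T)$ and $o(T')\neq o(T)$, i.e. $\{o(T),o(T')\}$ is a proper edge of $\mathcal{O}_0(G)$. Loops map to loops automatically, so $o_{\mathcal{T}}$ is a homomorphism carrying proper edges to proper edges, which is exactly the meaning of $2$-homomorphism (Definition \ref{q-2hom}).

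The substantive step, and the one I expect to be the main obstacle, is completeness. Vertex-surjectivity is immediate: any $m\in O_0(G)$ is $o(g)$ for some $g\in G_0$, so $m=o(T_g)=o_{\mathcal{T}}(T_g)$ with $T_g\in\mathcal{T}(G_0)$. The delicate part is edge-surjectivity: given a proper edge $\{m,m'\}\in E^*_{\mathcal{O}_0(G)}$ with $m'$ a proper divisor of $m$, I must produce a proper edge of $P_0(\mathcal{T}(G))$ lying above it. I would pick $g\in G$ with $o(g)=m$, set $a=m/m'$ and $h=g^{a}$. Then $o(h)=m/\gcd(a,m)=m/(m/m')=m'$, so $h\in G_0$ and $T_h=(T_g)^{a}$. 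Since $\gcd(a,o(T_g))=\gcd(m/m',m)=m/m'$ is a proper divisor of $m$ (using $m'\neq 1,m$), Lemma \ref{trivial-power}(iii) guarantees that $(T_g)^{a}$ is a \emph{proper} power of $T_g$; hence $\{T_g,T_h\}\in E^*_{\mathcal{T}(G_0)}$ with $o_{\mathcal{T}}(\{T_g,T_h\})=\{m,m'\}$. Combined with vertex-surjectivity this yields completeness. The care needed throughout is to keep the two senses of ``proper'' straight: proper powers of partitions on the source side versus proper divisors of integers on the target side, a correspondence that is precisely the content of Lemma \ref{trivial-power}.

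The commutation $o_{\mathcal{T}}\circ\widetilde{t}=\widetilde{o}$ is then a one-line check: for $[\psi]\in[G]_0$ both sides equal $o(\psi)$, using $\widetilde{t}([\psi])=T_\psi$, the definition \eqref{omegaT}, the identity $o(T_\psi)=o(\psi)$ from Subsection \ref{types}, and $\widetilde{o}([\psi])=o(\psi)$ from Proposition \ref{order}. Finally, since $o_{\mathcal{T}}$ is a complete (hence surjective) homomorphism, \cite[Lemma 4.3]{bub} shows $\mathcal{O}_0(G)$ is a quotient of $P_0(\mathcal{T}(G))$, exactly as in Corollary \ref{quotient-order} and Proposition \ref{tau}(ii). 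The inequality $c_0(\mathcal{O}(G))\leq c_0(\mathcal{T}(G))$ follows because a surjective homomorphism carries each component of the source into a single component of the target while hitting every component of the target, so it cannot increase the number of components.
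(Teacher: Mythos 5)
Your proposal is correct, and it diverges from the paper's proof precisely at the step you identified as substantive: completeness. The paper constructs nothing by hand there; it lifts a given edge $\{m,m'\}\in E_{\mathcal{O}_0(G)}$ to an edge $\{[\varphi],[\varphi']\}$ of $\widetilde{P}_0(G)$ using the completeness of $\widetilde{o}$ already established in Proposition \ref{order}, and then pushes that edge down through the homomorphism $\widetilde{t}$ of Proposition \ref{tau}\,(i), obtaining the edge $\{T_{\varphi},T_{\varphi'}\}$ of $P_0(\mathcal{T}(G))$ above $\{m,m'\}$; vertex-surjectivity is likewise inherited from $\widetilde{o}$ via the identity $o_{\mathcal{T}}\circ\widetilde{t}=\widetilde{o}$ rather than argued separately. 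You instead redo the group theory directly, choosing $g$ of order $m$ and $h=g^{m/m'}$, with $\gcd(m/m',m)=m/m'$ a proper divisor of $m$ and Lemma \ref{trivial-power}\,(iii) certifying that $(T_g)^{m/m'}$ is a proper power of $T_g$. Both arguments are valid; note, though, that your construction essentially repeats the cyclic-subgroup argument that the paper already spent inside the proof of Proposition \ref{order} (``in $\langle x\rangle$ there exist elements of each order dividing $o(x)$''), whereas the paper's factorization through the commuting triangle $o_{\mathcal{T}}\circ\widetilde{t}=\widetilde{o}$ avoids that duplication and makes the structural relation among the three quotients the actual engine of the proof. Similarly, for the final inequality the paper simply cites \cite[Lemma 4.3 and Proposition 3.2]{bub}, while your direct component-counting argument for surjective homomorphisms is an elementary and correct substitute. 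The remaining steps---well-definedness, the $2$-homomorphism property via Lemma \ref{trivial-power}\,(iv), and the commutation---coincide with the paper's.
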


\begin{proof} First note that the map $o_{\mathcal{T}}$ in \eqref{omegaT} is well defined. Indeed, if $T\in \mathfrak{T}_0(G)$, then there exists $\psi\in G_0$ such that $T=T_{\psi}$ and so $o(T)=o(T_{\psi})=o(\psi)\in O_0(G).$ The same argument shows that $o_{\mathcal{T}}\circ \widetilde{t}=\widetilde{o}.$  By Proposition \ref{order},  the map $\widetilde{o}$ is a complete homomorphism from $\widetilde{\mathcal{P}}_0(G)$ in $\mathcal{O}_0(G)$. In particular $\widetilde{o}$ is surjective and so also $o_{\mathcal{T}}$ is surjective.

We  show that $o_{\mathcal{T}}$ is a $2$-homomorphism.
Let $\{T,T'\}$ be an edge in $\mathcal{T}_0(G)$. Then  $T,T'$ are one the power of the other. Let, say, $T'=T^a$ for some $a\in \mathbb{N}.$ Then $o(T')$ is a  divisor of $o(T),$ which says $\{o(T),o(T')\}\in E_{\mathcal{O}_0(G)}$.
Moreover, if $T\neq T'$ we have $[1^n]\neq T^a\neq T$ and thus Lemma \ref{trivial-power}\,(iv) implies that $o(T')$ is a proper divisor of $o(T).$

We finally show that $o_{\mathcal{T}}$ is complete. Let $e=\{m,m'\}\in E_{\mathcal{O}_0(G)}$. Since $\widetilde{o}$ is complete, there exist $[\varphi],[\varphi']\in [G]_0$ such that $\widetilde{o}([\varphi])=m, \widetilde{o}([\varphi'])=m'$ and $\{[\varphi],[\varphi']\}\in [E]_0.$ By Proposition \ref{tau}\,(i), the map $\widetilde{t}$ is a homomorphism and so $\{\widetilde{t}([\varphi]),\widetilde{t}([\varphi'])\}=\{T_{\varphi},T_{\varphi'}\}$ is an edge in $\mathcal{T}_0(G).$ Now it is enough to observe that $o_{\mathcal{T}}(T_{\varphi})=o(\varphi)=m$ and $o_{\mathcal{T}}(T_{\varphi'})=o(\varphi')=m'.$
 To close, we apply \cite[ Lemma 4.3 and Proposition 3.2]{bub}.
\end{proof}
\begin{cor}\label{iso-rod-type} Let $G\leq S_n$.  If $m\in O_0(G)$ is isolated in $\mathcal{O}_0(G),$ then each type of order $m$ is isolated in $\mathcal{T}_0(G).$

\end{cor}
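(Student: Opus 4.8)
The statement to prove is Corollary~\ref{iso-rod-type}: if $m\in O_0(G)$ is isolated in $\mathcal{O}_0(G)$, then each type of order $m$ is isolated in $P_0(\mathcal{T}(G))$.

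The plan is to invoke the $2$-homomorphism $o_{\mathcal{T}}:P_0(\mathcal{T}(G))\rightarrow\mathcal{O}_0(G)$ established in Proposition~\ref{order-quo}, together with Lemma~\ref{2hom-iso}. First I would let $T\in\mathcal{T}(G_0)$ be any type of order $m$, so that by the definition of the order of a partition we have $o_{\mathcal{T}}(T)=o(T)=m$. Since $m$ is isolated in $\mathcal{O}_0(G)$ by hypothesis, the image $o_{\mathcal{T}}(T)=m$ is an isolated vertex of the target graph $\mathcal{O}_0(G)$.

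Now the map $o_{\mathcal{T}}$ is a $2$-homomorphism by Proposition~\ref{order-quo}. Therefore Lemma~\ref{2hom-iso} applies directly with $\varphi=o_{\mathcal{T}}$, $x=T$, $X=P_0(\mathcal{T}(G))$ and $Y=\mathcal{O}_0(G)$: since $o_{\mathcal{T}}(T)$ is isolated in $\mathcal{O}_0(G)$, the vertex $T$ is isolated in $P_0(\mathcal{T}(G))$. As $T$ was an arbitrary type of order $m$, this shows that every such type is isolated, which is exactly the claim.

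I expect essentially no obstacle here: the corollary is a one-line consequence of two already-established facts, namely that $o_{\mathcal{T}}$ is a $2$-homomorphism and that $2$-homomorphisms pull back isolated vertices to isolated vertices. The only point requiring a moment's care is the bookkeeping that $o_{\mathcal{T}}(T)=m$ for \emph{every} type $T$ of order $m$, which is immediate from the definition $o_{\mathcal{T}}(T)=o(T)$ in \eqref{omegaT}, so the quantifier ``each type of order $m$'' is handled uniformly by running the same argument for each such $T$.
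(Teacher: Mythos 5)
Your proof is correct and follows exactly the paper's own argument: the paper likewise proves this corollary by noting that $o_{\mathcal{T}}$ is a $2$-homomorphism (Proposition~\ref{order-quo}) and then applying Lemma~\ref{2hom-iso}. Your additional bookkeeping that $o_{\mathcal{T}}(T)=o(T)=m$ for every type $T$ of order $m$ is just an explicit spelling-out of what the paper leaves implicit.
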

\begin{proof}  By Proposition \ref{order-quo}, $o_{\mathcal{T}}$ is a $2$-homomorphism. Thus we can apply to $o_{\mathcal{T}},$  Lemma \ref{2hom-iso}.
\end{proof}

\vskip 0.6 true cm
\section{\bf {\bf \em{\bf Quotient graphs of power graphs associated with fusion controlled permutation groups}}}\label{fusion case}
\vskip 0.4 true cm

\begin{defn}\label{fusion}{\rm Let $G\leq S_n$.
\begin{itemize} \item[(a)] $G$ is called {\it fusion controlled} if $N_{S_n}(G)$ controls the fusion in $G,$ with respect to $S_n$, that is, if for every $ \psi\in G$ and  $x\in S_n$ such that $\psi^x\in G$, there exists $y\in N_{S_n}(G)$ such that $\psi^x=\psi^y.$
\item[(b)]  For each $x\in N_{S_n}(G)$, define the map
\begin{equation}\label{F}
F_{x}: [G]_0\rightarrow[G]_0,\quad F_{x}([\psi])=[\psi^x]\  \hbox{for all} \ [\psi]\in [G]_0.
\end{equation}
\end{itemize}
}
\end{defn}

Note that $F_{x}$  is well defined, that is, for every  $[\psi]\in [G]_0,$ $F_{x}([\psi])$ does not depend on the representative of $[\psi]$,  and $F_{x}([\psi])\in [G]_0$. Those facts are immediate considering that conjugation is an automorphism of the group $S_n$ and that $x\in N_{S_n}(G).$

Recalling  now the homomorphism $\widetilde{t}$ defined in Proposition \ref{tau}  and the definition of   $\mathfrak{G}$-consistency given in \cite[Definition 4.4\,(b)]{bub},
we get the following result.

\begin{prop}\label{conj}  Let $G\leq S_n$. Then the following hold.
\begin{itemize}
\item[(i)] For every $x\in N_{S_n}(G)$, the map $F_x$ is a graph automorphism of $\widetilde{\mathcal{P}}_0(G)$ which preserves the type.
\item[(ii)] If $G$ is fusion controlled, then $\mathfrak{G}=\{F_x :x\in N_{S_n}(G)\}$ is a subgroup of $\mathrm {Aut}(\widetilde{\mathcal{P}}_0(G)) $ and  $\widetilde{t}$ is a $\mathfrak{G}$-consistent $2$-homomorphism from $\widetilde{\mathcal{P}}_0(G)$ to  $\mathcal{T}_0(G)$. In particular $\widetilde{t}$ is a complete orbit $2$-homomorphism.
\end{itemize}
\end{prop}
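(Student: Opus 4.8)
The plan is to prove the two parts in turn, isolating the fusion-control hypothesis for the one genuinely delicate step, the reverse fibre inclusion in part (ii).

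For part (i), I would first exhibit the inverse of $F_x$ explicitly. Since conjugation by $x^{-1}$ undoes conjugation by $x$, a direct computation gives $F_x\circ F_{x^{-1}}=F_{x^{-1}}\circ F_x=\mathrm{id}$, so $F_{x^{-1}}$ is a two-sided inverse and both maps are legitimate because $x^{-1}\in N_{S_n}(G)$. Next I would check that $F_x$ is a graph homomorphism: by Lemma \ref{lato2} an edge $\{[\psi],[\varphi]\}\in[E]^*_0$ means $\psi,\varphi$ are one a power of the other, say $\psi=\varphi^m$; as conjugation is a group automorphism of $S_n$ it commutes with taking powers, so $\psi^x=(\varphi^x)^m$, whence $\psi^x$ is a power of $\varphi^x$ and $\{[\psi^x],[\varphi^x]\}\in[E]^*_0$ again by Lemma \ref{lato2}. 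Running the same argument for $F_{x^{-1}}$ shows the inverse is a homomorphism too, so $F_x$ is an automorphism. Finally $\psi$ and $\psi^x$ are conjugate in $S_n$, hence $T_{\psi^x}=T_{\psi}$, which is exactly type preservation; equivalently $\widetilde{t}\circ F_x=\widetilde{t}$.

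For the subgroup claim in part (ii), the one identity I need is the composition law $F_x\circ F_y=F_{yx}$, read off from $(\psi^y)^x=\psi^{yx}$. Together with $F_{\mathrm{id}}=\mathrm{id}$ and $F_x^{-1}=F_{x^{-1}}$, and the closure of $N_{S_n}(G)$ under products and inverses, this gives closure of $\mathfrak{G}$ under composition and inversion, so $\mathfrak{G}$ is a subgroup of $\mathrm{Aut}(\widetilde{P}_0(G))$ by part (i).

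For the remaining assertions I would recall from Proposition \ref{tau}\,(i) that $\widetilde{t}$ is already a complete $2$-homomorphism, so only $\mathfrak{G}$-consistency in the sense of \cite[Definition 4.4(b)]{bub} is new, and once it holds the ``orbit'' conclusion follows because $\mathfrak{G}\leq\mathrm{Aut}(\widetilde{P}_0(G))$ witnesses the defining property of an orbit homomorphism. Concretely I would show that the fibres of $\widetilde{t}$ are exactly the $\mathfrak{G}$-orbits on $[G]_0$. One inclusion is immediate from part (i): since each $F_x$ preserves type, each $\mathfrak{G}$-orbit lies inside a single fibre $\{[\chi]:T_{\chi}=T\}$. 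The reverse inclusion is the crux and the place fusion control is used. Suppose $T_{\psi}=T_{\varphi}$ with $[\psi],[\varphi]\in[G]_0$; then $\psi,\varphi$ share a cycle type and so are conjugate in $S_n$, say $\varphi=\psi^z$ with $z\in S_n$. Since $\psi\in G$ and $\psi^z=\varphi\in G$, fusion control supplies $y\in N_{S_n}(G)$ with $\psi^z=\psi^y$, whence $F_y([\psi])=[\psi^y]=[\psi^z]=[\varphi]$ and the two vertices lie in one $\mathfrak{G}$-orbit. This establishes fibres $=$ orbits, i.e. $\mathfrak{G}$-consistency, and I expect precisely this step—replacing an arbitrary $S_n$-conjugator by a normaliser element that acts identically on $\langle\psi\rangle$—to be the main obstacle, the rest being bookkeeping. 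The concluding ``in particular'' then combines completeness from Proposition \ref{tau}\,(i) with the orbit property just proved.
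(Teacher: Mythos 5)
Your proposal is correct and follows essentially the same route as the paper: bijectivity via $F_{x^{-1}}$, the homomorphism check through Lemma \ref{lato2} and the fact that conjugation commutes with taking powers, type preservation giving $\widetilde{t}\circ F_x=\widetilde{t}$, and fusion control used exactly where the paper uses it, to replace an arbitrary $S_n$-conjugator by one in $N_{S_n}(G)$ so that fibres of $\widetilde{t}$ coincide with $\mathfrak{G}$-orbits. The only cosmetic differences are that the paper verifies completeness of $F_x$ directly where you invoke that $F_{x^{-1}}$ is a homomorphism, and the paper phrases fibres-equal-orbits as conditions (a) and (b) of \cite[Lemma 4.5]{bub}; both are equivalent to what you wrote.
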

\begin{proof}(i) Since, for every $x\in  N_{S_n}(G)$, we have $F_x\circ F_{x^{-1}}=F_{x^{-1}}\circ F_x=id_{[G]_0}$ we deduce that $F_x$ is a bijection.
We show that $F_x$ is a graph homomorphism. Let  $\{[\varphi],[\psi]\}\in [E]_0.$ Then, by Lemma \ref{lato2}, $\varphi$ and $\psi$ are one the power of the other. Let , say, $\varphi=\psi^m$ for some $m\in \mathbb{N}$.
Thus, since conjugation is an automorphism of $S_n$,  we have $\varphi^x=(\psi^m)^x=(\psi^x)^m$
and so $\{[\varphi^x],[\psi^x]\}\in [E]_0.$  Next we see that $F_x$ is complete. Let  $e=\{[\varphi^x],[\psi^x]\}\in [E]_0$. Then by Lemma \ref{lato2}, $\varphi^x$ and $\psi^x$ are one the power of the other. Let, say $\varphi^x=(\psi^x)^m$ for some $m\in \mathbb{N}$. Now, considering $\overline{e}=\{[\psi],[\psi^m]\}\in [E]_0$, we have $F_x(\overline{e})=e.$

Finally  $T_{F_x([\psi])}=T_{[\psi]}$  because $\psi^x$ is a conjugate of $\psi$ and thus has its same type. In other words, for every $x\in N_{S_n}(G)$, we have \begin{equation}\label{a}\widetilde{t}\circ F_x=\widetilde{t}.\end{equation}

(ii) The map $N_{S_n}(G)\rightarrow {\mathrm {Aut}}(\widetilde{\mathcal{P}}_0(G))$ associating $F_x$ to $x\in N_{S_n}(G),$ is a group homomorphism and so its image $\mathfrak{G}$ is a subgroup of  ${\mathrm {Aut}}(\widetilde{\mathcal{P}}_0(G))$. We show that $\widetilde{t}$  is $\mathfrak{G}$-consistent checking that conditions (a) and (b) of \cite[Lemma 4.5]{bub} are satisfied. Condition (a) is just  \eqref{a}. To get condition (b), pick $[\varphi],[\psi]\in[G]_0$ with $T_{\varphi}=T_{\psi}$. Then $\varphi$ and $\psi$ are elements of $G$ conjugate in $S_n$. Thus, as $G$ is fusion controlled, they are conjugate also in $N_{S_n}(G)$. So there exists $x\in N_{S_n}(G)$ such that $\varphi=\psi^x$,  which gives $[\varphi]=[\psi^x]=F_x([\psi]).$
\end{proof}

We say that $C, \hat{C}\in \widetilde{\mathcal{C}}_0(G)$ are {\it conjugate} if there exists $x\in N_{S_n}(G)$ such that $ \hat{C}=F_x(C).$

\begin{prop}\label{2con}  Let $G\leq S_n$ be fusion controlled. Then the following hold.
 \begin{itemize}
 \item[(i)]  $\mathcal{T}_0(G)$ is an orbit quotient of $\widetilde{\mathcal{P}}_0(G)$.
 \item[(ii)] $T\in \mathfrak{T}_0(G)$ is isolated in $\mathcal{T}_0(G)$ if and only if each $[\psi]\in [G]_0$ of type $T$ is isolated in $\widetilde{\mathcal{P}}_0(G)$.
  \item[(iii)] If $T\in \mathfrak{T}_0(G)$, then  the components of $\widetilde{\mathcal{P}}_0(G)$ admissible for $T$ are conjugate.
 \end{itemize}
\end{prop}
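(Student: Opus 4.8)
The three assertions all rest on Proposition~\ref{conj}, which tells us that $\mathfrak{G}=\{F_x: x\in N_{S_n}(G)\}$ is a subgroup of $\mathrm{Aut}(\widetilde{P}_0(G))$ and that $\widetilde{t}:\widetilde{P}_0(G)\rightarrow P_0(\mathcal{T}(G))$ is a complete orbit $2$-homomorphism. The plan is to read each of (i), (ii), (iii) off this single structural fact, combined with the quotient machinery of \cite{bub} and the elementary Lemmas~\ref{2hom-iso} and \ref{cutgraph-hom}.

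For (i), the point is that $P_0(\mathcal{T}(G))$ is already known to be a quotient of $\widetilde{P}_0(G)$ by Proposition~\ref{tau}(ii); what must be added is that this quotient is taken along an orbit relation. Since $\widetilde{t}$ is an \emph{orbit} homomorphism, the partition of $[G]_0$ into the fibers $\widetilde{t}^{-1}(T)$, with $T\in\mathcal{T}(G_0)$, coincides by definition with the partition into the orbits of $\mathfrak{G}$. As $\widetilde{t}$ is complete, \cite[Lemma 4.3]{bub} identifies $P_0(\mathcal{T}(G))$ with the quotient of $\widetilde{P}_0(G)$ modulo this fiber relation, which is exactly the orbit relation of $\mathfrak{G}$; hence the quotient is an orbit quotient. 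I expect this step to be the most delicate one, only because it requires matching the precise notion of ``orbit quotient'' in \cite{bub} with the orbit-homomorphism conclusion of Proposition~\ref{conj}; no new computation is involved.

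For (ii), I would argue the two implications separately, noting that neither needs more than the already established fact (Proposition~\ref{tau}(i)) that $\widetilde{t}$ is a complete $2$-homomorphism. If $T$ is isolated in $P_0(\mathcal{T}(G))$, then for any $[\psi]$ of type $T$ the image $\widetilde{t}([\psi])=T$ is isolated, and Lemma~\ref{2hom-iso} applied to the $2$-homomorphism $\widetilde{t}$ forces $[\psi]$ to be isolated in $\widetilde{P}_0(G)$. Conversely, if $T$ were not isolated, it would lie on a proper edge $\{T,T'\}$ of $P_0(\mathcal{T}(G))$; completeness of $\widetilde{t}$ lifts this edge to a proper edge $\{[\psi],[\varphi]\}$ with, say, $\widetilde{t}([\psi])=T$, so the vertex $[\psi]$ of type $T$ is not isolated, contradicting the hypothesis.

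For (iii), let $C,\hat C$ be two components of $\widetilde{P}_0(G)$ each carrying a vertex of type $T$, say $[\psi]\in V_C$ and $[\varphi]\in V_{\hat C}$ with $T_\psi=T_\varphi=T$. Then $\psi,\varphi$ are conjugate in $S_n$, and fusion control lets me upgrade this, exactly as in the proof of Proposition~\ref{conj}(ii), to conjugacy in $N_{S_n}(G)$: there is $x\in N_{S_n}(G)$ with $[\varphi]=F_x([\psi])$. Since $F_x$ is a graph automorphism of $\widetilde{P}_0(G)$ by Proposition~\ref{conj}(i), it permutes the components and sends the component $C\ni[\psi]$ onto the component containing $F_x([\psi])=[\varphi]$, namely $\hat C$. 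Thus $\hat C=F_x(C)$, so $C$ and $\hat C$ are conjugate, which is the assertion. The only point to verify is that $F_x(C)$ is genuinely the whole of $\hat C$, but this is automatic because an automorphism carries components bijectively onto components and the vertex $[\varphi]$ already determines $\hat C$.
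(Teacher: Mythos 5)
Your proof is correct, and while it leans on the same structural input (Proposition~\ref{conj}) as the paper, it replaces the paper's citations with inline arguments and in one place genuinely deviates. The paper's proof is three references: (i) is declared a rephrasing of Proposition~\ref{conj}\,(ii) via \cite[Definition 5.14]{bub}; (ii) is obtained by noting that orbit homomorphisms are locally surjective and then invoking \cite[Corollary 5.12]{bub} together with Lemma~\ref{2hom-iso}; (iii) is an application of \cite[Proposition 6.9\,(i)]{bub}. Your (i) is essentially the paper's, spelled out through \cite[Lemma 4.3]{bub}. Your (ii) agrees with the paper in the forward direction (Lemma~\ref{2hom-iso}), but for the converse you use only \emph{completeness} of $\widetilde{t}$: a proper edge $\{T,T'\}$ lifts to an edge whose endpoints have distinct images, hence to a proper edge, exhibiting a non-isolated vertex of type $T$. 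This is a real difference from the paper's route through local surjectivity, and it buys something: completeness of $\widetilde{t}$ holds for every $G\leq S_n$ by Proposition~\ref{tau}\,(i), so your argument shows that (ii) does not actually require the fusion-controlled hypothesis, whereas the paper's argument (local surjectivity comes from the orbit property) does. The trade-off is that the paper's local-surjectivity argument gives slightly more -- \emph{every} preimage of a non-isolated $T$ is non-isolated, not merely some preimage -- though for the stated equivalence your weaker conclusion suffices. Your (iii) unfolds the content of \cite[Proposition 6.9\,(i)]{bub} in this special case: fusion control produces $y\in N_{S_n}(G)$ with $[\varphi]=F_y([\psi])$, and the automorphism $F_y$ carries components onto components, so $\hat{C}=F_y(C)$; this matches the paper's definition of conjugate components exactly and is self-contained, at the cost of repeating an argument the paper prefers to quote from its companion reference.
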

\begin{proof} Keeping in mind the definition of orbit quotient given in \cite[Definition 5.14]{bub}, (i) is a rephrasing of Proposition \ref{conj}\,(ii). To show (ii) recall that, by \cite[Proposition 5.9\,(ii)]{bub},  every orbit homomorphism is locally surjective and then
apply \cite[Corollary 5.12]{bub} and  Lemma \ref{2hom-iso} to the orbit $2$-homomorphism $\widetilde{t}.$ (iii) is an application of \cite[Proposition 6.9\,(i)] {bub}.
 \end{proof}

  \begin{lem}\label{tc} Let $G$ be fusion controlled, $C\in\widetilde{\mathcal{C}}_0(G)$ and  $T\in \mathfrak{T}(C)$. Then $\mathfrak{T}(C)=V_{C(T)}.$
\end{lem}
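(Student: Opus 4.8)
The plan is to exploit that, under the fusion controlled hypothesis, the type map $\widetilde{t}\colon\widetilde{P}_0(G)\to P_0(\mathcal{T}(G))$ is a complete orbit $2$-homomorphism by Proposition \ref{conj}(ii), and hence in particular \emph{locally surjective} by \cite[Proposition 5.9\,(ii)]{bub}. I would begin with two bookkeeping remarks: by the very definition of the types admissible for a subgraph one has $\mathcal{T}(C)=\widetilde{t}(V_C)$, and since $T\in\mathcal{T}(C)\subseteq\mathcal{T}(G_0)$ is a vertex of $P_0(\mathcal{T}(G))$, the component $C(T)$ is well defined. I would then establish $\mathcal{T}(C)=V_{C(T)}$ by proving the two inclusions separately.

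For $\mathcal{T}(C)\subseteq V_{C(T)}$ I would use that a graph homomorphism carries a connected subgraph into a single component. Concretely, given $[\psi],[\psi']\in V_C$, a path in $C$ joining them is mapped by $\widetilde{t}$ to a walk in $P_0(\mathcal{T}(G))$ joining $T_{\psi}$ and $T_{\psi'}$ (consecutive images are either equal or adjacent, since $\widetilde{t}$ preserves edges), so $T_{\psi}$ and $T_{\psi'}$ lie in the same component. As $T=T_{\psi_0}$ for some $[\psi_0]\in V_C$, every element of $\widetilde{t}(V_C)=\mathcal{T}(C)$ lies in the component of $T$, that is, in $V_{C(T)}$.

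The reverse inclusion $V_{C(T)}\subseteq\mathcal{T}(C)$ is the heart of the matter and is where local surjectivity enters, via a path-lifting induction. Fix $T'\in V_{C(T)}$ and a path $T=T_0,T_1,\dots,T_s=T'$ in $P_0(\mathcal{T}(G))$. Assuming inductively that $T_i\in\mathcal{T}(C)$, choose $[\psi_i]\in V_C$ with $T_{\psi_i}=T_i$. Since $\{T_i,T_{i+1}\}$ is a proper edge, $T_{i+1}$ lies in the neighborhood of $T_i=\widetilde{t}([\psi_i])$, so local surjectivity at $[\psi_i]$ yields some $[\psi_{i+1}]$ in the neighborhood of $[\psi_i]$ with $\widetilde{t}([\psi_{i+1}])=T_{i+1}$. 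The single point demanding care, and the main obstacle, is that the reflexive neighborhood contains $[\psi_i]$ itself, so one must rule out the degenerate lift: here I would observe that $T_{i+1}\neq T_i$ forces $[\psi_{i+1}]\neq[\psi_i]$, whence $\{[\psi_i],[\psi_{i+1}]\}$ is a proper edge of $\widetilde{P}_0(G)$ and therefore $[\psi_{i+1}]\in V_C$. Thus $T_{i+1}=T_{\psi_{i+1}}\in\mathcal{T}(C)$, and the induction delivers $T'\in\mathcal{T}(C)$, completing the proof.
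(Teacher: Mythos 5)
Your proof is correct, and it rests on exactly the same hinge as the paper's: fusion control makes $\widetilde{t}$ a complete orbit homomorphism (Proposition \ref{conj}), hence locally surjective by \eqref{general-inc}. The difference is what happens next. The paper's entire proof is a citation: it invokes \cite[Theorem A\,(i)]{bub}, which says that a pseudo-covering (locally surjective and complete) homomorphism carries each component of the source onto a component of the target, so $\widetilde{t}(C)=C(T)$ and $\mathcal{T}(C)=V_{C(T)}$ immediately. You instead re-prove the needed portion of that theorem by hand: the inclusion $\mathcal{T}(C)\subseteq V_{C(T)}$ from the elementary fact that a homomorphism maps a connected subgraph into a single component, and the reverse inclusion by a path-lifting induction straight from the definition of local surjectivity. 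Your care with the degenerate lift is exactly the right point to worry about in this reflexive setting: the neighborhood of $[\psi_i]$ contains $[\psi_i]$ itself, but $T_{i+1}\neq T_i$ forces the lift $[\psi_{i+1}]$ to be a distinct, adjacent vertex, so the induction never leaves $C$. The trade-off between the two treatments is clear: the paper's proof is a one-liner consistent with its policy of outsourcing all component-counting machinery to \cite{bub}, while yours is self-contained, makes the mechanism (and the role of reflexivity) transparent, and would survive even if the reader has no access to the general theorem — at the cost of duplicating an argument that \cite{bub} establishes once in full generality.
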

\begin{proof} Apply  \cite [Theorem A\,(i)]{bub} to  $\widetilde{t}$ recalling that, by \eqref{general-inc},  every orbit homomorphism is locally surjective.
  \end{proof}

Proposition \ref{conj} guarantees that  when $G$ is fusion controlled, we have
\begin{equation}\label{ttilde-prop}
\widetilde{t}\in \mathrm{O}\big (\widetilde{\mathcal{P}}_0(G),\mathcal{T}_0(G)\big )\cap\mathrm{Com}\big(\widetilde{\mathcal{P}}_0(G),\mathcal{T}_0(G)\big).
\end{equation}
 Thus the machinery for counting the components of $\widetilde{\mathcal{P}}_0(G)$ by those in $\mathcal{T}_0(G)$ can start provided that we control the numbers $k_{\widetilde{\mathcal{P}}_0(G)}(T)$ and $k_{C}(T)$, for $T\in\mathfrak{T}(G_0)$ and $C\in\widetilde{\mathcal{C}}_0(G)$. We see first  that  $k_{\widetilde{\mathcal{P}}_0(G)}(T)$ is easily determined, for any permutation group $G$, through $\mu_{T} (G).$ Recall that $\phi$ denotes the Euler totient function.
 \begin{lem}\label{lem:2}
Let  $G\leq S_n$. If $T\in \mathfrak{T}(G_0),$ then  $k_{\widetilde{\mathcal{P}}_0(G)}(T)=\dfrac{\mu_{T} (G)}{\phi(o(T))}$.
\end{lem}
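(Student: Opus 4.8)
The plan is to compute $k_{\widetilde{P}_0(G)}(T)$ directly as the number of vertices $[\psi]$ of $\widetilde{P}_0(G)$ whose type $T_{[\psi]}=T_\psi$ equals $T$, and to relate this count to the set
$$A_T=\{\psi\in G_0 : T_\psi=T\},$$
whose cardinality is exactly $\mu_T(G)$ by the definition of $\mu_T(G)$ (note that, since $T\in\mathcal{T}(G_0)$, every permutation of type $T$ automatically lies in $G_0$). The guiding idea is that the fibers of the projection $\pi\colon P_0(G)\to\widetilde{P}_0(G)$ over the vertices of type $T$ partition $A_T$ into blocks of one and the same size, so that the number of such vertices is $|A_T|$ divided by that common size.

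First I would record two facts already available in Section~\ref{onda} and Subsection~\ref{types}. On the one hand, by Definition~\ref{qpow-gr} the class $[\psi]$ consists precisely of the generators of $\langle\psi\rangle$ and hence has size $\phi(o(\psi))$. On the other hand, the type is constant on each class: if $\varphi\in[\psi]$ then $\langle\varphi\rangle=\langle\psi\rangle$, so writing $\varphi=\psi^s$ the equivalence $T_{\psi}=T_{\psi^s}\Leftrightarrow\langle\psi\rangle=\langle\psi^s\rangle$ (recorded in Subsection~\ref{types}) forces $T_\varphi=T_{\psi^s}=T_\psi$. Consequently $T_{[\psi]}$ is well defined, each class of type $T$ is entirely contained in $A_T$, and, using $o(\psi)=o(T_\psi)=o(T)$, every such class has the uniform size $\phi(o(T))$.

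With these facts in hand the conclusion is immediate. The set $A_T$ is the disjoint union of the classes $[\psi]$ with $\psi\in A_T$, each of which has type $T$ and size $\phi(o(T))$; moreover these classes are exactly the vertices of $\widetilde{P}_0(G)$ of type $T$, that is, the objects counted by $k_{\widetilde{P}_0(G)}(T)$. Counting $A_T$ block by block therefore yields $\mu_T(G)=|A_T|=k_{\widetilde{P}_0(G)}(T)\,\phi(o(T))$, which rearranges to the claimed formula. There is no serious obstacle here; the only point demanding care is the verification that the blocks all have the same size and remain inside a single type class, which is precisely where the identities $T_\psi=T_{\psi^s}\Leftrightarrow\langle\psi\rangle=\langle\psi^s\rangle$ and $o(\psi)=o(T)$ intervene, and this in passing shows that the right-hand side $\mu_T(G)/\phi(o(T))$ is indeed an integer.
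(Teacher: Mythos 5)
Your proof is correct and is essentially the paper's own argument: the paper likewise introduces the set $G_T$ of permutations of type $T$ (your $A_T$), observes that it is a disjoint union of $\sim$-classes of common size $\phi(o(T))$ because generators of the same cyclic subgroup share the same type and every element of type $T$ has order $o(T)$, identifies these classes with the vertices of $\widetilde{P}_0(G)$ of type $T$, and divides. No further comment is needed.
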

\begin{proof}
If  $T=[m_1^{t_1},..., m_k^{t_k}]\in \mathfrak{T}(G_0)$, then the set $G_T=\{\sigma\in G: \sigma\  \hbox{is of type }\ T\}$ is nonempty and each element in $G_T$ has the same order given by $\mathrm{lcm}\{m_i\}_{i=1}^k=o(T).$ Consider the equivalence relation $\sim$  which defines the quotient graph  $\widetilde{\mathcal{P}}_0(G)$.
 Since generators of the same cyclic subgroup of $G$ share the same type, it follows that $G_T$ is union of $\sim$-classes, each of them of size $\phi(o(T))$. On the other hand, $[\sigma]\in [G]$ has type $T$ if and only if $\sigma\in G_T.$ This means that $k_{\widetilde{\mathcal{P}}_0(G)}(T)$ is the number of $\sim$-classes contained in $G_T,$ and so $k_{\widetilde{\mathcal{P}}_0(G)}(T)=\dfrac{\mu_{T} (G)}{\phi(o(T))}$.
 \end{proof}

 Let $T\in\mathfrak{T}_0(G)$. Accordingly to \cite[Definition 6.1]{bub}, we consider the set $\widetilde{\mathcal{C}}_0(G)_T$ of the components of $\widetilde{\mathcal{P}}_0(G)$ admissible for $T$ and  denote by $\widetilde {c}_0(G)_{T}$ its order.
Thus $\widetilde {c}_0(G)_{T}$ counts the number of components of $\widetilde{\mathcal{P}}_0(G)$ in which there exists at least one vertex $[\psi]$ of type $T.$

\begin{lem}\label{fusion2} Let $G\leq S_n$ be fusion controlled and $T\in\mathfrak{T}_0(G)$.
\begin{itemize}
\item[(i)] Then $\widetilde {c}_0(G)_{T}=\frac{k_{\widetilde{\mathcal{P}}_0(G)}(T)}{k_C(T)}=\frac{\mu_{T} (G)}{\phi(o(T))k_C(T)}$ for all $C\in \widetilde{\mathcal{C}}_0(G)_T.$
 \item[(ii)] If $T$ is isolated in $\mathcal{T}_0(G)$, then $\widetilde {c}_0(G)_{T}=k_{\widetilde{\mathcal{P}}_0(G)}(T)=\frac{\mu_{T} (G)}{\phi(o(T))}.$
  \end{itemize}
  \end{lem}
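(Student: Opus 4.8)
The plan is to prove (i) from two elementary facts: the vertices of type $T$ are partitioned among the components admissible for $T$, and conjugate components carry the same number of vertices of type $T$. First I would record the counting identity
\[
k_{\widetilde{P}_0(G)}(T)=\sum_{C\in\widetilde{\mathcal{C}}_0(G)_T}k_C(T),
\]
which holds because every vertex $[\psi]$ of type $T$ lies in exactly one component of $\widetilde{P}_0(G)$, and a component belongs to $\widetilde{\mathcal{C}}_0(G)_T$ precisely when it contains at least one such vertex; hence summing $k_C(T)$ over the admissible components counts each vertex of type $T$ exactly once.

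The key step is to show that $k_C(T)$ does not depend on the choice of $C\in\widetilde{\mathcal{C}}_0(G)_T$. By Proposition \ref{2con}\,(iii), any two components admissible for $T$ are conjugate, so given $C,\hat C\in\widetilde{\mathcal{C}}_0(G)_T$ there exists $x\in N_{S_n}(G)$ with $\hat C=F_x(C)$. By Proposition \ref{conj}\,(i), $F_x$ is a graph automorphism of $\widetilde{P}_0(G)$ which preserves the type; hence its restriction is a type-preserving bijection $V_C\to V_{\hat C}$, and in particular it restricts to a bijection between the vertices of type $T$ in $C$ and those in $\hat C$. Thus $k_C(T)=k_{\hat C}(T)$, and we may speak of a common value $k_C(T)$ for all $C\in\widetilde{\mathcal{C}}_0(G)_T$. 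Substituting into the counting identity yields $k_{\widetilde{P}_0(G)}(T)=\widetilde c_0(G)_T\,k_C(T)$, whence $\widetilde c_0(G)_T=k_{\widetilde{P}_0(G)}(T)/k_C(T)$; the stated arithmetic form then follows by replacing $k_{\widetilde{P}_0(G)}(T)$ with $\mu_T(G)/\phi(o(T))$ via Lemma \ref{lem:2}.

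For (ii), I would simply specialize (i). If $T$ is isolated in $P_0(\mathcal{T}(G))$, then by Proposition \ref{2con}\,(ii) every $[\psi]\in[G]_0$ of type $T$ is isolated in $\widetilde{P}_0(G)$, so the component containing it reduces to that single vertex and therefore contains exactly one vertex of type $T$. Hence $k_C(T)=1$ for every $C\in\widetilde{\mathcal{C}}_0(G)_T$, and the formula of (i) collapses to $\widetilde c_0(G)_T=k_{\widetilde{P}_0(G)}(T)=\mu_T(G)/\phi(o(T))$.

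The only delicate point is the uniformity of $k_C(T)$ across the admissible components, and this is exactly where fusion control enters, through the conjugacy furnished by Proposition \ref{2con}\,(iii) together with the type-preservation of the automorphisms $F_x$ from Proposition \ref{conj}\,(i). Once this uniformity is in hand, the remaining steps are a bookkeeping identity and a direct appeal to Lemma \ref{lem:2}, so I expect no substantial obstacle to remain.
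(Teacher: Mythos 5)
Your proof is correct, but it takes a more self-contained route than the paper. The paper disposes of (i) in one line: since $\widetilde{t}$ is a complete orbit homomorphism (formula \eqref{ttilde-prop}), the inclusion \eqref{general-inc} shows it is locally surjective and component equitable, so \cite[Proposition 6.8]{bub} applies directly, and Lemma \ref{lem:2} makes the count explicit; part (ii) is then deduced from (i) and Proposition \ref{2con}\,(ii), exactly as you do. What you have done instead is to re-derive, inside the proof, the component-equitability that \cite[Proposition 6.8]{bub} packages abstractly: you obtain the uniformity of $k_C(T)$ over $C\in\widetilde{\mathcal{C}}_0(G)_T$ from the conjugacy of the admissible components (Proposition \ref{2con}\,(iii)) together with the fact that each $F_x$ is a type-preserving graph automorphism (Proposition \ref{conj}\,(i)), and you then combine this with the partition identity $k_{\widetilde{P}_0(G)}(T)=\sum_{C\in\widetilde{\mathcal{C}}_0(G)_T}k_C(T)$. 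This is essentially an unpacking, in the present setting, of the content of \cite[Propositions 6.8 and 6.9]{bub}. Your version buys transparency: the reader sees exactly where fusion control enters (through conjugacy of admissible components) and the counting argument is elementary; the cost is length, and note that your proof is still not fully independent of \cite{bub}, since Proposition \ref{2con}\,(iii), on which you lean, is itself proved in the paper by citing \cite[Proposition 6.9\,(i)]{bub}. The paper's version buys brevity and stays consistent with the general quotient-graph machinery on which the whole paper is built.
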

  \begin{proof}(i) By  \eqref{ttilde-prop} and \eqref{general-inc}, we can apply \cite[Proposition 6.8]{bub} to $\widetilde{t}$ and then use
  Lemma \ref{lem:2} to make the computation explicit.

  (ii) is a consequence of (i) and of  Proposition \ref{2con}\,(ii).
\end{proof}

\begin{proof} [Proof of Theorem A]  Applying \cite[Theorem A]{bub}  to $\widetilde{t}$, we get
 \[c_0(G)=\widetilde {c}_0(G)=\sum_{i=1}^{c_0(\mathcal{T}(G))}\widetilde {c}_0(G)_{T_i}\]

and, by Lemma \ref{fusion2}\,(i),  we obtain
\[\sum_{i=1}^{c_0(\mathcal{T}(G))}\widetilde {c}_0(G)_{T_i}=\sum_{i=1}^{c_0(\mathcal{T}(G))}\frac{\mu_{T_i} (G)}{\phi(o(T_i))k_{C_i}(T_i)}.\]
\end{proof}

 We now observe some interesting limitations to the types  which can appear in a same component of $\widetilde{\mathcal{P}}_0(G)$.

\begin{cor}\label{two-types}  Let $G\leq S_n$ be fusion controlled and $C\in \widetilde{\mathcal{C}}_0(G)$.
\begin{itemize}
\item[(i)] If $T,T'\in  \mathfrak{T}(C),$ then $\frac{k_{\widetilde{\mathcal{P}}_0(G)}(T)}{k_C(T)}=\frac{k_{\widetilde{\mathcal{P}}_0(G)}(T')}{k_C(T')}.$
\item[(ii)] $C\cong \widetilde{t}(C)$ if and only if
there exists $T\in \mathfrak{T}(C)$ such that $k_{C}(T)=1$ and, for every $T'\in \mathfrak{T}(C)$, $k_{\widetilde{\mathcal{P}}_0(G)}(T)=k_{\widetilde{\mathcal{P}}_0(G)}(T')$.
\item[(iii)] If $C$ contains all the vertices of $[G]_0$ of a certain type $T\in \mathfrak{T}(C)$, then $C$ also contains all the vertices of $[G]_0$ of  type $T'$ for all $T'\in \mathfrak{T}(C).$
\item[(iv)] If there exists $T\in \mathfrak{T}(C)$ such that $k_{C}(T)=k_{\widetilde{\mathcal{P}}_0(G)}(T)>1,$ then $C\not\cong \widetilde{t}(C).$
\end{itemize}
\end{cor}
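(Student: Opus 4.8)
The plan is to derive all four parts from the single master identity in Lemma \ref{fusion2}\,(i), which asserts that for fusion controlled $G$ the quantity $\widetilde{c}_0(G)_T = \frac{k_{\widetilde{P}_0(G)}(T)}{k_C(T)}$ is independent of the choice of component $C \in \widetilde{\mathcal{C}}_0(G)_T$. The conceptual engine is that $\widetilde{t}$ is a complete orbit $2$-homomorphism, so the component-equitability packaged in \cite[Proposition 6.8]{bub} forces rigid numerical constraints across the fibers.

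For part (i), I would observe that if $T, T' \in \mathcal{T}(C)$ then $C \in \widetilde{\mathcal{C}}_0(G)_T \cap \widetilde{\mathcal{C}}_0(G)_{T'}$, and moreover by Lemma \ref{tc} both $T$ and $T'$ lie in $V_{C(T)}$, i.e. the types in $\mathcal{T}(C)$ form exactly the vertex set of a single component $C(T)$ of $P_0(\mathcal{T}(G))$. Hence $\widetilde{\mathcal{C}}_0(G)_T = \widetilde{\mathcal{C}}_0(G)_{T'}$, so both equal the same set of components, and in particular $\widetilde{c}_0(G)_T = \widetilde{c}_0(G)_{T'}$. Feeding this into Lemma \ref{fusion2}\,(i) with the common component $C$ yields $\frac{k_{\widetilde{P}_0(G)}(T)}{k_C(T)} = \frac{k_{\widetilde{P}_0(G)}(T')}{k_C(T')}$.

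For part (ii), the isomorphism $C \cong \widetilde{t}(C)$ means the restriction of $\widetilde{t}$ to $C$ is injective on vertices, equivalently $k_C(T) = 1$ for every $T \in \mathcal{T}(C)$ (since $\widetilde{t}(C) = C(T)$ has vertex set $\mathcal{T}(C)$ by Lemma \ref{tc}). Using part (i), once one value $k_C(T) = 1$ is known, the ratios $\frac{k_{\widetilde{P}_0(G)}(T')}{k_C(T')} = k_{\widetilde{P}_0(G)}(T)$ are all equal; then $k_C(T') = 1$ for all $T'$ is equivalent to the stated condition $k_{\widetilde{P}_0(G)}(T') = k_{\widetilde{P}_0(G)}(T)$ for all $T'$. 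This is the delicate step: I must be careful that injectivity of $\widetilde{t}|_C$ on vertices is precisely $k_C(T') = 1$ for all $T'$, and that a single witness $T$ with $k_C(T)=1$ propagates via the constant ratio — so the forward and backward implications hinge on correctly unwinding the definition of $k_C$ against Lemma \ref{tc}. Part (iv) is then the contrapositive flavor of (ii): if $k_C(T) = k_{\widetilde{P}_0(G)}(T) > 1$ then $C$ contains every vertex of type $T$, so $k_C(T) \neq 1$, and (ii) fails, giving $C \not\cong \widetilde{t}(C)$.

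For part (iii), I would argue that $C$ containing all vertices of type $T$ means $\widetilde{\mathcal{C}}_0(G)_T = \{C\}$, i.e. $\widetilde{c}_0(G)_T = 1$, hence $k_C(T) = k_{\widetilde{P}_0(G)}(T)$. Using the constant ratio of part (i), for any $T' \in \mathcal{T}(C)$ we get $k_C(T') = k_{\widetilde{P}_0(G)}(T')$, which says $C$ absorbs all vertices of type $T'$ as well. I expect part (ii) to be the main obstacle, since it requires translating a graph-isomorphism statement into the arithmetic of the multiplicities $k_C$ and then checking both directions carefully; the remaining parts are short corollaries of (i) together with this identification.
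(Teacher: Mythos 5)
Your route is necessarily different from the paper's: the paper disposes of this corollary in one line, citing \cite[Proposition 7.2]{bub}, while you reconstruct all four parts inside the paper's own toolkit from Lemma \ref{fusion2}\,(i), Lemma \ref{tc} and the properties of $\widetilde{t}$. That reconstruction is a reasonable and more self-contained route, and parts (i) and (iii) of your argument are correct and complete: the identification $\widetilde{\mathcal{C}}_0(G)_T=\widetilde{\mathcal{C}}_0(G)_{T'}$ via Lemma \ref{tc} is exactly what is needed to equate $\widetilde{c}_0(G)_T$ with $\widetilde{c}_0(G)_{T'}$, after which Lemma \ref{fusion2}\,(i) gives (i), and (iii) follows because the hypothesis makes the constant ratio equal to $1$.

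However, two steps have genuine gaps. First, in (ii) you declare that $C\cong\widetilde{t}(C)$ ``means'' $\widetilde{t}|_C$ is injective on vertices, i.e.\ $k_C(T')=1$ for all $T'\in\mathcal{T}(C)$. The direction ``isomorphism $\Rightarrow$ all $k_C(T')=1$'' is fine by counting: $|V_C|=\sum_{T'}k_C(T')\geq|\mathcal{T}(C)|=|V_{\widetilde{t}(C)}|$, with equality forced. But the converse, that a vertex-bijective homomorphism is an isomorphism, is false for general graph homomorphisms, since a bijective homomorphism need not reflect edges; so the backward implication of your (ii) is unproven as written. The missing ingredient is that $\widetilde{t}$ is an orbit homomorphism, hence locally surjective by \eqref{general-inc}, and its restriction to $C$ is locally surjective onto the component $\widetilde{t}(C)$; local surjectivity together with vertex-injectivity forces every edge of $\widetilde{t}(C)$ to lift to an edge of $C$, which is what upgrades the vertex bijection to an isomorphism. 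Second, your deduction of (iv) is logically incomplete: the condition in (ii) is existential in $T$, so exhibiting one type $T$ with $k_C(T)\neq 1$ does not negate it --- you must exclude every potential witness $T''\in\mathcal{T}(C)$. This is easily repaired using (i): the hypothesis $k_C(T)=k_{\widetilde{P}_0(G)}(T)$ makes the constant ratio equal to $1$, so any $T''$ with $k_C(T'')=1$ would satisfy $k_{\widetilde{P}_0(G)}(T'')=1\neq k_{\widetilde{P}_0(G)}(T)$, violating the equal-fiber-size clause of (ii); alternatively, note directly that $C\cong\widetilde{t}(C)$ would force $k_C(T)=1$ by the counting argument above, contradicting $k_C(T)>1$. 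With these two patches your proof is complete, but as written both (ii) and (iv) fall short.
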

\begin{proof}  An immediate application of \cite[Proposition 7.2]{bub}.
\end{proof}

Finally we state a useful criterion of connectedness for $\widetilde{\mathcal{P}}_0(G)$.

\begin{cor}\label{2conreverse}  Let $G$ be fusion controlled and let $\mathcal{T}_0(G)$ be connected. Then $\widetilde{\mathcal{P}}_0(G)$ is a union of conjugate components. Moreover, if there exist $T\in \mathfrak{T}_0(G)$ and $C\in \widetilde{\mathcal{C}}_0(G)$, with $C$ containing all the vertices of $[G]_0$ of type $T,$ then $\widetilde{\mathcal{P}}_0(G)$ is connected.
\end{cor}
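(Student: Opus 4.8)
The plan is to exploit that, for a fusion controlled $G$, the projection $\widetilde{t}: \widetilde{P}_0(G)\rightarrow P_0(\mathcal{T}(G))$ is a complete orbit $2$-homomorphism (Proposition \ref{conj}), so that connectedness of the target $P_0(\mathcal{T}(G))$ forces strong uniformity among the components of the source $\widetilde{P}_0(G)$. Concretely, I would first translate connectedness downstairs into the statement that \emph{every} type is admissible for \emph{every} component of $\widetilde{P}_0(G)$, and then feed this into the conjugacy statement of Proposition \ref{2con}\,(iii).

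For the first assertion, I would take an arbitrary component $C\in\widetilde{\mathcal{C}}_0(G)$; being nonempty, it contains some vertex $[\psi]$, whose type $T_\psi$ lies in $\mathcal{T}(C)$. Applying Lemma \ref{tc} with this $T_\psi$ yields $\mathcal{T}(C)=V_{C(T_\psi)}$, where $C(T_\psi)$ is the component of $P_0(\mathcal{T}(G))$ containing $T_\psi$. Since $P_0(\mathcal{T}(G))$ is connected, $C(T_\psi)$ is the whole graph, so $V_{C(T_\psi)}=\mathcal{T}(G_0)$ and hence $\mathcal{T}(C)=\mathcal{T}(G_0)$; that is, every component is admissible for every type. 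I would then fix any $T\in\mathcal{T}(G_0)$: all components are admissible for $T$, and Proposition \ref{2con}\,(iii) says any two components admissible for $T$ are conjugate, whence all components of $\widetilde{P}_0(G)$ are mutually conjugate. This is precisely the claim that $\widetilde{P}_0(G)$ is a union of conjugate components.

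For the \emph{moreover} part, suppose some $C\in\widetilde{\mathcal{C}}_0(G)$ contains every vertex of $[G]_0$ of a fixed type $T\in\mathcal{T}(G_0)$. Given an arbitrary component $C'$, the first part gives $\mathcal{T}(C')=\mathcal{T}(G_0)\ni T$, so $C'$ contains some vertex $[\varphi]$ of type $T$; but every vertex of type $T$ already lies in $C$, so $[\varphi]\in V_C\cap V_{C'}$, forcing $C'=C$. Thus $\widetilde{P}_0(G)$ has a single component and is connected. As an alternative route, I could invoke Corollary \ref{two-types}\,(iii) to conclude that $C$ in fact contains all vertices of every type in $\mathcal{T}(C)=\mathcal{T}(G_0)$, so that $C=\widetilde{P}_0(G)$ directly.

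The genuinely delicate step is the bridge in the first assertion: deducing $\mathcal{T}(C)=\mathcal{T}(G_0)$ for every component $C$ from connectedness of the quotient $P_0(\mathcal{T}(G))$. This rests on Lemma \ref{tc}, which in turn relies on the local surjectivity of the orbit homomorphism $\widetilde{t}$ recorded in \eqref{general-inc}; once that bridge is in place, the conjugacy conclusion is immediate from Proposition \ref{2con}\,(iii) and the remaining arguments are purely set-theoretic bookkeeping.
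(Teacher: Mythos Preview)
Your proof is correct and follows essentially the same route as the paper: both obtain $\mathcal{T}(C)=\mathcal{T}(G_0)$ for every component $C$ via the pseudo-covering/local surjectivity of $\widetilde{t}$ (you through Lemma~\ref{tc}, the paper by citing \cite[Theorem A]{bub} directly), and then invoke Proposition~\ref{2con}\,(iii) for the conjugacy. For the ``moreover'' clause the paper appeals to \cite[Corollary~5.15]{bub}, whereas your direct set-theoretic argument (any other component must contain a vertex of type $T$, hence meets $C$) is a perfectly valid and slightly more self-contained replacement.
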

\begin{proof}
Let $C\in  \widetilde{\mathcal{C}}_0(G)$ and consider $\widetilde{t}(C).$ Then $\widetilde{t}(V_C)=\mathfrak{T}(C).$
By \cite[Theorem A\,(i)] {bub}, $\widetilde{t}(C)$ is a component of $\mathcal{T}_0(G)$ and since that graph is connected, we get $\widetilde{t}(V_C)=\mathfrak{T}_0(G)$ and so $\widetilde{t}^{-1}(\widetilde{t}(V_C))=[G]_0.$
By \cite[Theorem A\,(iii)] {bub}, this implies that $\widetilde{\mathcal{P}}_0(G)$ is the union of the components in $\mathcal{C}(\widetilde{\mathcal{P}}_0(G))_{\widetilde{t}(C)}$ which, by \cite[Lemma 6.3\,(i)] {bub} are equal to the components admissible for any $T \in \mathfrak{T}(G_0)$. So Proposition \ref{2con}\,(iii) applies giving $\widetilde{\mathcal{P}}_0(G)$ as  a union of conjugate components. The last part follows from \cite[Corollary 5.15]{bub}.
 \end{proof}

\vskip 0.6 true cm
\section{\bf {\bf \em{\bf The number of components in  $\widetilde{\mathcal{P}}_0(S_n),\  \mathcal{T}_0(S_n),\ \mathcal{ O}_0(S_n)$}}}
\vskip 0.4 true cm

In this section we apply Procedure \cite[6.10]{bub} to $S_n$ (which is fusion controlled) to make the computation of  Formula \eqref{formula-fusion22} concrete.
Exploiting the strong link among the graphs $\mathcal{P}_0(S_n),\ \widetilde{\mathcal{P}}_0(S_n),\  \mathcal{T}_0(S_n),\ \mathcal{ O}_0(S_n)$, we determine simultaneously $c_0(S_n)=\widetilde {c}_0(S_n)$, $c_0(\mathcal{T}(S_n))$ and $c_0(\mathcal{ O}(S_n))$. On the route, we give a description of the components of those four graphs.
Recall that $\mathfrak{T}_0(n)= \mathfrak{T}(S_n)\setminus\{[1^n]\}$ and that, for $T\in \mathfrak{T}_0(n)$,  the numbers $\mu_T(S_n)$ are computed by \eqref{count}. Also recall that, if $T\in \mathfrak{T}_0(n)$ and $C$ is a component of $\widetilde{\mathcal{P}}_0(S_n)$, then $k_{C}(T)$ counts the number of vertices in $C$ having type $T.$

By Lemma \ref{tc} and Theorem A, the procedure  to get  $\widetilde {c}_0(S_n)$ translates into the following.

 \begin{proc} {\bf Procedure to compute $\widetilde {c}_0(S_n)$ }\label{procedureS_n}
{\rm
\begin{itemize}\item[ (I)] {\it  Selection of $T_i$ and $C_i$}
\end{itemize}
\begin{itemize}
\item[ {\it Start}]  : Pick arbitrary $T_1\in \mathfrak{T}_0(n)$ and choose any $C_1\in\widetilde{\mathcal{C}}_0(S_n)_{T_1}$.

\item[ {\it Basic step}]: Given  $T_1,\dots, T_i\in \mathfrak{T}_0(n) $ and $C_1,\dots,C_i\in\widetilde{\mathcal{C}}_0(S_n)$ such that $C_j\in \widetilde{\mathcal{C}}_0(S_n)_{T_j}$ ($1\leq j\leq i$), choose  any $T_{i+1}\in \mathfrak{T}_0(n) \setminus \bigcup_{j=1}^i \mathfrak{T}(C_{j})$ and any  $C_{i+1}\in \widetilde{\mathcal{C}}_0(S_n)_{T_{i+1}}.$

\item[ {\it Stop}]: The procedure stops in $c_0(\mathcal{T}(S_n))$ steps.
\end{itemize}
\medskip

\begin{itemize}\item[ (II)] {\it  The value of  $\widetilde {c}_0(S_n)$ }
\end{itemize}
 Compute the integers $$\widetilde {c}_0(S_n)_{T_j}=\frac{\mu_{T_j} (S_n)}{\phi(o(T_j))k_{C_j}(T_j)}\quad  (1\leq j\leq c_0(\mathcal{T}(S_n)))$$ and sum them up to get  $\widetilde {c}_0(S_n)$.}
 \end{proc}
 The complete freedom in the choice of the
$C_j\in \widetilde{\mathcal{C}}_0(S_n)_{T_j}$  allows us
to compute  each $ \widetilde{c}_0(S_n)_{T_j}={\frac{\mu_{T_j} (S_n)}{\phi(o(T_j)) k_{C_j}(T_j)}}$
selecting $C_j$ as the component containing $[\psi]$, for $[\psi]$ chosen
as preferred among the $\psi\in S_n\setminus\{id\}$ with $T_{\psi}=T_j.$ We will
apply  this fact with no further mention.  We emphasize also that the computation  is made easy by \eqref{count}.
Remarkably, the number
$c_0(\mathcal{T}(S_n))$ counts the steps of the procedure.

\vskip 0.4 true cm
\subsection{\bf  Preliminary lemmas and small degrees }
\vskip 0.4 true cm

We start by summarizing what we know about isolated vertices by Proposition \ref{2con}\,(ii), Lemma \ref{isolated-order} and Corollary \ref{iso-rod-type}.

\begin{lem} \label{isolatedSn}\noindent \begin{itemize}\item[(i)] The type $T\in \mathfrak{T}_0(S_n)$ is isolated in $\mathcal{T}_0(S_n)$ if and only if each $[\psi]\in [S_n]_0$ of type $T$ is isolated in $\widetilde{\mathcal{P}}_0(S_n)$.
 \item[(ii)] If $m\in O_0(S_n)$ is isolated in $\mathcal{O}_0(S_n),$ then each vertex of order $m$ is isolated in $\widetilde{\mathcal{P}}_0(S_n)$ and each type of order $m$ is isolated in $\mathcal{T}_0(G).$
 \item[(iii)] If, for some $\psi\in S_n$, $[\psi]$ is isolated in $\widetilde{\mathcal{P}}_0(S_n)$, then $o(\psi)$ is prime and the component of $P_0(S_n)$ containing $\psi$ is a complete graph on $p-1$ vertices.
 \end{itemize}
\end{lem}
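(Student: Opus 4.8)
The plan is to prove Lemma~\ref{isolatedSn} by recognizing that all three parts are direct specializations to $G=S_n$ of results already established earlier in the paper, so the work consists mainly in checking that the hypotheses of those general results are met and then quoting them. The key observation is that $S_n$ is fusion controlled: for any $\psi\in S_n$ and any $x\in S_n$ with $\psi^x\in S_n$ (which always holds), we may simply take $y=x\in N_{S_n}(S_n)=S_n$, so the defining condition of Definition~\ref{fusion}\,(a) is trivially satisfied. This single fact unlocks the propositions of Section~\ref{fusion case}. I would state it explicitly at the start of the proof, since it is the hypothesis under which Proposition~\ref{2con} applies.

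For part~(i), I would apply Proposition~\ref{2con}\,(ii) with $G=S_n$. Since $S_n$ is fusion controlled, that proposition says precisely that $T\in\mathcal{T}(S_n)_0$ is isolated in $P_0(\mathcal{T}(S_n))$ if and only if each $[\psi]\in[S_n]_0$ of type $T$ is isolated in $\widetilde{P}_0(S_n)$, which is the claim. For part~(ii), the hypothesis is that $m\in O_0(S_n)$ is isolated in $\mathcal{O}_0(S_n)$. The first conclusion follows from Lemma~\ref{isolated-order}\,(i): applied to $G=S_n$, it gives that every vertex $[x]$ with $o(x)=m$ is isolated in $\widetilde{P}_0(S_n)$ (note every such $m$ arises as $o(x)$ for some $x$, so the statement has content). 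The second conclusion follows from Corollary~\ref{iso-rod-type}: applied to $G=S_n$, it gives that each type of order $m$ is isolated in $P_0(\mathcal{T}(S_n))$. For part~(iii), I would apply Lemma~\ref{isolated-order}\,(ii) to $G=S_n$: if $[\psi]$ is isolated in $\widetilde{P}_0(S_n)$, then $o(\psi)$ is a prime $p$ and the component of $P_0(S_n)$ containing $\psi$ is a complete graph on $p-1$ vertices.

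The only genuine check, and the part most likely to need a word of care, is the well-definedness and applicability of the cited results to $S_n$ specifically. Parts~(i) and~(ii) (first conclusion) rely on $S_n$ being fusion controlled, so I would make sure the fusion-controlled claim is in place before invoking Proposition~\ref{2con}\,(ii) and before the statement is meaningful; parts~(ii) (second conclusion) and~(iii) are valid for arbitrary finite groups and permutation groups respectively, so they require no such hypothesis and follow immediately. I expect no serious obstacle here, since the lemma is essentially a bookkeeping summary gathering Proposition~\ref{2con}\,(ii), Lemma~\ref{isolated-order}, and Corollary~\ref{iso-rod-type} into a single statement tailored to $S_n$; the whole proof is a short sequence of citations once the fusion-controlled observation is recorded.
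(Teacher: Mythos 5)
Your proposal is correct and matches the paper's own treatment exactly: the paper presents Lemma \ref{isolatedSn} as a summary obtained by citing Proposition \ref{2con}\,(ii) (valid since $S_n$ is trivially fusion controlled), Lemma \ref{isolated-order}, and Corollary \ref{iso-rod-type}, which is precisely your sequence of citations. Your explicit verification that $S_n$ is fusion controlled is a sensible addition; the paper takes this as obvious from the introduction.
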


As a consequence, we are able to analyze the prime or prime plus $1$ degrees.

\begin{lem}\label{lem:3}
Let  $n\in\{p, p+1\}$ for some $p\in P$. Then the following facts hold:
 \begin{itemize}
 \item[(i)] $p$ is isolated in $\mathcal{O}_0(S_n).$ The type $[1^{n-p},p]$ is isolated in $\mathcal{T}_0(S_n);$
  \item[(ii)] each vertex  of $[S_n]_0$ of order $p$ is  isolated in $\widetilde{\mathcal{P}}_0(S_n)$;
  \item[(iii)] the number of components of $\widetilde{\mathcal{P}}_0(S_n)$  containing the elements of order $p$ in $[S_n]_0$ is given by $\widetilde {c}_0(S_p)_{[p]}=(p-2)!$ if $n=p$, and by $\widetilde {c}_0(S_{p+1})_{[1,p]}=(p+1)(p-2)!$ if $n=p+1$.
 \end{itemize}
 \end{lem}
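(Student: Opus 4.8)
\emph{The plan} is to reduce everything to a single arithmetic observation about $S_n$ when $n\in\{p,p+1\}$ and then feed it into the quotient machinery already established. The crucial point is that for such $n$ one has $2p>n$ (since $p\ge 2$), so no element of $S_n$ can contain two disjoint $p$-cycles nor a single cycle whose length is a proper multiple of $p$. Consequently, if $\psi\in S_n$ has order divisible by the prime $p$, then $p$ divides some cycle length $\ell$ of $\psi$; as $\ell\le n<2p$, this forces $\ell=p$, and that $p$-cycle already uses $p$ of the at most $p+1$ available points. Hence $\psi$ is a single $p$-cycle and $o(\psi)=p$. In other words, the only elements of $S_n$ whose order is a multiple of $p$ have order exactly $p$, and every such element has type $[1^{n-p},p]$.

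For part (i), I would first argue that $p$ is isolated in $\mathcal{O}_0(S_n)$. Recall that $\{m,m'\}$ is a proper edge of $\mathcal{O}_0(S_n)$ precisely when one of $m,m'$ is a proper divisor of the other. Since $p$ is prime it has no proper divisor in $O_0(S_n)$, and by the observation above no element order in $O_0(S_n)$ is a proper multiple of $p$. Thus $p$ is adjacent to no other vertex, i.e. it is isolated. The type $[1^{n-p},p]$, being the type of a $p$-cycle, has order $p$, so Corollary \ref{iso-rod-type} immediately yields that it is isolated in $P_0(\mathcal{T}(S_n))$.

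Part (ii) then follows directly from Lemma \ref{isolatedSn}(ii): since $p$ is isolated in $\mathcal{O}_0(S_n)$, every vertex of $[S_n]_0$ of order $p$ is isolated in $\widetilde{P}_0(S_n)$.

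For part (iii), each such isolated vertex forms its own component, so the number of components meeting the order-$p$ elements equals the number of vertices $[\psi]\in[S_n]_0$ with $o(\psi)=p$, all of which share the unique order-$p$ type $T$ ($T=[p]$ if $n=p$, $T=[1,p]$ if $n=p+1$). Since $T$ is isolated in $P_0(\mathcal{T}(S_n))$ by (i) and $S_n$ is fusion controlled, Lemma \ref{fusion2}(ii) gives $\widetilde{c}_0(S_n)_T=\mu_T(S_n)/\phi(o(T))$. It then remains to evaluate this via \eqref{count}: for $n=p$ one has $\mu_{[p]}(S_p)=(p-1)!$ and $\phi(p)=p-1$, yielding $(p-2)!$; for $n=p+1$ one has $\mu_{[1,p]}(S_{p+1})=(p+1)!/p=(p+1)(p-1)!$, yielding $(p+1)(p-2)!$. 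The main obstacle is establishing the arithmetic observation cleanly; once it is in hand, parts (i)--(iii) are short applications of Corollary \ref{iso-rod-type}, Lemma \ref{isolatedSn}(ii) and Lemma \ref{fusion2}(ii), with the final numbers produced by \eqref{count}.
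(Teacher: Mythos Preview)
Your proof is correct and follows essentially the same route as the paper's: both argue that no element of $S_n$ has order $kp$ with $k\ge 2$ (you spell out the reason via $2p>n$, while the paper simply asserts it), deduce that $p$ is isolated in $\mathcal{O}_0(S_n)$, and then invoke Lemma~\ref{isolatedSn} for (i)--(ii) and Lemma~\ref{fusion2}(ii) together with \eqref{count} for (iii). Your version is slightly more explicit in justifying the arithmetic observation and in carrying out the final count, but the structure and key lemmas are identical.
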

\begin{proof} (i)-(ii)  Since  $n\in\{p, p+1\}$, we have $p\leq n$ so that $S_n$ admits elements of order $p$. Since there exists no element with order $kp$ for $k\geq 2$, $p$ is isolated in $\mathcal{O}_0(S_n).$ Thus Lemma \ref{isolatedSn} applies.

 (iii) The counting follows from Lemma \ref{fusion2}\,(ii) and Formula \eqref{count} after having observed that the only type of order $p$ in $\mathcal{T}_0(S_p)$ is $[p]$ and that the only type of order $p$ in $\mathcal{T}_0(S_{p+1})$ is $[1,p]$.
\end{proof}
\begin{lem}\label{lem:4}
For $n\geq 6$, the transpositions of $\widetilde{\mathcal{P}}_0(S_n)$ lie  in the same component $\widetilde{\Delta}_n$ of $\widetilde{\mathcal{P}}_0(S_n)$.
 Moreover $\mathfrak{T}(\widetilde{\Delta}_n)\supseteq \{[1^{n-2},2][1^{n-5},2,3], [1^{n-3},3]\}.$
\end{lem}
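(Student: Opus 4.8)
The plan is to show two things about $\widetilde{P}_0(S_n)$ for $n \geq 6$: first, that all transpositions lie in a single component $\widetilde{\Delta}_n$, and second, that this component's admissible types include $[1^{n-2},2]$, $[1^{n-5},2,3]$ and $[1^{n-3},3]$. Since $S_n$ is fusion controlled (indeed $N_{S_n}(S_n)=S_n$ controls its own fusion trivially), I would freely invoke Proposition \ref{conj} and its consequences, but in fact the cleanest route is to exhibit explicit paths in $\widetilde{P}_0(S_n)$ using Lemma \ref{lato2}, which reduces adjacency of classes $[\varphi],[\psi]$ to the condition that $\varphi,\psi$ are one a power of the other.

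First I would connect a single transposition to a $3$-cycle-containing element. The key elementary observation is that a permutation $\psi$ of type $[1^{n-5},2,3]$ — say $\psi=(1\ 2)(3\ 4\ 5)$, which exists because $n\geq 5$ — has order $6$, and its power $\psi^3=(1\ 2)$ is a transposition while $\psi^2=(3\ 4\ 5)$ is a $3$-cycle. Thus in $\widetilde{P}_0(S_n)$ the vertex $[\psi]$ is adjacent to both $[(1\ 2)]$ and $[(3\ 4\ 5)]$, giving a path $(1\ 2),\ \psi,\ (3\ 4\ 5)$ through the class of type $[1^{n-5},2,3]$. This single configuration already places the types $[1^{n-2},2]$, $[1^{n-5},2,3]$ and $[1^{n-3},3]$ in one common component, which I would name $\widetilde{\Delta}_n$; this establishes the ``Moreover'' clause at once, since $\mathcal{T}(\widetilde{\Delta}_n)$ contains the types of all three vertices on this path.

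Next I would show that \emph{every} transposition lies in $\widetilde{\Delta}_n$. The natural argument is that any two transpositions are conjugate in $S_n$, and the conjugating element $x$ induces the automorphism $F_x$ of $\widetilde{P}_0(S_n)$ from Proposition \ref{conj}\,(i); since $F_x$ preserves types and carries one transposition class to another, the orbit-quotient structure (Proposition \ref{2con}\,(iii)) forces all components admissible for the type $[1^{n-2},2]$ to be conjugate. To upgrade ``conjugate'' to ``equal,'' I would argue directly: given transpositions $(i\ j)$ and $(k\ l)$, I build an explicit path between their classes inside $\widetilde{P}_0(S_n)$ by routing each through a common $3$-cycle. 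Concretely, using $n\geq 6$ there is enough room to choose $3$-cycles $\sigma,\sigma'$ and order-$6$ elements of type $[1^{n-5},2,3]$ linking $[(i\ j)]$ to $[\sigma]$ and $[\sigma']$ to $[(k\ l)]$, and then to connect $[\sigma]$ to $[\sigma']$ through the subgraph induced by $3$-cycles — any two $3$-cycles sharing a common point, or chained via overlapping supports, are reachable by short paths. The hypothesis $n\geq 6$ is exactly what guarantees the disjoint-support room needed to build the order-$6$ bridging elements from \emph{any} starting transposition.

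The main obstacle will be organizing the second step so that the path-construction between arbitrary transpositions is genuinely uniform and does not secretly require extra points beyond $n=6$. The delicate case is when the two transpositions share a point or have small combined support, so I would treat the connectivity of the set of $3$-cycle classes as a separate sub-claim: show that the classes $[\sigma]$ for $\sigma$ a $3$-cycle form a connected subgraph (via elements of type $[1^{n-6},3,3]$ or $[1^{n-5},2,3]$ as bridges), and then glue each transposition to this $3$-cycle ``hub.'' Once the hub is connected and every transposition attaches to it, all transpositions lie in the single component $\widetilde{\Delta}_n$, completing the proof.
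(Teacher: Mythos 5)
Your first step is sound and coincides with the paper's basic building block: the vertex $[(1\ 2)(3\ 4\ 5)]$ of type $[1^{n-5},2,3]$ has order $6$, its cube is the transposition and its square (up to generating the same cyclic subgroup) is the $3$-cycle, so the path $[(1\ 2)],\,[(1\ 2)(3\ 4\ 5)],\,[(3\ 4\ 5)]$ exists and already yields the ``Moreover'' clause. The gap is in your second step, the one that must connect \emph{arbitrary} transpositions. First, the phrase ``connect $[\sigma]$ to $[\sigma']$ through the subgraph induced by $3$-cycles'' cannot be carried out: that induced subgraph has no proper edges, since a nontrivial power of a $3$-cycle generates the same cyclic subgroup, so distinct $3$-cycle classes are never adjacent. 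Second, your proposed bridges of type $[1^{n-6},3,3]$ do not work either: such an element has order $3$, so its only nontrivial powers are again of type $[3,3]$ and generate the same cyclic subgroup; consequently a $[3,3]$-class is adjacent to no $3$-cycle class (and no $3$-cycle has it as a power). The only usable bridges are the order-$6$ elements of type $[1^{n-5},2,3]$, and those pass through transpositions --- which makes the ``hub'' argument circular unless it is organized exactly so as to avoid assuming what is being proved.

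Third, and most concretely, your ``common $3$-cycle'' routing fails at the boundary case $n=6$, which the lemma must cover (it is used for $S_6$ and $S_7$ in Theorem B\,(i)). For two \emph{disjoint} transpositions the combined support has $4$ points, and a $3$-cycle disjoint from both needs $3$ more, i.e.\ $n\geq 7$; you have also inverted the delicate case, since transpositions \emph{sharing} a point have combined support $3$ and the routing works with exactly $6$ points. The paper's proof is structured precisely around this: for $(a\ b)$ and $(a\ c)$ sharing a point it takes $d,e,f\notin\{a,b,c\}$ (this is where $n\geq 6$ enters) and uses the path $[(a\ b)],\,[(a\ b)(d\ e\ f)],\,[(d\ e\ f)],\,[(a\ c)(d\ e\ f)],\,[(a\ c)]$; for disjoint $(a\ b)$ and $(c\ d)$ it does \emph{not} look for a common $3$-cycle but routes through the third transposition $(a\ c)$, applying the shared-point case twice. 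Your proposal could be repaired by adopting exactly this two-case reduction, but as written the construction does not go through for $n=6$.
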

\begin{proof}
Let $[\varphi_1]$ and $[\varphi_2]$ be two distinct transpositions in $S_n$. Then their supports $M_{\varphi_1}$ $M_{\varphi_2}$ are distinct. If
$|M_{\varphi_1}\cap M_{\varphi_2}|=1$, then there exist distinct $a,b,c\in N$ such that $\varphi_1=(a~b)$ and $\varphi_2=(a~c)$. Moreover, as $n\geq 6$, there exist distinct $e,f,g\in N\setminus \{a,b,c\}$ and we have the path
$$[(a~b)], [(a~b)(d~e~f)], [(d~e~f)], [(a~c)(d~e~f)], [(a~c)]$$
between $[\varphi_1]$ and $[\varphi_2]$.
If $|M_{\varphi_1}\cap M_{\varphi_2}|=0$, then there exist distinct $a,b,c, d\in N$ such that $\varphi_1=(a~b)$ and $\varphi_2=(c~d).$ Let $\varphi_3=(a~c)$. By the previous case, there exists a path between $[\varphi_1]$ and
$[\varphi_3]$ and a path between $[\varphi_2]$ and $[\varphi_3]$. Therefore there exists also a path between $[\varphi_1]$ and $[\varphi_2]$. This shows that all the transpositions of $\widetilde{\mathcal{P}}_0(S_n)$ lie  in the same component $\widetilde{\Delta}_n.$ Next, collecting the types met in the paths, we get $\mathfrak{T}(\widetilde{\Delta}_n)\supseteq \{[1^{n-2},2][1^{n-5},2,3], [1^{n-3},3]\}.$
\end{proof}
We note now an interesting immediate fact.
\begin{lem}\label{complete}  Let $X, Y$ be graphs and $f\in\mathrm{Hom}(X,Y)$. If $\hat{X}$ is a complete subgraph of $X$, then $f(\hat{X})$ is a complete subgraph of $Y.$
\end{lem}

\begin{cor}\label{comp-structure} Let $n\geq 6$ and $\Delta_n$ be the unique component of $\mathcal{P}_0(A_n)$ such that $\pi (\Delta_n)=\widetilde{\Delta}_n.$ Then neither one of the components $\Delta_n,\  \widetilde{\Delta}_n,\  \widetilde{t}(\widetilde{\Delta}_n)$ of the graphs $\mathcal{P}_0(S_n)$, $\widetilde{\mathcal{P}}_0(S_n)$, $\mathcal{T}_0(S_n)$
respectively, nor  the connected subgraph $\widetilde{o}(\widetilde{\Delta}_n)$ of $\mathcal{O}_0(S_n)$ is a complete graph.
\end{cor}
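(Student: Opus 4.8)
The plan is to reduce all four non-completeness assertions to the single statement that the most ``downstream'' object, $\widetilde{o}(\widetilde{\Delta}_n)$, fails to be complete, exploiting the commuting triangle of homomorphisms already set up. Recall from Lemmas~\ref{rmk:1} and~\ref{component-quotient} that the projection $\pi$ carries the component $\Delta_n$ of $P_0(S_n)$ onto $\widetilde{\Delta}_n$; from Proposition~\ref{tau} that $\widetilde{t}$ is a homomorphism carrying $\widetilde{\Delta}_n$ onto $\widetilde{t}(\widetilde{\Delta}_n)$; and from Proposition~\ref{order-quo} that $o_{\mathcal{T}}$ is a homomorphism with $o_{\mathcal{T}}\circ\widetilde{t}=\widetilde{o}$, so that $o_{\mathcal{T}}$ carries $\widetilde{t}(\widetilde{\Delta}_n)$ onto $\widetilde{o}(\widetilde{\Delta}_n)$. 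The key tool is Lemma~\ref{complete}, which pushes completeness \emph{forward} along a homomorphism. Applying it successively along the chain $\Delta_n\to\widetilde{\Delta}_n\to\widetilde{t}(\widetilde{\Delta}_n)\to\widetilde{o}(\widetilde{\Delta}_n)$ (via $\pi$, $\widetilde{t}$ and $o_{\mathcal{T}}$) shows that if any one of the four objects were a complete graph, then every object to its right would be complete as well, and in particular $\widetilde{o}(\widetilde{\Delta}_n)$ would be complete. Reading this contrapositively, it suffices to prove that $\widetilde{o}(\widetilde{\Delta}_n)$ is not complete.

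To establish that, I would exhibit two distinct non-adjacent vertices of $\widetilde{o}(\widetilde{\Delta}_n)$. Since $n\geq 6$, Lemma~\ref{lem:4} gives $\{[1^{n-2},2],[1^{n-3},3]\}\subseteq\mathcal{T}(\widetilde{\Delta}_n)$, so $\widetilde{\Delta}_n$ contains a vertex of type $[1^{n-2},2]$ (a transposition, of order $2$) and a vertex of type $[1^{n-3},3]$ (a $3$-cycle, of order $3$). Applying $\widetilde{o}$ places both $2$ and $3$ in the vertex set of $\widetilde{o}(\widetilde{\Delta}_n)$. However, by the description of $E^*_{\mathcal{O}_0(S_n)}$, a proper edge joins two distinct vertices only when one is a proper divisor of the other, and neither of $2,3$ divides the other; hence $\{2,3\}$ is not an edge of $\mathcal{O}_0(S_n)$. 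Thus $\widetilde{o}(\widetilde{\Delta}_n)$ contains two distinct non-adjacent vertices and is not a complete graph.

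I do not expect a serious obstacle here: once Lemma~\ref{complete} and the divisibility description of the edges of $\mathcal{O}_0(S_n)$ are in place, the argument is immediate. The only points that need attention are keeping the direction of Lemma~\ref{complete} straight (it transports completeness from domain to image, so the reduction must be phrased contrapositively) and confirming that both orders $2$ and $3$ genuinely occur among the types in $\mathcal{T}(\widetilde{\Delta}_n)$, which is exactly the content of Lemma~\ref{lem:4} for $n\geq 6$.
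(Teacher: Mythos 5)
Your proposal is correct and follows essentially the same route as the paper: both arguments exhibit the orders $2$ and $3$ (coming from the types $[1^{n-2},2]$ and $[1^{n-3},3]$ guaranteed by Lemma~\ref{lem:4}) as non-adjacent vertices of $\widetilde{o}(\widetilde{\Delta}_n)$, and then propagate non-completeness back up the chain $\Delta_n,\ \pi(\Delta_n),\ (\widetilde{t}\circ\pi)(\Delta_n),\ (o_{\mathcal{T}}\circ\widetilde{t}\circ\pi)(\Delta_n)$ by applying Lemma~\ref{complete} three times, read contrapositively. The only difference is cosmetic: the paper additionally spells out why $\Delta_n$ exists uniquely and why $\widetilde{t}(\widetilde{\Delta}_n)$ is a genuine component (via the cited results of \cite{bub}), facts which your write-up takes as given from the statement.
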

\begin{proof}  First note that  the existence of a unique component  $\Delta_n$ of  $\mathcal{P}_0(A_n)$ such that $\pi(\Delta_n)=\widetilde{\Delta}_n$ is guaranteed by \cite[Corollary 5.13]{bub}, because $\pi$ is pseudo-covering and tame due to Lemma \ref{rmk:1}. Moreover, by Proposition \ref{conj}, $\widetilde{t}$ is a complete orbit homomorphism and thus locally surjective. Hence \cite[Theorem A\,(i)] {bub} guarantees that  $\widetilde{t}(\widetilde{\Delta}_n)$ is a component of $\widetilde{\mathcal{P}}_0(S_n)$ with  $V_{\widetilde{t}(\widetilde{\Delta}_n)}=\mathfrak{T}(\widetilde{\Delta}_n)$. On the other hand, by Proposition \ref{order-quo}, 
we have the complete graph homomorphism $o_{\mathcal{T}}:\mathcal{T}_0(S_n)\rightarrow \mathcal{O}_0(S_n)$ such that $o_{\mathcal{T}}\circ \widetilde{t}=\widetilde{o}$. In particular $\widetilde{o}(\widetilde{\Delta}_n)=o_{\mathcal{T}}(\widetilde{t}(\widetilde{\Delta}_n)),$ so that
 we can interpret the sequence  of graphs
 $$\Delta_n,\  \widetilde{\Delta}_n,\  \widetilde{t}(\widetilde{\Delta}_n),\ \widetilde{o}(\widetilde{\Delta}_n)$$
  as
   \begin{equation}\label{new}\Delta_n,\  \pi(\Delta_n),\ ( \widetilde{t}\circ \pi)(\Delta_n),\ (o_{\mathcal{T}}\circ \widetilde{t}\circ \pi)(\Delta_n).
   \end{equation}
By Lemma \ref{lem:4}, $\widetilde{o}(\widetilde{\Delta}_n)$ admits as vertices the integer $2$ and $3$ which are not adjacent in $\mathcal{O}_0(S_n)$. Thus
$\widetilde{o}(\widetilde{\Delta}_n)$ is not a complete graph. Then to deduce that no graph in the sequence \eqref{new} is complete we start from the bottom and apply three times Lemma \ref{complete}. 
\end{proof}
Note that, in general, $\widetilde{o}(\widetilde{\Delta}_n)$ is not a component of $\mathcal{O}_0(S_n)$ because $\widetilde{o}$ is not pseudo-covering. For instance, $\widetilde{o}(\widetilde{\Delta}_6)$ is not a component of $\mathcal{O}_0(S_6)$ because $4\notin V_{\widetilde{o}(\widetilde{\Delta}_6)}$ while $4$ belongs to the component of $\mathcal{O}_0(S_6)$ containing $\widetilde{o}(\widetilde{\Delta}_6)$.
An argument in the proof of Theorem B\,(ii) shows that $\widetilde{o}(\widetilde{\Delta}_n)$ is indeed a component at least for $n\geq8$.

\begin{proof} [Proof of Theorem B\,(i)] For each $n$ ($2\leq n\leq 7$), we compute $\widetilde{c}_0(S_n), c_0(\mathcal{T}(S_n))$ and $c_0(\mathcal{O}(S_n))$ separately. We view $S_n$ as acting on $N=\{1,\dots,n\}$.

Since $[S_2]_0=\{ [(1~2)]\}$, we immediately have $\widetilde{c}_0(S_2)=c_0(\mathcal{T}(S_2))=c_0(\mathcal{O}(S_n))=1.$  Since $[S_3]_0=\{ [(1~2)], [(1~3)], [(2~3)],[(1~2~3)]\}$, we have $\mathfrak{T}_0(S_3)=\{[1,2], [3]\}$ and  $O_0(S_3)=\{2, 3\}$. Thus, by Lemma \ref{lem:3}, we get $\widetilde{c}_0(S_3)=4$ and $c_0(\mathcal{T}(S_3))=c_0(\mathcal{O}(S_n))=2.$

Let $n=4$. We start considering the type $T_1=[4]$ and the cycle $\psi=(1~2~3~4) \in S_4$. By Corollary \ref{edge} and Lemma \ref{prospli}, the only vertex distinct from $[\psi]$ adjacent to $[\psi]$ is $\varphi=[(1~3)(2 ~4)]$ and no other vertex can be adjacent to $[\psi]$ or $[\varphi ]$. Thus the component  $C_1$ of $\widetilde{\mathcal{P}}_0(S_4)$ having as  a vertex  $[\psi]$ is a path of length one, $k_{C_1}(T_1)=1$ and  $\widetilde{c}_0(S_4)_{T_1}=\frac{\mu_{[4]}(S_4)}{\phi(4)}=3$. Note that $\mathfrak{T}(C_1)=\{[4], [2^2]\}.$
 By Lemma \ref{lem:3}, a vertex of type $T_2=[1, 3]$ is isolated and thus $\widetilde {c}_0(S_4)_{T_2}=\frac{\mu_{[1,3]}(S_4)}{\phi(3)}=4.$
Consider now  the type $T_3=[1^2, 2]$. $T_3$ is not a proper power and has no proper power. So, by Corollary \ref{edge}, a component admissible for $T_3$ is again reduced to a single vertex. Thus $\widetilde {c}_0(S_4)_{T_3}=\mu_{[1^2, 2]}(S_4)=6$. Since all the possible types in $S_4$ have been considered, the Procedure  \ref{procedureS_n} ends, giving $c_0(\mathcal{T}(S_4))=3$ and $\widetilde{c}_0(S_4)=3+4+6=13$. Since $O_0(S_4)=\{2,4,3\}$ we instead have $c_0(\mathcal{O}(S_4))=2.$

 Let $n=5$. By Lemma~\ref{lem:3}\,(ii), the vertices of type $T_1=[5]$ in $\widetilde{\mathcal{P}}_0(S_5)$ are in $3!=6$ components which are isolated vertices. Let $C_1$ be one of those components.
Consider for the type $T_2=[1,4]$,  the cycle $\psi=(1~2~3~4) \in S_5$ . By Corollary \ref{edge} and Lemma \ref{prospli}, the component $C_2$ containing $[\psi]$ admits as vertices just $[(1~2~3~4)]$ and $[(1~3)(2~4)].$ Thus $k_{C_2}(T_2)=1$ and  $\widetilde {c}_0(S_5)_{T_2}=\frac{\mu_{[1,4]}(S_5)}{\phi(4)}=15$. Moreover, $\mathfrak{T}(C_1)\cup\mathfrak{T}(C_2)=\{[5], [1,4], [1,2^2]\}.$ We next consider $T_3=[2,3]$ and $\psi=(1~2)(3~4~5)\in S_5$. By Lemma \ref{prospli}, $T_3$ is not a power and so there exists no $[\varphi]\in [S_5]_0$ such that $\varphi^s=\psi$. On the other hand, to get a power of $T_3=T_{\psi}$ different from $[1^5]$, we must consider $T_{\psi^a}$ where $\gcd(a, o(\psi))\neq 1$, that is, $\psi^a$ for $a\in \{2,3,4\}.$ Thus the component $C_3$ containing $[\psi]$ contains the path $[(1~2)],[(1~2)(3~4~5)],[(3~4~5)].$ We show that  $C_3$ is indeed that path.
 If there exists a proper edge $\{[(3~4~5)], [\varphi]\}$, then, by Corollary \ref{edge}, $T_{\varphi}\notin \mathfrak{T}(C_1)\cup\mathfrak{T}(C_2)\cup\{[1^2,3]\}$ and thus $T_{\varphi}\in \{[1^3, 2], [2,3]\}.$ But $[1^3, 2]$ and $[1^2,3]$ are not one the power of the other and thus, by Corollary \ref{edge}, we must have $T_{\varphi}=[2,3]$, say $\varphi=(a~b)(c~d~e)$ where $\{a, b, c, d, e\} =N$. So $\left[(a~b)(c~d~e)\right]^2=(c~e~d)$ is a generator of $\langle (3~4~5)\rangle$. It follows that $(a~b)=(1~2)$ and $(c~e~d)\in \{(3~4~5), (3~5~4)\}$. Thus  $\varphi\in \{\varphi_1=(1~2)(3~4~5), \varphi_2=(1~2)(3~5~4)\}.$ But it is immediately checked that
 $[\varphi_1]=[\varphi_2]=[\psi].$
 Similarly one can check that the only proper edge $\{[(1~2)], [\varphi]\}$ is given by the choice $\varphi=\psi.$
 So we have $\widetilde {c}_0(S_5)_{T_3}=\frac{\mu_{[2,3]}(S_5)}{\phi(6)}=10$ and,  since all the possible types in $S_4$ have been considered, we get $c_0(\mathcal{T}(S_5))=3$ and $\widetilde{c}_0(S_5)=31$. On the other hand there are only two components for $\mathcal{O}_0(S_5)$: one reduced to the vertex $5$ and the other one having as set of vertices $\{2,3,4,6\}.$

Let $n=6$. In $\widetilde{\mathcal{P}}_0(S_6)$, by Lemma~\ref{lem:3}\,(iii), the elements of type $T_1=[1, 5]$ are inside $36$ components which are isolated vertices. Let $C_1$ be one of them. Consider the type $T_2=[2,4]$ and $\psi=(1~2)(3~4~5~6)$. The component $C_2$ containing $[\psi]$ is the path
$$[(1~2)(3~4~5~6)],\  [(3~5)(4~6)],\ [(3~4~5~6)].$$

This is easily checked, by Corollary \ref{edge}, taking into account that the only proper power of $[2,4]$ is $[1^2,2^2]$ and that, by Lemma \ref{prospli}, no type admits $[2,4]$ as proper power. Moreover, $[1^2,2^2]$  admits no proper power  and is the proper power only of $[2,4]$ and $[1^2,4].$
It follows that $k_{C_2}(T_2)=1$ and  $\widetilde {c}_0(S_6)_{T_2}=\frac{\mu_{[2,4]}(S_6)}{\phi(4)}=45$.  Since $$\mathfrak{T}(C_1)\cup\mathfrak{T}(C_2)=\{[1,5], [2,4], [1^2,2^2], [1^2,4]\},$$ we  consider $T_3=[1^4,2]$ and $\psi=(1~2)\in S_6$.  By Lemma~\ref{lem:4}, all the vertices of type $T_3$ are in $C_3=\widetilde{\Delta}_6$ and, using Corollary \ref{two-types}\,(iii),  we see that $C_3$ contains also all the vertices of type $[1,2,3]$ and $[1^3,3]$.
But it is easily checked that no further type exists having as power one of the types $[1^4,2], [1,2,3], [1^3,3]$, so that $\mathfrak{T}(C_3)=\{[1^4,2], [1,2,3], [1^3,3]\}$. We claim that all the elements of type $T_4=[2^3]$ are in a same component $C_4,$ so that $\widetilde {c}_0(S_6)_{T_4}=1.$
Let  $[\varphi_1]$ and $[\varphi_2]$ be distinct elements in $[S_6]_0$ of type $[2^3]$. Note that, since $[\varphi_1], [\varphi_2]$ are distinct they share at most one transposition. Let $\varphi_1=(a~b)(c~d)(e~f)$, with $\{a, b, c, d, e, f\}=\{1, 2, 3, 4, 5, 6\}$. Since the $2$-cycles in which $\varphi_1$ splits commute and also the entries in each cycle commute, we can restrict our analysis to $\varphi_2=(a~b)(c~e)(d~f)$, if $\varphi_1, \varphi_2$ have one cycle in common, and to $\varphi_2=(a~c)(b~e)(d~f)$, if $\varphi_1, \varphi_2$ have no cycle in common. In the first case we have the following path of length $8$ between $[\varphi_1]$ and $[\varphi_2]$:
$$[\varphi_1], [(a~e~d~b~f~c)], [(a~d~f)(e~b~c)], [(e~a~b~d~c~f)], [(e~d)(a~c)(b~f)],$$
$$ [(d~a~b~e~c~f)], [(d~b~c)(a~e~f)], [(a~d~e~b~f~c)], [\varphi_2].$$
In the second case we have the following path of length $4$ between $[\varphi_1]$ and $[\varphi_2]$:
$$[\varphi_1], [(a~f~d~b~e~c)], [(a~d~e)(f~b~c)], [(f~a~b~d~c~e)], [\varphi_2].$$
Collecting the types met in those paths,  we see that $\mathfrak{T}(C_4)\supseteq\{[2^3], [6], [3^2]\}$ and since all the other possible types in $S_6$ have been considered we get that $\mathfrak{T}(C_4)=\{[2^3], [6], [3^2]\}$. Thus
our procedure ends giving  $c_0(\mathcal{T}(S_6))=4$ and $\widetilde{c}_0(S_6)=83$. Moreover $c_0(\mathcal{O}(S_6))=2$ with the two components of $\mathcal{O}_0(S_6)$ having as vertex sets $\{5\}$ and $\{2,3,4,6\}.$

Let $n=7.$ In $\widetilde{\mathcal{P}}_0(S_7)$, by Lemma~\ref{lem:3}, the elements of type $T_1=[7]$ are in $\widetilde {c}_0(S_7)_{T_1}=120$ components which are isolated vertices. Let $C_1$ be one of them.
By Lemma~\ref{lem:4} and Corollary \ref{two-types}\,(iii),  all the vertices of type $T_2=[1^5,2]$  and those of types $[1^2,2,3], [1^4,3]$
are in the same component $C_2=\widetilde{\Delta}_7.$  In particular $\widetilde {c}_0(S_7)_{T_2}=1.$ We show that also the types $[1^3, 4]$, $[1^3, 2^2]$, $[2^2, 3]$, $[1, 2, 4]$, $[2, 5]$ and $[1^2, 5]$ are admissible for  $C_2$.  From the path
$$ [(1~2~3~4)],\  [(1~3)(2~4)],\  [(1~3)(2~4)(5~6~7)],\  [(5~6~7)]$$
we deduce that $[1^3, 4]$, $[1^3, 2^2]$ and $[2^2, 3]$ are admissible for $C_2$, because $[(5~6~7)]$ is a vertex of  $C_2$. Then it is enough to consider the path
$ [(1~2)(3~4~5~6)], [(3~5)(4~6)]$ for getting the type $[1, 2, 4]$
and the path
$[(1~2~3~4~5)], [(1~2~3~4~5)(6~7)], [(6~7)]$ for getting the types $[2, 5]$ and $[1^2, 5]$.

We turn now our attention to the type $T_3=[1, 6]$ and to $\psi_0=(1~2~3~4~5~6)\in S_7.$ Let  $C_3$ be the component of $\widetilde{\mathcal{P}}_0(S_7)$, containing $[\psi_0].$ By Lemma \ref{prospli}\,(ii), $T_3$ is not a power and its only power are the types $[1, 2^3], [1, 3^2]$, which in turn admit no powers and are only the power of $T_3$.
It follows that $\mathfrak{T}(C_3)=\{[1, 2^3], [1, 3^2],[1, 6] \}.$  In particular, if $[\psi]\in V_{C_3}$, then  $\psi$ admits a unique fixed point. Moreover, by what shown for $[S_6]$, all the vertices in $[S_7]$ of type $T_3$ fixing $7$ are  contained in  $C_3$.  We show that, indeed, each $[\psi]\in V_{C_3}$ is such that $\psi(7)=7.$
By contradiction, assume $\psi(7)\neq 7$, for some $[\psi]\in V_{C_3}$. Then, since there is a path between $[\psi]$ and $[\psi_0]$, there exist $[\varphi], [\psi']\in V_{C_3}$ with $\varphi(7)=7$, $\psi'(j)=j $,  for some $j\neq 7$ and an edge $\{[\varphi], [\psi']\}\in [E]^*_{0}$. But either $\varphi$ is a power of $\psi'$ and so  $\varphi$ admits $j$ as a fixed point or  $\psi'$  is a power of $\varphi$ and so $\psi'$ admits $7$ as  a fixed point. In any case, we reach a contradiction. Thus we have $k_{C_3}(T_3)=\frac{\mu_{[6]}(S_6)}{\phi(6)}$ and so $\widetilde {c}_0(S_7)_{T_3}=\frac{\mu_{[1,6]}(S_7)}{\mu_{[6]}(S_6)}=7$. Since there are no other types left in $S_7$, we conclude that $c_0(\mathcal{T}(S_7))=3$ and $\widetilde{c}_0(S_7)=128$. Moreover $c_0(\mathcal{O}(S_7))=2$ with the two components of $\mathcal{O}_0(S_7)$ having as vertex sets $\{7\}$ and $\{2,3,4,5,6,10\}.$
\end{proof}

We are ready to show that, for  $n \geq 8,$ the main role is played by the component $\widetilde{\Delta}_n$ defined in Lemma \ref{lem:4}.

\begin{prop}\label{lem:8} For $n \geq 8$, all the vertices of $\widetilde{\mathcal{P}}_0(S_n)$ apart from those of prime order $p\geq n-1$ are contained in $\widetilde{\Delta}_n.$
\end{prop}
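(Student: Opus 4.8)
The plan is to transfer the whole statement to the partition level and settle it inside $P_0(\mathcal{T}(S_n))$. First I would identify the excluded vertices: a type whose order is a prime $p\ge n-1$ must have a part divisible by $p$, hence a part equal to $p$ (as $2p>n$), so it is $[n]$ (when $n\in P$) or $[1,n-1]$ (when $n\in P+1$), i.e.\ a $p$-cycle. Next comes the structural leverage. By Lemma \ref{lem:4}, $\widetilde{\Delta}_n$ contains every transposition, i.e.\ all vertices of type $[1^{n-2},2]$; hence Corollary \ref{two-types}\,(iii) shows $\widetilde{\Delta}_n$ contains \emph{all} vertices of every type in $\mathcal{T}(\widetilde{\Delta}_n)$. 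Moreover, as in the proof of Corollary \ref{comp-structure}, $\widetilde{t}(\widetilde{\Delta}_n)$ is a component of $P_0(\mathcal{T}(S_n))$ with vertex set $\mathcal{T}(\widetilde{\Delta}_n)$, and it contains $[1^{n-2},2]$. So the Proposition reduces to the purely combinatorial claim that every non-exceptional $T\in\mathcal{T}_0(n)$ is joined to $[1^{n-2},2]$ by a path in $P_0(\mathcal{T}(S_n))$.

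I would then build such paths with two elementary moves, both read off from the power formula \eqref{power} and Lemma \ref{trivial-power}. The first is \emph{powering down}: if $o(T)$ is composite, pick a prime $q\mid o(T)$; then $T^{o(T)/q}$ has order $q$ and is a proper power of $T$, giving an edge $\{T,T^{o(T)/q}\}$. A composite order cannot be divisible by any prime $\ge n-1$, for such a prime would force a single long part and make the order prime; thus this move lands on a \emph{non-exceptional} prime-order type $[1^{f},q^{j}]$, and it suffices to connect those. The second move \emph{attaches a cycle on fixed points}: when $T=[1^{f},q^{j}]$ has $f\ge 2$ (resp.\ $f\ge 3$), I replace two (resp.\ three) of its fixed points by a transposition when $q$ is odd (resp.\ a $3$-cycle when $q=2$), forming a type $\hat{T}$; one power of $\hat T$ returns $T$ while another equals the pure cycle type $[1^{n-2},2]$ (resp.\ $[1^{n-3},3]\in\mathcal{T}(\widetilde{\Delta}_n)$). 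This settles all odd-prime types with at least two fixed points and all involutions with at least three fixed points.

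The hard part will be the types with too few fixed points for the attach move: the near-support-filling involutions $[2^{i}]$, $[1,2^{i}]$, $[1^{2},2^{i}]$, and the homogeneous odd prime-order types $[q^{j}]$, $[1,q^{j}]$ ($q$ odd, $j\ge 2$) with at most one fixed point. For these I would use a \emph{bridging move}: introduce one longer part (of size $2q$, or of size $6$ when $q=2$) in place of the corresponding short parts, obtaining a type $U$ of which $T$ is a proper power. A different power of $U$ breaks the long part into short cycles and \emph{frees new fixed points}, producing either an involution with at least three fixed points or a type $[1^{\ast},3^{2}]$ with at least two fixed points — both already covered by the attach move. Counting the freed fixed points (they number $q(j-2)$, $1+q(j-2)$ or $2i-6$ in the various families) shows that the thresholds are met \emph{precisely} when $n\ge 8$; this is exactly where the degree hypothesis enters and why $n\le 7$ is treated separately in Theorem B\,(i).

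Finally I would assemble the pieces: every non-exceptional $T$ is joined to $[1^{n-2},2]$ by concatenating at most one powering-down step, one bridging step and one attach step, so $T\in\mathcal{T}(\widetilde{\Delta}_n)$; by the opening reduction this places every non-exceptional vertex of $[S_n]_0$ in $\widetilde{\Delta}_n$. The one subtlety to keep in mind is that an edge of $P_0(\mathcal{T}(S_n))$ need not lift to an edge of $\widetilde{P}_0(S_n)$ (Corollary \ref{edge}); this is precisely why I argue at the type level and exploit that $\mathcal{T}(\widetilde{\Delta}_n)$ is a \emph{component} of $P_0(\mathcal{T}(S_n))$, rather than attempting to lift the paths vertex by vertex.
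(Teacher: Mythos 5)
Your reduction to the type level is sound and genuinely different from the paper's argument: Lemma \ref{lem:4}, Corollary \ref{two-types}\,(iii) and Lemma \ref{tc} (equivalently \cite[Theorem A\,(i)]{bub} applied to the orbit homomorphism $\widetilde{t}$) do make $\mathcal{T}(\widetilde{\Delta}_n)$ the vertex set of the component of $[1^{n-2},2]$ in $P_0(\mathcal{T}(S_n))$, and do place \emph{all} vertices of every type in $\mathcal{T}(\widetilde{\Delta}_n)$ inside $\widetilde{\Delta}_n$; so it does suffice to join every non-exceptional type to $[1^{n-2},2]$ inside $P_0(\mathcal{T}(S_n))$. (The paper instead builds explicit paths of permutations inside $\widetilde{P}_0(S_n)$, splitting according to even order, odd prime order $p\leq n-2$, and order $tpq$.) Your powering-down and attach moves are also correct.

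The gap is in the bridging step, precisely in the claim that the freed fixed points meet the thresholds ``precisely when $n\geq 8$''. For the homogeneous odd-prime types with exactly two long parts, $T=[q^2]$ (so $n=2q$, $q\geq 5$ prime) and $T=[1,q^2]$ (so $n=2q+1$), your own counts $q(j-2)=0$ and $1+q(j-2)=1$ show the threshold is \emph{not} met although $n\geq 8$: for instance, for $T=[5^2]$, $n=10$, bridging gives $U=[10]$, whose only other proper power type is $[2^5]$, an involution with zero fixed points, which the attach move cannot handle. So, as written, the argument fails for infinitely many $n$ (all $n=2q$ or $n=2q+1$ with $q\geq5$ prime). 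The repair is cheap: the involution $[1^f,2^q]$ you land on has $q\geq 3$ transpositions and at most two fixed points, hence lies in your near-support-filling involution family, and a \emph{second} bridging step takes it to $[1^{n-6},3^2]$, which has $n-6\geq2$ fixed points and is finished by the attach move. Thus the Proposition does follow from your moves, but only by chaining two bridging steps, contrary to your concluding claim of ``at most one powering-down step, one bridging step and one attach step''. This chaining is, incidentally, exactly what the paper's proof does at the permutation level: a product of two $p$-cycles is exhibited as the square of an element of even order, whose relevant power is a product of $p\geq3$ transpositions, and that is the case in which the two spare points (hence the hypothesis $n\geq 8$) are used.
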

\begin{proof} Let $n \geq 8$. We start showing that each
$[\psi] \in [S_n]_0$ having even order is a vertex of $\widetilde{\Delta}_n.$ Let
$o(\psi)=2k$, for  $k$ a positive integer.  Since $o(\psi^k)=2$,  $\psi^k$ is the product of $s\geq 1$ transpositions. If $s=1,$ then we  have $\psi^k=(a~b)$ for suitable $a,b\in N$ and, by Lemma \ref{lem:4},  the path $[\psi], [(a~b)]$  has its end vertex in $\widetilde{\Delta}_n$.  If $s=2,$ then
$\psi^k=(a~b)(c~d)$,  for suitable $a, b, c, d \in
N$. Since $n\geq 8$, there exist distinct
 $e, f, g \in N\setminus \{a, b, c, d\}$ and we have the path
 $$[\psi],[\psi^k], [(a~b)(c~d)(e~f~g)], [(e~f~g)], [(a~b)(e~f~g)], [(a~b)]$$
 with an end vertex belonging to $\widetilde{\Delta}_n$.
Finally if $s\geq 3$, we have $\psi^k=(a~b)(c~d)(e~f)\sigma$, for suitable
$a, b, c, d, e, f \in N$ and $\sigma\in S_n$, with $\sigma^2=id$. Let $\varphi=(a~c~e~b~d~f)\sigma$, so that $\varphi^3=\psi^k$. Since $n\geq 8$, then there exist distinct $g, h \in N\setminus\{a, b, c, d, e, f\}$ and we have the  path
$$[\psi], [\psi^k], [\varphi], [(a~e~d)(c~b~f)], [(a~e~d)(c~b~f)(g~h)], [(g~h)]$$
with an end vertex belonging to  $\widetilde{\Delta}_n$.

Next let  $o(\psi)=p$, where $p$ is an odd prime such that  $p\leq n-2$.  If $|M_{\psi}|\leq n-2,$ pick $a, b\in \{1, 2, \dots, n\}\setminus M_{\psi}$ and consider the path $[\psi], [\psi(a~b)], [(a~b)].$ If $|M_{\psi}|\geq n-1$, observe that, since $p\leq n-2$,
$\psi$ is the product of $s\geq 2$ cycles of length $p$, say $\psi=(a_1~a_2~\dots~a_p)(b_1~b_2~\dots~b_p)\sigma$, where $\sigma=id$ or $\sigma$ is the product of $s-2$ cycles of length $p$. Let $\varphi=(a_1~b_1~a_2~b_2~...~a_p~b_p){\sigma}^{\frac{(p+1)}{2}}$. Since $o(\varphi)$ is even, by what shown above,  we get $[\varphi] \in V_{\widetilde{\Delta}_n}$. Moreover we have  $\varphi^{2}=\psi$ and thus $[\psi]\in V_{\widetilde{\Delta}_n}$.

Finally let $o(\psi)=upq$, where $p, q\geq 3$ are distinct prime numbers and $u$ is
an odd positive integer.  Then $o(\psi^u)=pq$ and in the split of $\psi^u$ into disjoint cycles, there exists either a cycle of length $pq$ or two cycles of length $p$ and $q$.  In the first case $pq\leq n$ gives $p\leq n/q\leq n/3< n-2$. In the second case we have $p+q\leq n$, which gives $p\leq n-q< n-2.$
Thus  $o(\psi^{uq})=p< n-2$ and, by the previous case, we obtain $[\psi^{uq}]\in V_{\widetilde{\Delta}_n}$ so that also $[\psi]\in V_{\widetilde{\Delta}_n}$.
\end{proof}

\begin{proof}  [Proof of Theorem B\,(ii) and Theorem D]   Let $n\geq 8$ be fixed and recall that
 $c_0(S_n)=\widetilde {c}_0(S_n)$. First of all, observe that $P\cap O_0(S_n)=\{p\in P: p\leq n\}.$ Therefore we can reformulate Lemma \ref{lem:8} by saying that all the vertices in $[S_n]_0$ of order not belonging to the set $B(n)=P\cap \{n, n-1\}$ are in $\widetilde{\Delta}_n.$   In particular, $V_{\widetilde {o}(\widetilde{\Delta}_n)}\supseteq O_0(S_n)\setminus B(n).$
Let $\Sigma_n$ be the  unique component  of $\mathcal{O}_0(S_n)$
such that $V_{\Sigma_n}\supseteq V_{\widetilde {o}(\widetilde{\Delta}_n)}$.

If $n\notin P\cup (P+1)$, then $B(n)=\varnothing$ and thus all the vertices of $\widetilde{\mathcal{P}}_0(S_n)$  are in $\widetilde{\Delta}_n$. In this case $\widetilde{\mathcal{P}}_0(S_n)=\widetilde{\Delta}_n$ is connected and, by \cite[Proposition 3.2]{bub}, both its quotients $\mathcal{T}_0(S_n)$  and $\mathcal{O}_0(S_n)$ are connected.
Note that the completeness of  $\widetilde{t}$ and $\widetilde {o}$ implies  $\widetilde{t}( \widetilde{\Delta}_n )=\mathcal{T}_0(S_n)$ and $\widetilde {o}(\widetilde{\Delta}_n)=\mathcal{O}_0(S_n)=\Sigma_n.$

Next let $n\in P\cup (P+1)$, say $n=p$ or $n=p+1$ for some $p\in P,$ necessarily odd. Then $B(n)=\{p\}$.
We need to understand only the components of $\widetilde {\mathcal{P}}_0(S_n)$ and $\mathcal{T}_0(S_n)$
containing vertices of order $p$, and decide whether or not $p\in V_{\Sigma_n}$.
By Lemma~\ref{lem:3}, $p$ is isolated in $\mathcal{O}_0(S_p)$ and the unique type  $T$ of order $p$ is isolated in $\mathcal{T}_0(S_n).$  Moreover
each component of $\widetilde{\mathcal{P}}_0(S_n)$ admissible for $T$ is an isolated vertex and their number is known. The values for $c_0(S_n)$ as displayed in Table \ref{eqtable2} and the fact that $c_0(\mathcal{T}(S_n))=2$ immediately follows. Now note that  $\Sigma_n$ cannot reduce to the isolated vertex $p$, because $\Sigma_n$ contains at least the vertex $2\in O_0(S_n)\setminus \{p\}$. 
Thus $c_0(\mathcal{O}_0(S_n))=2$  and the two components of $\mathcal{O}_0(S_n)$ have as vertex sets $\{p\}$ and $V_{\Sigma_n}= O_0(S_n)\setminus \{p\}=V_{\widetilde {o}(\widetilde{\Delta}_n)}.$ Since we have shown  that $\widetilde{\Delta}_n$ is the only possible component of $\widetilde{\mathcal{P}}_0(S_n)$ not reduced to an isolated vertex and  $\widetilde {o}$ is complete, applying \cite[Proposition 5.2]{bub}  we get $\Sigma_n=\widetilde {o}(\widetilde{\Delta}_n).$

So far, for every $n\geq 8$, we have shown  that $\widetilde{\Delta}_n$ is the only possible component of $\widetilde{\mathcal{P}}_0(S_n)$ not reduced to an isolated vertex; $\widetilde{t}(\widetilde{\Delta}_n)$  is the only possible component of $\mathcal{T}_0(S_n)$ not reduced to an isolated vertex; $\widetilde {o}(\widetilde{\Delta}_n)$ is the only possible component of $\mathcal{O}_0(S_n)$ not reduced to an isolated vertex.
For $\mathcal{P}_0(S_n)$,  $\widetilde{\mathcal{P}}_0(S_n)$, $\mathcal{T}_0(S_n)$ and $\mathcal{O}_0(S_n)$, the main component shall respectively refer to the component $\Delta_n$ such that $\pi(\Delta_n)=\widetilde{\Delta}_n$ defined in Corollary \ref{comp-structure}, $\widetilde{\Delta}_n$,  $\widetilde{t}(\widetilde{\Delta}_n)$ and
$\widetilde {o}(\widetilde{\Delta}_n)$.
Then, by Corollary \ref{comp-structure}, no main component is complete.
Finally we show that every component $C$ of $\mathcal{P}_0(S_n)$, with $C\neq \Delta_n$ is a complete graph on $p-1$ vertices. Let $\psi\in V_C$. Then $[\psi]\notin V_{\widetilde{\Delta}_n}$ and thus $[\psi]$ is isolated in $\widetilde{\mathcal{P}}_0(S_n)$. Hence, to conclude, we  invoke Lemma \ref{isolatedSn}\,(iii).

\end{proof}

\begin{cor}\label{isoclass} The following are equivalent:
\begin{itemize}
\item[(i)] every component $C$ of $\widetilde{\mathcal{P}}_0(S_n)$ is isomorphic to the component of $\mathcal{T}_0(S_n)$ induced on $\mathfrak{T}(C)$;
\item[(ii)] $2\leq n\leq 5.$
\end{itemize}
\end{cor}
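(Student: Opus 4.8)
The plan is to first reinterpret condition (i) and then treat the two implications separately, reading the needed multiplicity data off the proof of Theorem B\,(i) and the structural results of Section \ref{fusion case}.

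\emph{Reformulating (i).} Since $S_n$ is fusion controlled, $\widetilde{t}$ is a complete orbit $2$-homomorphism (Proposition \ref{conj}), hence locally surjective. Thus, for every component $C$ of $\widetilde{P}_0(S_n)$, \cite[Theorem A\,(i)]{bub} guarantees that $\widetilde{t}(C)$ is a component of $P_0(\mathcal{T}(S_n))$ with $V_{\widetilde{t}(C)}=\mathcal{T}(C)$. Consequently the component of $P_0(\mathcal{T}(S_n))$ induced by $\mathcal{T}(C)$ is exactly $\widetilde{t}(C)$, and (i) is equivalent to the assertion that $C\cong\widetilde{t}(C)$ for every component $C$ of $\widetilde{P}_0(S_n)$. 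I would then handle the two directions through the criteria of Corollary \ref{two-types}: part (ii) to produce the isomorphisms and part (iv) to obstruct them.

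\emph{(ii) $\Rightarrow$ (i).} For $2\leq n\leq 5$ I would invoke the explicit determination of the components of $\widetilde{P}_0(S_n)$ carried out in the proof of Theorem B\,(i). In each of these degrees every component is either an isolated vertex or a short path on which every admissible type occurs exactly once; that is, $k_C(T)=1$ for all $T\in\mathcal{T}(C)$. By Corollary \ref{two-types}\,(i) the ratio $k_{\widetilde{P}_0(S_n)}(T)/k_C(T)$ is constant on $\mathcal{T}(C)$, so $k_{\widetilde{P}_0(S_n)}(T)$ is itself constant on $\mathcal{T}(C)$; combined with $k_C(T)=1$ this is precisely the hypothesis of Corollary \ref{two-types}\,(ii), which yields $C\cong\widetilde{t}(C)$. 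Hence (i) holds for $2\leq n\leq5$.

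\emph{(i) $\Rightarrow$ (ii), by contraposition.} Assume $n\geq 6$ and consider the component $\widetilde{\Delta}_n$ of Lemma \ref{lem:4}. That lemma places all transpositions of $\widetilde{P}_0(S_n)$ in $\widetilde{\Delta}_n$, so for $T=[1^{n-2},2]$ we have $k_{\widetilde{\Delta}_n}(T)=k_{\widetilde{P}_0(S_n)}(T)$; by Lemma \ref{lem:2} and \eqref{count} this common value equals $\mu_{T}(S_n)/\phi(2)=\binom{n}{2}>1$. Corollary \ref{two-types}\,(iv) then gives $\widetilde{\Delta}_n\not\cong\widetilde{t}(\widetilde{\Delta}_n)$, so (i) fails, completing the equivalence. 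I expect no substantial obstacle beyond this bookkeeping: the threshold $n=6$ is exactly the degree at which Lemma \ref{lem:4} forces every transposition into the single component $\widetilde{\Delta}_n$, creating a type of multiplicity $\binom{n}{2}>1$ in one component, whereas for $n\leq 5$ the transpositions are distributed among distinct components and all type-multiplicities stay equal to $1$. The only point requiring care is to confirm, from the small-degree analysis, that \emph{every} component (not merely $\widetilde{\Delta}_n$) has unit type-multiplicities.
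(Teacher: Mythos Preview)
Your proposal is correct and follows essentially the same route as the paper: for $n\geq 6$ you use Lemma \ref{lem:4} together with Corollary \ref{two-types}\,(iv) applied to $T=[1^{n-2},2]$, exactly as the paper does; for $2\leq n\leq 5$ the paper simply points to the explicit components found in the proof of Theorem B\,(i), while you additionally package the observation $k_C(T)=1$ through Corollary \ref{two-types}\,(ii), which is a harmless and slightly more systematic variant of the same argument.
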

\begin{proof} (ii)$\Rightarrow $(i) For $2\leq n\leq 5$, the case-by-case proof of Theorem B\,(i) shows directly the required isomorphism.

(i)$\Rightarrow $(ii) For $n\geq 6,$ we show that the component $\widetilde{\Delta}_n$ is not isomorphic to the component of $\mathcal{T}_0(S_n)$ induced on $\mathfrak{T}(\widetilde{\Delta}_n)$. Namely $\widetilde{\Delta}_n$ contains all the vertices of type $T=[1^{n-2},2]$ and since $k_{\widetilde{\mathcal{P}}_0(S_n)}(T)=\frac{n(n-1)}{2}>1,$ Corollary \ref{two-types}\,(iv) applies.
\end{proof}

\begin{proof}  [Proof of Corollary C] A check on Tables \ref{eqtable1} and \ref{eqtable2} of Theorem B.
 \end{proof}
 \begin{cor} \label{final} Apart from the trivial case $n=2$, the minimum $n\in\mathbb{N}$ such that $\mathcal{P}(S_n)$ is $2$-connected is $n=9$. There exists infinitely many $n\in\mathbb{N}$ such that $\mathcal{P}(S_n)$ is $2$-connected.
 \end{cor}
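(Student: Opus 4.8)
The plan is to read off both assertions directly from Theorem~B and Corollary~C, which we may now treat as established. By Corollary~C, $P(S_n)$ is $2$-connected if and only if $n=2$ or $n\in\mathbb{N}\setminus[P\cup(P+1)]$, i.e.\ $n$ is neither a prime nor one more than a prime. So the entire statement reduces to an elementary arithmetic analysis of the set $P\cup(P+1)$, and no graph theory remains to be done.

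For the first claim, I would simply tabulate the small cases. Apart from $n=2$, the next candidate values $n=3,4,5,6,7,8$ must each be checked against membership in $P\cup(P+1)$: one finds $3\in P$, $4=3+1\in P+1$, $5\in P$, $6=5+1\in P+1$, $7\in P$, and $8=7+1\in P+1$, so none of these works. Then $n=9$ satisfies $9\notin P$ (it is composite) and $9\notin P+1$ (since $8=9-1$ is not prime). Hence $9$ is the smallest integer exceeding $2$ for which $P(S_n)$ is $2$-connected, giving the first assertion.

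For the second claim, I would exhibit an infinite family of integers lying outside $P\cup(P+1)$. The cleanest choice is to take $n$ of a fixed composite shape whose predecessor is also composite; for instance any $n$ divisible by a small square, or more simply any $n$ in a suitable residue class. One convenient route is to note that among any sufficiently long run of consecutive integers there are composites whose predecessor is also composite: since prime gaps are unbounded, there are infinitely many primes $p$ with $p+2$ composite, and then $n=p+2$ is composite-or-prime-avoiding only if we argue more carefully. To avoid any appeal to gap theorems, I would instead pin down an explicit infinite set, e.g.\ $n=p^2$ for $p$ an odd prime: here $p^2$ is composite so $p^2\notin P$, and $p^2-1=(p-1)(p+1)$ is composite (as $p\geq 3$ forces $p-1\geq 2$ and $p+1\geq 4$), so $p^2\notin P+1$. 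Thus every $n=p^2$ with $p$ an odd prime lies in $\mathbb{N}\setminus[P\cup(P+1)]$, producing infinitely many degrees with $P(S_n)$ $2$-connected.

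The main (and only) obstacle is purely bookkeeping: ensuring that the chosen infinite family genuinely avoids both $P$ and $P+1$ simultaneously, which is why I favor the explicit $n=p^2$ construction over any argument invoking the distribution of primes. Everything else is a direct consequence of Corollary~C, and the verification of minimality for $n=9$ is a finite check already implicit in Tables~\ref{eqtable1} and~\ref{eqtable2}.
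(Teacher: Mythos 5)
Your proposal is correct and takes essentially the same approach as the paper: the paper also settles minimality by a finite check of $3\leq n\leq 8$ against Theorem B (reading $c_0(S_n)>1$ off Tables \ref{eqtable1} and \ref{eqtable2}) and obtains infinitude from squares, taking $n=k^2$ for any $k\geq 3$ and factoring $n-1=(k-1)(k+1)$, of which your family $n=p^2$ with $p$ an odd prime is a special case.
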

 \begin{proof}  Let $n=k^2$, for  some $k\geq 3.$ Then $n\geq 8, n\notin P$ and $n-1=k^2-1=(k-1)(k+1)$ is not a prime. Thus, by Theorem B, $\mathcal{P}(S_n)$ is $2$-connected. In particular $c_0(S_9)=1$. Moreover by Tables \ref{eqtable1} and \ref{eqtable2}, we have $c_0(S_n)>1$ for all $3\leq n\leq 8.$
 \end{proof}

{\bf Acknowledgements}  The authors wish to thank Gena Hahn and Silvio Dolfi for some suggestions on a preliminary version of the paper.
The first author is partially supported by GNSAGA of INdAM.

\bigskip
\bigskip
{\footnotesize \pn{\bf Daniela~Bubboloni}\; \\  {Dipartimento di Matematica e Informatica U.Dini},\\
{ Viale Morgagni 67/a}, {50134 Firenze, Italy}\\
{\tt Email:  daniela.bubboloni@unifi.it}\\

{\footnotesize \pn{\bf Mohammad~A.~Iranmanesh}\; \\ {Department of
Mathematics},\\ {Yazd University,  89195-741,} { Yazd, Iran}\\
{\tt Email: iranmanesh@yazd.ac.ir}\\

{\footnotesize \pn{\bf Seyed~M.~Shaker}\; \\  {Department of
Mathematics}, \\ {Yazd University,  89195-741,} { Yazd, Iran}\\
{\tt Email: seyed$_{-}$shaker@yahoo.com}\\

\end{document}